\newtheorem{theorem}{Theorem}[section]
\newtheorem{corollary}[theorem]{Corollary}
\newtheorem{lemma}[theorem]{Lemma}
\newtheorem{proposition}[theorem]{Proposition}
\theoremstyle{definition}
\newtheorem{definition}[theorem]{Definition}
\newtheorem{remark}[theorem]{Remark}
\newtheorem{example}[theorem]{Example}
\numberwithin{equation}{section}
\newcommand{\R}{\mathbb{R}}
\newcommand{\N}{\mathbb{N}}
\newcommand{\eps}{\varepsilon}
\newcommand{\ab}{[a,b]}
\newcommand{\OT}{\,[0,T]}
\newcommand{\wt}{\widetilde}
\newcommand{\var}{\mbox{var}}
\newcommand{\uu}{\accent23 u}
\begin{document}

\title{Bifurcation of periodic solutions to nonlinear measure differential
equations.
}

\author{C.~Mesquita\footnote{Department of Mathematics, Federal University
of S\~ao Carlos, Caixa Postal ~676, 13565.905 S\~ao Carlos SP, Brazil.
E-mail: {\tt mc12stefani@hotmail.com}},
M.~Tvrd\'{y}
\thanks{Institute of Mathematics, Czech Academy of Sciences, Prague.
E-mail: {\tt tvrdy@math.cas.cz}}}

\date{\today}
\maketitle

\begin{abstract}
This paper is devoted to bifurcations of periodic solutions of nonlinear measure
differential equations with a parameter. Main tools are nonlinear generalized
differential equations (in the sense of Kurzweil) and the Kurzweil gauge type
generalized integral. We continue the research started in \cite{carol} and
\cite{JC}.
\end{abstract}

\section{Introduction}

The concept of measure differential equations arose more or less together
with the concepts of impulse systems or distributional differential equations.
They generally try to describe some physical or biological problems, such as
heartbeat, blood flow, pulse/frequency modulated systems, and/or models for
biological neural networks. In these models, derivatives are understood in the
sense of distributions and the solutions are generally discontinuous, but not
too bad from another point of view, i.e. they are usually regulated or have
bounded variation. Early results were summarized e.g. in monographs \cite{PaDe},
\cite{SaPe}, \cite{BaSi} and references therein.

The motivation for studying such problems was also some models created in
control theory, in which it turned out that measures can be more suitable
controls, cf. e.g. \cite{MiRu}. Moreover, differential equations with measure
also appear in non-smooth mechanics, cf. \cite{Br}. More recent references
are e.g. \cite{CaSu}, \cite{CaSu2}, \cite{Sa}, \cite{RaTo} and many others.

In this article we consider the measure differential system
\begin{equation}\tag{1.1}
   Dx=f(\lambda,x,t)+g(x,t).D h,
\end{equation}
where $D$ stands for the distributional derivatives and $\lambda$ is
a parameter. The assumptions about the functions $f,\,g,$ measure $D h$ as well
as the exact definition of the solution (in general, these will be vector-valued
regulated functions) will be given later. We are particularly interested
in bifurcations with respect to a given periodic solution. To this end, an
important tool are generalized ordinary differential equations (we write simply
generalized ODEs). These equations were introduced in the middle of the 1950s
by Kurzweil in \cite{Ku57, Ku58}. Since then, many authors have dealt with
the potentialities of this theory (see e.g. \cite{BFM, Kurz, Schwabik, MST} and
references therein). In \cite{JC} the authors introduced the concept of
bifurcation point with respect to the trivial solution of the periodic problem
for the generalized ODE
\begin{equation}\tag{1.2}
   \dfrac{dx}{d\tau}=DF(\lambda,x,t),
\end{equation}
where $T>0,$ $F:\Lambda\times\Omega\times\OT\rightarrow\R^n,$ $\Lambda\subset\R$
and $\Omega\subset\R^n.$  By means of the coincidence degree theory, they
established conditions sufficient for the existence of such a~bifurcation point,
cf. \cite[Theorem 5.6]{JC}. Similar questions have been already studied in the
thesis \cite{carol}.

In particular, we will show that, under reasonable assumptions, our measure
differential system \thetag{1.1} becomes a special case of equations of the
form \thetag{1.2}. Thus, for the periodic problem for \thetag{1.1} we obtain
the existence of its bifurcation point as a direct corollary of the analogous
result from \cite{JC}. Furthermore, we will present conditions necessary
for the existence of the bifurcation point of the periodic problem for \thetag{1.2}
and apply this result to \thetag{1.1}.

\section{Preliminaries (Kurzweil integral and generalized ODEs)}\label{S2}

One of our main tools are the Kurzweil integral and its special case,
Kurzweil-Stieltjes integral. This kind of integral has been introduced
by Kurzweil in the middle of the fifties, cf. \cite{Ku57, Ku58}. In this
section, we summarize some of its basic concepts needed later.

Throughout the paper, the symbol $X$ stands for a Banach space equipped
with the norm $\|\cdot\|_X.$ Usually we restrict ourselves to the cases
$X=R^n$ or $X=\mathcal{L}(\R^n),$ where $\mathcal{L}(\R^n)$ is the space
of real $n\times n$-matrices equipped with the norm
\[
   \|A\|_{n\times n}=\max_{i\in\{1,\dots n\}}\sum_{j=1}^n |a_{i,j}|
   \quad\mbox{for \ } A=(a_{i,j})_{i,j\in\{1,\dots n\}}\in\mathcal{L}(\R^n).
\]
and and $\R^n$ is the space of real $n\times 1$-matrices equipped
with the norm
\[
   \|x\|_n=\sum_{j=1}^n |x_i|
   \quad\mbox{for \ } x=(x_i)_{i\in\{1,\dots n\}}\in\R^n.
\]

The function $x:\ab\to X$ is {\em regulated}, if the lateral limits
\[
   x(t-)=\lim_{\tau\to t-}x(\tau) \quad\mbox{and}\quad
   x(s+)=\lim_{\tau\to s+}x(\tau)
\]
exist for all $t\in(a,b]$ and $s\in [a,b).$ The space of functions
$x:\ab\to X$ which are regulated on $\ab$ will be denoted as $G(\ab;X).$
As usual, $\Delta^+x(t)=x(t+)-x(t)$ and $\Delta^-x(t)=x(t)-x(t-)$
whenever the expressions on the right sides have a sense.
It is well known that, when equipped with the supremal norm
$\|x\|_{\infty}=\sup_{t\in\ab}\|x(t)\|_n,$ $G(\ab;X)$ is a~Banach space
(see e.g. \cite{Hoenig}). As usual, $BV(\ab;X)$ stand for
the space of functions $x:\ab\to X$ having a bounded variation on
$\ab$ and $\var_a^b f$ is the variation of the function $f$ over $\ab.$
If $X=\R^n,$ we write simply $G\ab$ and $BV\ab$ instead of $G(\ab;X)$
and $BV(\ab;X),$ respectively.

In this paper, by an integral we mean the integral introduced by J.~Kurzweil
in \cite{Ku57}. Its definition relies on the notions of gauges and tagged
partitions fine with respect to the gauges:

Let $\ab$ be a~bounded closed interval. Finite collections of point-interval
pairs $P\,{=}\,(\tau_j,[t_{j-1},t_j])_{j=1}^{\nu(P)}$ such that
$a=t_0\le t_1\le\cdots\le t_{\nu(P)}=b$ and $\tau_j\in[t_{j-1},t_j]$ for
$j\in\{1,\dots,n\}$ are called {\em tagged partitions} of $\ab.$ Furthermore,
any positive function $\delta:\ab\to(0,\infty)$ is called a~\emph{gauge}
on $\ab.$ Given a~gauge $\delta$ on $\ab$, the partition
$P=(\tau_j,[t_{j-1},t_j])_{j=1}^{\nu(P)}$ is called $\delta$-{\em fine} if
\[
 [\alpha_{j{-}1},\alpha_j]
 \subset(\tau_j\,{-}\,\delta(\tau_j),\tau_j\,{+}\,\delta(\tau_j))
 \quad\mbox{for all \ } j\in\{1,2,\dots,\nu(P)\}.
\]
Recall, that by Cousin Lemma \cite{cou} (see also
e.g.~\cite[Lemma 1.4]{Schwabik} or \cite[Lemma 6.2.3]{MST}) there always
exists a~$\delta$-fine tagged partition of $\ab$ for any $\delta$ on $\ab.$

\begin{definition}
Let $-\infty<a<b<\infty$ and let $X$ be a Banach space. Then the function
$U:\ab\times\ab\to X$ is said to be {\em Kurzweil integrable} on $\ab$ if
there is an $I\in X$ such that for every $\eps>0$ we can find a~gauge
$\delta$ on $\ab$ such that
\[
 \left\|\sum_{j=1}^{\nu(P)}[U(\tau_j,t_j)-U(\tau_j,t_{j-1})]-I\right\|_X
 <\eps.
\]
holds for every $\delta$-fine tagged partition
$P=(\tau_j,[t_{j-1},t_j])_{j=1}^{\nu(P)}$ of $\ab.$

In such a~case, $I$ is said to be the Kurzweil integral of $U$ over $\ab$
and we write
\[
   I=\int_a^b DU(\tau,t).
\]
If the integral $\int_a^b DU(\tau,t)$ has a~sense, we put
\[
    \int_b^a DU(\tau,t)=-\int_a^b DU(\tau,t).
\]
Furthermore,
\[
  \int_a^b DU(\tau,t)=0 \quad\mbox{if \ } a=b.
\]
\end{definition}

\begin{remark}
If $U(\tau,t)\,{=}\,G(\tau)\,H(t),$ where
$G\,{:}\,\ab\,{\to}\,\mathcal{L}(\R^n)$ and $H\,{:}\,\ab\,{\to}\,\R^n$ then
the integral $\int_a^b DU(\tau,t)$ reduces to the Kurz\-weil-Stieltjes
integral $\int_a^b G\,d H.$ Similarly, if $U(\tau,t)=H(t)\,G(\tau),$ where
$H:\ab\to\mathcal{L}(\R^n)$ and $G:\ab\to\R^n,$ then
\[
   \int_a^b DU(\tau,t)=\int_a^b d H\,G.
\]
Both these cases were considered in details in \cite{MST}. Finally,
if $H(t)\equiv t,$ the integral is known as the Henstock-Kurzweil
integral.
\end{remark}

The first part of the following assertion follows from
\cite[Corollary 14.18]{Kurz}. The second one follows directly from
the definition of the Kurzweil integral.

\begin{lemma}\label{Ant}
Let $U:\ab\times\ab\to X$ be Kurzweil integrable and regulated in
the second variable on $\ab$ and
\[
   v(s)=\int_a^s DU(\tau,t) \quad\mbox{for\ } s\in\ab.
\]
Then $v$ is regulated on $\ab,$
\[
  \Delta^-v(t)=U(t,t)-U(t,t-) \mbox{\ if\ } t\in[a,b)
  \mbox{\ and\ }
  \Delta^+v(t)=U(t,t+)-U(t,t) \mbox{\ if\ } t\in(a,b].
\]
Moreover, if there are functions $f\,{:}\,\ab\,{\to}\,\R$
regulated on $\ab$ and $g\,{:}\,\ab\,{\to}\,\R$ nondecreasing on $\ab$
and such that
\[
  \|U(\tau,t)-U(\tau,s)\|_X\le |f(\tau)|\,|g(t)-g(s)|
  \quad\mbox{for all \ } t,s,\tau\in\ab,
\]
then
\[
   \left\|\int_0^s DU(\tau,t)\right\|_X\le\int_0^s |f(\tau)|\,d g(\tau)
   \quad\mbox{for all \ } s\in\ab.
\]
\end{lemma}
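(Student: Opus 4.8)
The plan is to prove this lemma in two separate parts, corresponding to the two assertions, since the excerpt explicitly tells us the first follows from \cite[Corollary 14.18]{Kurz} and the second from the definition of the Kurzweil integral directly.

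For the first part, establishing that $v$ is regulated together with the jump formulas, I would invoke the cited corollary to conclude that $v$ is regulated on $\ab$, and then derive the jump formulas from the definition by a direct gauge argument. Concretely, to compute $\Delta^-v(t)=v(t)-v(t-)$ for a fixed $t\in[a,b)$, I would write $v(t)-v(s)=\int_s^t DU(\tau,r)$ for $s<t$ and study the limit as $s\to t-$. The key observation is that for $s$ sufficiently close to $t$, a $\delta$-fine tagged partition of $[s,t]$ can be forced, by shrinking the gauge near $t$, to have $t$ itself as the tag of the last subinterval $[t_{\nu-1},t]$; the contribution of that subinterval is then approximately $U(t,t)-U(t,t_{\nu-1})$, while the remaining subintervals contribute a term that vanishes as $s\to t-$ because $U$ is regulated in the second variable. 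Passing to the limit yields $\Delta^-v(t)=U(t,t)-U(t,t-)$, and the symmetric argument on $[t,s]$ with $s\to t+$ gives $\Delta^+v(t)=U(t,t+)-U(t,t)$.

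For the second part, the estimate on $\|v(s)\|_X$, I would argue directly from the integral definition. Fix $s\in\ab$ and $\eps>0$, and choose a gauge $\delta$ witnessing Kurzweil integrability of $U$ on $[a,s]$ (up to error $\eps$) and a $\delta$-fine tagged partition $P=(\tau_j,[t_{j-1},t_j])$. By the triangle inequality, $\|v(s)\|_X\le\eps+\sum_j\|U(\tau_j,t_j)-U(\tau_j,t_{j-1})\|_X$, and the hypothesized Lipschitz-type bound gives $\|U(\tau_j,t_j)-U(\tau_j,t_{j-1})\|_X\le|f(\tau_j)|\,(g(t_j)-g(t_{j-1}))$, using monotonicity of $g$ to drop the absolute value. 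The sum on the right is a Riemann–Stieltjes-type sum for $\int_a^s|f(\tau)|\,dg(\tau)$; since $|f|$ is regulated and $g$ nondecreasing, that integral exists (as a Kurzweil–Stieltjes integral), and for $\delta$ fine enough the sum approximates it within $\eps$. Letting $\eps\to0$ yields the claimed bound.

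The main obstacle I anticipate is the bookkeeping in the first part: forcing the endpoint $t$ to appear as a tag in $\delta$-fine partitions of the shrinking intervals $[s,t]$ and controlling the error from the ``bulk'' subintervals uniformly as $s\to t-$. This requires a careful gauge construction (making $\delta$ small away from $t$ and exploiting one-sided regularity of $U(\tau,\cdot)$) and is exactly the content packaged into \cite[Corollary 14.18]{Kurz}; if I lean on that citation, the jump formulas become almost immediate, and the only genuinely new work is assembling the Stieltjes estimate in the second part, which is routine once the existence of $\int_a^s|f|\,dg$ is granted.
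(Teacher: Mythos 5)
Your proposal is correct and takes essentially the same route as the paper, whose entire ``proof'' consists of citing \cite[Corollary 14.18]{Kurz} for the regulatedness of $v$ and the jump formulas and remarking that the Stieltjes bound follows directly from the definition of the Kurzweil integral; your partition/Riemann--Stieltjes-sum argument for the second part is a correct fleshing-out of that remark. One minor caveat: in your sketched stand-alone derivation of the jump formulas, the ``bulk'' term does not vanish merely because $U(\tau,\cdot)$ is regulated (that step really needs a Saks--Henstock type estimate, most cleanly applied to the single tagged pair $(t,[s,t])$, which yields the existence of the one-sided limits of $v$ and the jump formulas simultaneously), but since you ultimately lean on the citation, exactly as the paper does, this does not affect the proposal.
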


\medskip

Now, we will recall the concept of a~solution to the generalized ODE
\begin{equation}\label{EDOG}
   \dfrac{dx}{d\tau}=DF(x,t).
\end{equation}

\begin{definition}\label{solucao}
Let $\Omega\subset X$ be open and let $F:\Omega\times\ab\to X.$
Then the function $x:\ab\to X$ is said to be a~{\em solution}
of the {\em generalized ODE} \eqref{EDOG} on $\ab$ whenever
\[
   x(s)\in\Omega\mbox{ \ and \ }
   x(s)=x(a)+\int_a^s DF(x(\tau),t) \quad\mbox{for all \ } s\in\ab.
\]
\end{definition}

A~proper class of right-hand sides of equation \eqref{EDOG} is given
by the following definition.

\begin{definition}\label{ClassF}
Let $h:\ab\to\R$ be nondecreasing on $\ab,$ let $\omega:[0,\infty)\to\R$
be increasing and continuous on $[0,\infty)$ with $\omega(0)=0$ and let
$\Omega\subset X$ be open. Then ${\mathcal F}(\Omega\times\ab,h,\omega;X)$
is the set of all functions $F:\Omega\times\ab\to X$ fulfilling the
relations
\begin{align}\label{Fa}
   &\|F(x,t_2)-F(x,t_1)\|_X\le|h(t_2)-h(t_1)|
\\\noalign{\noindent\mbox{and}}\label{Fa1}
   &\|F(x,t_2)\,{-}\,F(x,t_1)\,{-}\,F(y,t_2)\,{+}\,F(y,t_1)\|_X
   \,{\le}\,\omega(\|x{-}y\|_X)|h(t_2){-}h(t_1)|
\\[1mm]&\quad\nonumber
   \mbox{\ for all\ } x,y\in\Omega \mbox{\ and\ } t_1,t_2\in\ab.
\end{align}

If $X=\R^n,$ we write ${\mathcal F}(\Omega\times\ab,h,\omega)$
instead of ${\mathcal F}(\Omega\times\ab,h,\omega;\R^n).$
\end{definition}

\medskip
Next result is a slightly modificated version of \cite[Lemma 5]{JA}.
In the proof, one have to take into mind that a composition of a continuous
function with a regulated one is always regulated.

\begin{lemma}\label{JA}
Let $F\in{\mathcal F}(\Omega\times\ab,h,\omega),$ where $h:\ab\to\R$ is
nondecreasing on $\ab,$ $\omega:[0,\infty)\to\R$ is increasing and
continuous on $[0,\infty),$ $\omega(0)=0$ and $\Omega\subset\R^n$ is open.
Then
\[
    \left\|\int_{s_1}^{s_2} D[F(x(\tau),t)-F(y(\tau),t)]\right\|_n
    \le\int_{s_1}^{s_2}\omega(\|x(t)-y(t)\|_n)\,d h(t)
\]
for all $[s_1,s_2]\subset\ab$ and $x,y\in G\ab$ such that
$x(t)\in\Omega$ and $y(t)\in\Omega$ for all $t\in [a,b].$
\end{lemma}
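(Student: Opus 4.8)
The plan is to reduce the assertion to the second (``moreover'') part of Lemma~\ref{Ant}, choosing the kernel, the majorant, and the monotone function appropriately. Fix $[s_1,s_2]\subset\ab$ and set, for $\tau,t\in[s_1,s_2]$,
\[
   U(\tau,t)=F(x(\tau),t)-F(y(\tau),t),\qquad
   f(\tau)=\omega(\|x(\tau)-y(\tau)\|_n),
\]
and take $g=h$. With this choice $DU(\tau,t)=D[F(x(\tau),t)-F(y(\tau),t)]$ is exactly the integrand appearing on the left-hand side of the claim, while the right-hand side is $\int_{s_1}^{s_2}|f(\tau)|\,dh(\tau)$; note $f\ge0$ since $\omega$ is increasing with $\omega(0)=0$, so $|f|=f$.

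First I would verify the pointwise estimate demanded by Lemma~\ref{Ant}. For $t,s,\tau\in[s_1,s_2]$ one has
\[
   U(\tau,t)-U(\tau,s)
   =F(x(\tau),t)-F(x(\tau),s)-F(y(\tau),t)+F(y(\tau),s),
\]
so property \eqref{Fa1} of the class $\mathcal F$, applied with the points $x(\tau),y(\tau)\in\Omega$ and $t_2=t,\,t_1=s$, yields
\[
   \|U(\tau,t)-U(\tau,s)\|_n
   \le\omega(\|x(\tau)-y(\tau)\|_n)\,|h(t)-h(s)|
   =|f(\tau)|\,|h(t)-h(s)|.
\]
This is precisely the hypothesis of the ``moreover'' part, with $g=h$ nondecreasing on $\ab$.

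It then remains to check the standing regularity hypotheses of Lemma~\ref{Ant}: that $U$ is Kurzweil integrable and regulated in its second variable, and that the majorant $f$ is regulated. Integrability of $D[F(x(\tau),t)-F(y(\tau),t)]$ for $F\in{\mathcal F}(\Omega\times\ab,h,\omega)$ and $x,y\in G\ab$ with values in $\Omega$ is the standard existence result underlying Definition~\ref{solucao} (and is what makes the integral in the statement meaningful), while regularity in the second variable is immediate from \eqref{Fa}. The regularity of $f$ is exactly where the remark preceding the lemma is used: since $x,y\in G\ab$, the map $\tau\mapsto\|x(\tau)-y(\tau)\|_n$ is regulated, and its composition with the continuous function $\omega$ remains regulated. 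Once these are in place, a direct application of Lemma~\ref{Ant} on $[s_1,s_2]$ gives the claimed inequality.

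The only genuinely non-routine points are confirming the integrability prerequisite of Lemma~\ref{Ant} (namely that $U$ is indeed Kurzweil integrable) and establishing the regularity of the composite majorant $f$; the latter is precisely the subtlety the authors flag with their composition remark. Everything else amounts to matching \eqref{Fa1} to the template of Lemma~\ref{Ant} and reading off $g=h$.
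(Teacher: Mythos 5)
Your proof is correct, but it takes a genuinely different route from the paper, which in fact offers no self-contained argument at all: the authors simply cite \cite[Lemma 5]{JA} and flag the one subtlety (a composition of a continuous function with a regulated one is regulated). The proof of that cited lemma runs the gauge argument directly: both integrals are approximated by Riemann-type sums over $\delta$-fine partitions, each summand is estimated via \eqref{Fa1}, and the sums on the right converge to the Kurzweil--Stieltjes integral of $\omega(\|x(\cdot)-y(\cdot)\|_n)$ with respect to $h$. You instead reduce everything to the second (``moreover'') part of Lemma~\ref{Ant}, which already encapsulates exactly that partition argument, by taking $U(\tau,t)=F(x(\tau),t)-F(y(\tau),t)$, $f=\omega(\|x(\cdot)-y(\cdot)\|_n)$ and $g=h$; the pointwise hypothesis of Lemma~\ref{Ant} is then precisely \eqref{Fa1}, and the regularity of $f$ is the composition remark the authors highlight. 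This keeps the whole proof inside the paper's own toolkit, which is arguably cleaner than deferring to an external reference. One point you should pin down rather than gesture at: the integrability prerequisite of Lemma~\ref{Ant} is not something ``underlying Definition~\ref{solucao}'' but is exactly Lemma~\ref{rrr} --- since $F$ satisfies \eqref{Fa}, each of the integrals $\int_{s_1}^{s_2}DF(x(\tau),t)$ and $\int_{s_1}^{s_2}DF(y(\tau),t)$ exists, and linearity of the Kurzweil integral then gives integrability of $U$ on $[s_1,s_2]$; likewise, regularity of $U$ in the second variable follows from \eqref{Fa} via the Cauchy criterion, because $h$, being nondecreasing, has one-sided limits everywhere.
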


Next assertion is Lemma 4.5 from \cite{BFM}, for finite dimensional
case see e.g. Lemma 3.9 and Corollary 3.15 in \cite{Schwabik}.

\begin{lemma}\label{rrr}
Assume that $F:\Omega\times\ab\to X$ fulfils \eqref{Fa}. Then, for
any \ $x\in G\ab$ such that $x(s)\in\Omega$ for all $s\in\ab,$ the
integral $\int_{a}^{b}DF(x(\tau),t)$ exists and the inequality
\[
   \left\|\int_{s_1}^{s_2}DF(x(\tau),t)\right\|_n\le |h(s_2)-h(s_1)|
\]
is true for all $s_1,s_2\in\ab.$ Furthermore, the function
\[
   s\in\ab\to\int_a^s DF(x(\tau),t)
\]
has a~bounded variation on $\ab.$

Finally, every solution $x$ of \eqref{EDOG} has a bounded variation
on $\ab$ and, in particular, it is regulated on $\ab.$
\end{lemma}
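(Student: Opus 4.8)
The plan is to treat the four assertions separately, with only the existence of $\int_a^b DF(x(\tau),t)$ requiring genuine work; the norm estimate, the bounded variation of the indefinite integral, and the statement about solutions will all follow by applying \eqref{Fa} termwise to Kurzweil sums and passing to the limit.

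For existence I would approximate $x$ by step functions built from its own values. By the oscillation characterization of regulated functions I can choose step functions $x_k\to x$ uniformly whose values are taken among the values of $x$ itself, so that automatically $x_k(\ab)\subset\Omega$ for every $k$. For each $x_k$ the integral $I_k:=\int_a^b DF(x_k(\tau),t)$ exists and is a finite explicit expression: a gauge forcing the finitely many points of discontinuity of $x_k$ to be tags collapses each within-piece contribution into a telescoping difference $F(c,t')-F(c,t'')$, the remaining terms being localized at those finitely many points. The sequence $(I_k)$ is Cauchy, since Lemma~\ref{JA} applied to the pair $x_k,x_m$ gives
\[
\|I_k-I_m\|_X=\Big\|\int_a^b D\big[F(x_k(\tau),t)-F(x_m(\tau),t)\big]\Big\|_X\le\omega(\|x_k-x_m\|_\infty)\,\big(h(b)-h(a)\big),
\]
which tends to $0$ as $k,m\to\infty$. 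Writing $I:=\lim_k I_k$, I would verify $\int_a^b DF(x(\tau),t)=I$ by a three-term estimate: fixing $k$ large and choosing a gauge $\delta_k$ making $\sum_j DF(x_k(\tau_j),t)$ close to $I_k$, any $\delta_k$-fine partition yields $\|\sum_j DF(x(\tau_j),t)-I\|_X$ bounded by the sum of $\|\sum_j D[F(x(\tau_j),t)-F(x_k(\tau_j),t)]\|_X$, $\|\sum_j DF(x_k(\tau_j),t)-I_k\|_X$, and $\|I_k-I\|_X$. The first of these is controlled, \emph{for every} partition, by the sum-level form of \eqref{Fa1}, namely $\le\omega(\|x-x_k\|_\infty)(h(b)-h(a))$, which is what makes the interchange of limit and integral legitimate. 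It is worth noting that this interchange is exactly where the regularity of $F$ in its first variable enters: the uniform bound comes from \eqref{Fa1} through Lemma~\ref{JA}, so the existence part uses the full membership $F\in\mathcal F$ (implicit in \cite[Lemma~4.5]{BFM}), whereas the estimate below uses only \eqref{Fa}.

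The estimate is then immediate: for $s_1\le s_2$, every Kurzweil sum over $[s_1,s_2]$ satisfies, by \eqref{Fa} and the triangle inequality, $\|\sum_j[F(x(\tau_j),t_j)-F(x(\tau_j),t_{j-1})]\|_X\le\sum_j(h(t_j)-h(t_{j-1}))=h(s_2)-h(s_1)$; passing to the limit gives $\|\int_{s_1}^{s_2}DF(x(\tau),t)\|_X\le|h(s_2)-h(s_1)|$. Bounded variation of $v(s)=\int_a^s DF(x(\tau),t)$ follows from additivity of the integral over adjacent intervals together with this estimate: for any division $a=s_0<\dots<s_m=b$,
\[
\sum_{i=1}^m\|v(s_i)-v(s_{i-1})\|_X=\sum_{i=1}^m\Big\|\int_{s_{i-1}}^{s_i}DF(x(\tau),t)\Big\|_X\le\sum_{i=1}^m\big(h(s_i)-h(s_{i-1})\big)=h(b)-h(a),
\]
so $\var_a^b v\le h(b)-h(a)<\infty$.

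Finally, for a solution $x$ of \eqref{EDOG} the integrals $\int_a^s DF(x(\tau),t)$ exist by Definition~\ref{solucao}, hence by additivity so do $\int_{s_1}^{s_2}DF(x(\tau),t)$, and the preceding estimate applies verbatim: it needs only \eqref{Fa} at the level of sums together with the existence of the integrals, \emph{not} any prior knowledge that $x$ is regulated. Thus $x(s)-x(a)=\int_a^s DF(x(\tau),t)$ has bounded variation with $\var_a^b x\le h(b)-h(a)$, and since every function of bounded variation on $\ab$ is regulated, $x$ is regulated. The main obstacle throughout is the existence part, and within it the identification $\int_a^b DF(x(\tau),t)=I$, which the partition-independent bound from \eqref{Fa1} is precisely designed to secure.
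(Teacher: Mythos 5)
Your argument is correct, and it is essentially the proof that stands behind the paper's treatment of this lemma: the paper itself gives no proof, referring instead to \cite[Lemma 4.5]{BFM} (and, for the finite-dimensional case, to Lemma 3.9 and Corollary 3.15 of \cite{Schwabik}), and the chain you build --- exact evaluation of the integral for step functions via a gauge forcing the discontinuities to be tags, a Cauchy argument for the approximating integrals via Lemma~\ref{JA}, and the three-term estimate whose first term is controlled uniformly over \emph{all} partitions by the sum-level form of \eqref{Fa1} --- is precisely the step-function/uniform-convergence argument used in those references. The only detail you elide is that \eqref{Fa}, together with the regulatedness of the nondecreasing function $h$, is what gives the one-sided limits $F(c,s+)$, $F(c,s-)$ making the localized endpoint terms in the step-function computation converge; this is routine.

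One remark you make in passing deserves emphasis, because it concerns the statement rather than your proof. As literally formulated, Lemma~\ref{rrr} assumes only \eqref{Fa}, and under \eqref{Fa} alone the existence claim for a general regulated $x$ is false: take $X=\R$, $\Omega=\R$, $h(t)=t$, $F(x,t)=\phi(x)\,t$ with $\phi$ the indicator of a non-measurable set, and $x(\tau)=\tau$; then \eqref{Fa} holds, but the Kurzweil sums reduce to $\sum_j\phi(\tau_j)(t_j-t_{j-1})$, i.e. Henstock--Kurzweil sums of a bounded non-measurable (hence non-integrable) function, so $\int_a^b DF(x(\tau),t)$ does not exist. Thus the existence part genuinely requires the full membership $F\in\mathcal{F}(\Omega\times\ab,h,\omega)$ --- exactly what you invoke, what the cited sources assume, and what is available in every application of the lemma in this paper (cf.\ \eqref{B1} and \eqref{C1}) --- while the norm estimate, the bounded-variation conclusion, and the final assertion about solutions survive under \eqref{Fa} alone, a separation your proof makes correctly and explicitly.
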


\begin{remark}
If we consider in \eqref{EDOG} a~particular case $F(x,t)\,{=}\,A(t)\,x,$
where $A\,{:}\,\ab\to\mathcal{L}(\R^n),$ we obtain the generalized
linear ODE
\begin{equation}\label{linear}
   \dfrac{dx}{d\tau}=D[A(t)\,x]
\end{equation}
Obviously, the function $x:\ab\to\R^n$ is a~solution of the generalized
linear ODE \eqref{linear} on $\ab,$ whenever
\begin{equation}\label{linear1}
   x(s)-x(0)=\int_{0}^{s} d[A(t)]\,x(t) \quad\mbox{for \ } s\in\ab,
\end{equation}
where the integral stands for the Kurzweil-Stieltjes one.
\end{remark}

Finally, we state the following basic result from \cite[Theorem 5.1]{BFM}
well illustrating the importance of the class
${\mathcal F}(\Omega\times\ab,h,\omega)$ in the theory of generalized ODEs.
For the finite dimensional case, see \cite[Theorem 4.2]{Schwabik}.

\begin{theorem}\label{ExSol}
Assume there are $h:\ab\to\R$ nondecreasing on $\ab$ and \
$\omega:[0,\infty)\,{\to}\,\R$ increasing and continuous on $[0,\infty)$
with $\omega(0)=0$ such that $F\in\mathcal F(\Omega{\times}\OT,h,\omega;X).$
Furthermore, let $(x_0,t_0)\in\Omega\times[a,b)$ be such that
$x_0+F(x_0,t_0+)-F(x_0,t_0)\in\Omega.$ Then there is a $\Delta>0$ such that
the equation \eqref{EDOG} has a solution $x$ on $[t_0,t_0+\Delta]$ such that
$x(t_0)=x_0.$
\end{theorem}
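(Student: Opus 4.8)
The plan is to recast the problem as a fixed point equation and solve it by Schauder's theorem. For regulated $x$ with values in $\Omega$ define the operator $T$ by $(Tx)(s)=x_0+\int_{t_0}^s DF(x(\tau),t)$ for $s\in[t_0,t_0+\Delta]$. By Lemma \ref{rrr} this integral exists for every such $x$, and by Definition \ref{solucao} a function $x$ with values in $\Omega$ and $x(t_0)=x_0$ solves \eqref{EDOG} on $[t_0,t_0+\Delta]$ if and only if $Tx=x$. Thus it suffices to exhibit a fixed point of $T$ on a suitable set, for $\Delta$ small enough.

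First I would fix the geometry forced by the jump of $F$ at $t_0$. Put $c_1:=x_0+F(x_0,t_0+)-F(x_0,t_0)$, which lies in $\Omega$ by hypothesis, and choose $r>0$ with $\overline B(c_1,r)\subset\Omega$. Since $h$ is nondecreasing and hence regulated, $h(t_0+)$ exists and $h(s)-h(t_0+)\to 0$ as $s\to t_0+$; accordingly I would pick $\Delta>0$ so that $h(t_0+\Delta)-h(t_0+)\le r$. Then define $\mathcal M$ to be the set of $x\in G([t_0,t_0+\Delta];X)$ with $x(t_0)=x_0$, with $x(s)\in\overline B(c_1,r)$ for $s\in(t_0,t_0+\Delta]$, and with $\|x(s)-x(s')\|_X\le h(s)-h(s')$ for all $t_0\le s'\le s$. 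This set is nonempty (it contains the function equal to $x_0$ at $t_0$ and to $c_1$ afterwards), convex, bounded and closed in the supremum norm.

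Next I would verify $T(\mathcal M)\subseteq\mathcal M$. The Lipschitz-in-$h$ bound for $Tx$ is exactly the estimate of Lemma \ref{rrr}, which also gives $(Tx)(t_0)=x_0$. To control the values I would apply Lemma \ref{Ant} with $U(\tau,t)=F(x(\tau),t)$: since $x(t_0)=x_0$, it yields $\Delta^+(Tx)(t_0)=F(x_0,t_0+)-F(x_0,t_0)$, hence $(Tx)(t_0+)=c_1$; letting $s'\to t_0+$ in $\|(Tx)(s)-(Tx)(s')\|_X\le h(s)-h(s')$ then gives $\|(Tx)(s)-c_1\|_X\le h(s)-h(t_0+)\le r$ for $s\in(t_0,t_0+\Delta]$, so $(Tx)(s)\in\overline B(c_1,r)\subset\Omega$. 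Continuity of $T$ on $\mathcal M$ follows from Lemma \ref{JA}, which gives $\|Tx-Ty\|_\infty\le\omega(\|x-y\|_\infty)\,(h(t_0+\Delta)-h(t_0))$, combined with the continuity of $\omega$ at $0$. Finally, the uniform bound $\|x(s)-x(s')\|_X\le h(s)-h(s')$ makes $\mathcal M$ equi-regulated, so by an Arzel\`a--Ascoli-type theorem for regulated functions the closure of $T(\mathcal M)$ is compact; Schauder's fixed point theorem then produces $x=Tx$, the desired solution.

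The step I expect to be the main obstacle is the compactness. Observe that $\omega$ need not yield a contraction (the ratio $\omega(\rho)/\rho$ may blow up near $0$), so a Banach-fixed-point argument is unavailable and Schauder is genuinely needed. Equi-regulatedness controls the horizontal oscillation, but in a general Banach space $X$ one must also secure the relative compactness of $\{x(s):x\in\mathcal M\}$ in $X$ for each $s$, which is automatic only when $X=\R^n$; this is precisely where the finer hypotheses behind the cited Theorem~5.1 of \cite{BFM} enter. A secondary delicate point is the bookkeeping of the jump at $t_0$: the hypothesis $x_0+F(x_0,t_0+)-F(x_0,t_0)\in\Omega$ is exactly what keeps the post-jump value $c_1$ inside $\Omega$ and makes the construction start.
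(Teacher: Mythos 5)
The paper gives no proof of Theorem \ref{ExSol}: it quotes the result from \cite[Theorem 5.1]{BFM} and points to \cite[Theorem 4.2]{Schwabik} for the finite-dimensional case, so your proposal has to be measured against those sources. What you wrote is, in substance, exactly the classical Schwabik argument: the fixed-point reformulation, the convex set $\mathcal M$ anchored at the post-jump value $c_1=x_0+F(x_0,t_0+)-F(x_0,t_0)$, the computation $(Tx)(t_0+)=c_1$ via Lemma \ref{Ant}, the $h$-Lipschitz bounds from Lemma \ref{rrr}, continuity of $T$ from Lemma \ref{JA}, and compactness from equiregulatedness (\cite[Theorem 2.17]{dana}) followed by Schauder's theorem. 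For $X=\R^n$ your proof is complete and correct, and that is the only case actually used elsewhere in the paper.

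For the theorem as stated, however --- an arbitrary Banach space $X$ --- the compactness step is a genuine gap, and your deferral to ``the finer hypotheses behind \cite[Theorem 5.1]{BFM}'' does not close it, because the failure is not one of technique: with a general modulus $\omega$ the statement is false in infinite dimensions, so no proof can exist. Concretely, let $X=c_0$, let $\Omega$ be the open unit ball, let $f(x)=\big(\sqrt{|x_k|}+\tfrac{1}{k+1}\big)_{k\ge 1}$ (Dieudonn\'e's example), and put $F(x,t)=t\,f(x)$. Then $F\in\mathcal F(\Omega\times[0,1],h,\omega;c_0)$ with $h(t)=\tfrac32\,t$ and $\omega(r)=\sqrt{r}$, and $F(x,\cdot)$ is continuous, so the hypothesis $x_0+F(x_0,t_0+)-F(x_0,t_0)\in\Omega$ holds trivially at $x_0=0,$ $t_0=0$. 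Yet any solution of \eqref{EDOG} through $0$ is Lipschitz by Lemma \ref{rrr}, hence a classical solution of $x'=f(x)$, $x(0)=0$; for such a solution each coordinate is positive for $t>0$ and satisfies $x_k'\ge\sqrt{x_k}$, whence $x_k(t)\ge t^2/4$ for every $k$, contradicting $x(t)\in c_0$. So no $\Delta>0$ exists. The infinite-dimensional versions in the literature escape this not by restoring compactness (pointwise relative compactness of $\{x(s):x\in\mathcal M\}$ genuinely fails there, as you noted) but by strengthening $\omega$ to a Lipschitz- or Osgood-type modulus and replacing Schauder by successive approximations, which also yields uniqueness; that is the setting of the cited Banach-space result. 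In short, your argument is the right one precisely in the finite-dimensional case, and the obstruction you flagged is fatal not only to the Schauder route but to the theorem in the generality in which the paper states it.
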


\section{Bifurcation theory for generalized ODEs}\label{necessary}

In this section, we will consider the concept of a~bifurcation point with
respect to a given solution of the parameterized periodic boundary value
problem for the periodic problem
\begin{equation}\label{GDEper}
   \dfrac{dx}{d\tau}=DF(\lambda,x,t),\quad x(0)=x(T).
\end{equation}

In the rest of the paper we have $a=0$ and $0<b=T<\infty.$ Furthermore, given
a Banach space $X,$ the symbol $Id$ stands for identity operator on $X$ and,
for a~given $x_0\in X$ and $\rho>0,$ we denote by $B(x_0,\rho)$ the closed
ball in $X$ centered at $x_0$ and with the radius~$\rho.$

\begin{definition}\label{sol}
Let $\Omega\subset\R^n$ and $\Lambda\subset\R$ be open and
$F:\Lambda\times\Omega\times\OT\to\R^n.$ Then the couple
$(x,\lambda)\in G\OT\times\Lambda$ is a~{\em solution} of the problem
\eqref{GDEper} whenever
\[
    x(s)\in\Omega \mbox{ \ and \ }
    x(s)=x(0)+\int_{0}^{s}DF(\lambda,x(\tau),t)\quad\mbox{for \ } s\in[0,T],
\]
and \ $x(0)=x(T).$
\end{definition}

For our purposes, the following hypotheses will be helpful.
\begin{equation}\label{B1}\hskip-5mm
\left\{\begin{array}{l} \Omega\,{\subset}\,\R^n \mbox{\ and\ }
   \Lambda\,{\subset}\,\R \mbox{\ are open sets};\,\,
   F:\Lambda\,{\times}\,\Omega\,{\times}\,\OT\to\R^n
   \mbox{\ and}
  \\[2mm]
   \mbox{there are\ } h\,{:}\,\OT\,{\to}\,\R \mbox{\ nondecreasing}
   \mbox{\ and\ }\omega\,{:}\,[0,\infty)\,{\to}\,[0,\infty)
  \\[1mm]
   \mbox{increasing and continuous and such that \ } \omega(0)=0 \mbox{ \ and}
  \\[1mm]
   F(\lambda,\cdot,\cdot)\in\mathcal{F}(\Omega\times\OT,h,\omega)
   \mbox{\ for each\ } \lambda\in\Lambda;
\end{array}\right.
\end{equation}
\begin{equation}\label{B2}\hskip-2mm
\left\{\begin{array}{l}
   (x_0,\lambda)\in G\OT\times\Lambda \mbox{\ is a~solution of \eqref{GDEper}
   for any\ } \lambda\in\Lambda \mbox{\ and}
   \\[1mm]
   \mbox{there is\ } \rho>0 \mbox{\ such that\ } x(t)\in\Omega
   \mbox{\ for all\ } (t,x)\in\OT\times B(x_0,\rho).
\end{array}\right.
\end{equation}
Furthermore, let us define
\begin{equation}\label{operador}\hskip-47mm
\left\{\begin{array}{l}\displaystyle
    \Phi(\lambda,x)(s)=x(T)+\int_0^s DF(\lambda,x(\tau),t)
   \\[3mm]
    \quad\mbox{for\ } \lambda\,{\in}\,\Lambda,\,x\,{\in}\,B(x_0,\rho)
    \mbox{\ and\ } s\,{\in}\,\OT,
\end{array}\right.
\end{equation}
whenever the Kurzweil integral on the right hand side has a~sense.

\begin{proposition}\label{equiv}
Assume \eqref{B1} and \eqref{B2} and let the operator $\Phi$ be defined by
\eqref{operador}. Then $\Phi(\lambda,\cdot)$ maps $B(x_0,\rho)$ into $G\OT$
for any $\lambda\in\Lambda.$ Moreover, problem \eqref{GDEper} is equivalent
to finding solutions $(x,\lambda)$ of the operator equation
\begin{equation}\label{op-eq}
   x=\Phi(\lambda,x).
\end{equation}
\end{proposition}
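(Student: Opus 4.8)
The plan is to split the statement into its two assertions: first that $\Phi(\lambda,\cdot)$ maps $B(x_0,\rho)$ into $G\OT$, and then that \eqref{GDEper} is equivalent to the fixed-point equation $x=\Phi(\lambda,x)$. For the first assertion I would fix $\lambda\in\Lambda$ and take an arbitrary $x\in B(x_0,\rho)$. Since the ball is taken in $G\OT$, the function $x$ is regulated, and \eqref{B2} forces $x(t)\in\Omega$ for every $t\in\OT$. Hypothesis \eqref{B1} gives $F(\lambda,\cdot,\cdot)\in\mathcal F(\Omega\times\OT,h,\omega)$, so in particular $F(\lambda,\cdot,\cdot)$ satisfies \eqref{Fa}. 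I would then invoke Lemma~\ref{rrr}: it guarantees the existence of the Kurzweil integral $\int_0^s DF(\lambda,x(\tau),t)$ for every $s\in\OT$ and, crucially, that the map $s\mapsto\int_0^s DF(\lambda,x(\tau),t)$ has bounded variation on $\OT$, hence is regulated. Adding the constant vector $x(T)$ preserves regularity, so $\Phi(\lambda,x)\in G\OT$, which settles the first claim.

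For the equivalence I would argue both implications directly from Definition~\ref{sol}. If $(x,\lambda)$ solves \eqref{GDEper}, then $x(0)=x(T)$, and substituting this into the defining identity $x(s)=x(0)+\int_0^s DF(\lambda,x(\tau),t)$ yields $x(s)=x(T)+\int_0^s DF(\lambda,x(\tau),t)=\Phi(\lambda,x)(s)$ for all $s\in\OT$, i.e.\ $x=\Phi(\lambda,x)$. Conversely, suppose $x\in B(x_0,\rho)$ satisfies $x=\Phi(\lambda,x)$. Evaluating at $s=0$ and using $\int_0^0 DF(\lambda,x(\tau),t)=0$ recovers the periodicity condition $x(0)=x(T)$; replacing $x(T)$ by $x(0)$ in the fixed-point identity then reproduces exactly the integral equation of Definition~\ref{sol}, while $x(s)\in\Omega$ follows from \eqref{B2}. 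Thus $(x,\lambda)$ solves \eqref{GDEper}.

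I do not expect a serious obstacle here. The only genuinely technical point is the well-definedness and regularity of $\Phi(\lambda,x)$, and this is entirely absorbed by Lemma~\ref{rrr}; the equivalence itself is a routine substitution whose whole content is the observation that evaluating the fixed-point equation at $s=0$ encodes the constraint $x(0)=x(T)$. The one place requiring care is to keep the ball $B(x_0,\rho)$ consistently in $G\OT$, so that \eqref{B2} applies and every argument of $F$ remains inside $\Omega$ throughout both parts of the argument.
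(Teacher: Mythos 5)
Your proposal is correct and follows essentially the same route as the paper: the mapping property is obtained from Lemma~\ref{rrr} (existence of the integral and bounded variation, hence regularity, of the indefinite integral), and the equivalence rests on the observation that evaluating the fixed-point identity at $s=0$ yields $x(0)=x(T)$, after which the two formulations coincide by substituting $x(0)$ for $x(T)$. The only difference is cosmetic: you spell out both implications, whereas the paper writes out one direction and declares the other obvious.
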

\begin{proof}
The first part of the statement follows from Lemma \ref{rrr}. Furthermore, if
\begin{equation}\label{fixed-eq}
   x(s)=x(T)+\int_0^s DF(\lambda,x(\tau),t) \quad\mbox{for \ } s\in\OT,
\end{equation}
then for $s=0$ we get $x(0)=x(T).$ As a result, $(x,\lambda)$ is a solution
to \eqref{GDEper}. The opposite implication is obvious.
\end{proof}

Let use recall that recently Federson, Mawhin and Mesquita extended some
classical conditions on the existence of a periodic solution of nonautonomous
ordinary differential equations to the problem of the form \eqref{fixed-eq}
in \cite{JC}, cf. sec.4 therein.

In general, a bifurcation occurs whenever a small change of the parameters
of the given problem causes a qualitative change of the behavior of its
solutions. In our case, we understand to this phenomena in the following way.

\begin{definition}\label{bif}
Solution $(x_0,\lambda_0)\in G\OT\times\Lambda$ of \eqref{op-eq} is said to
be a~{\em bifurcation point} of \eqref{op-eq} (i.e. of \eqref{GDEper}) if
every neighborhood of $(x_0,\lambda_0)$ in $B(x_0,\rho)\times\Lambda$
contains a~solution $(x,\lambda)$ of \eqref{op-eq} such that $x\ne x_0.$
\end{definition}

By an obvious modification of the proof of \cite[Theorem 5.6]{JC}, providing
conditions sufficient for the existence of a bifurcation point of \eqref{op-eq},
we can state its slightly reformulated version.

As usual (cf. e.g. \cite[Section 5.2]{DM}), for a Banach space $X,$ open bounded
set $\Omega\subset X,$ a compact operator $\Phi:\overline{\Omega}\to X$ and
$z\notin(I-\Phi)(\partial\,\Omega),$ the symbol $\mbox{deg}_{LS}(Id-\Phi,\Omega,z)$
stands for the {\it Leray-Schauder degree} of $Id-\Phi$ with respect to $\Omega,$ at
the point $z.$ Furthermore, if $a$ is an isolated fixed point of $\Phi,$ then
the value $\mbox{ind}_{LS}(Id-\Phi,a)$ defined by
\[
    \mbox{ind}_{LS}(Id-\Phi,a)=\deg_{LS}[I-\Phi,B(a,r),0]
    \quad\mbox{for small \ } r>0
\]
is said to be the {\it Leray-Schauder index} of $Id-\Phi$ at $a$, or sometimes also
the {\it index of an isolated fixed point} of $\Phi.$

\begin{theorem}\label{TheEx}
Assume \eqref{B1}, \eqref{B2} and
\begin{equation}\label{B3}\hskip-10mm
\left\{\begin{array}{l}
   \mbox{there is a function \ } \gamma:\OT\to\R \mbox{\ nondecreasing and
   such that}
 \\[1mm]
   \mbox{for any\ } \eps>0 \mbox{\ there is a\ } \delta>0 \mbox{\ such that}
\\[2mm]\displaystyle\quad
   \|F(\lambda_1,x,t)\,{-}\,F(\lambda_2,x,t)
                     \,{-}\,F(\lambda_1,x,s)\,{+}\,F(\lambda_2,x,s)\|_n
   {<}\,\eps\,|\gamma(t)\,{-}\,\gamma(s)|
 \\[1mm]
   \mbox{for\ } x\in\Omega,\,t,s\in\OT
   \mbox{\ and\ } \lambda_1,\lambda_2\in\Lambda
   \mbox{\ such that\ } |\lambda_1-\lambda_2|<\delta.
\end{array}\right.\hskip-8mm
\end{equation}
Moreover, let the operator $\Phi$ be defined by \eqref{operador} and let
$[\lambda^*_1,\lambda^*_2]\subset\Lambda$ be such that
\begin{align}\label{ind}
  &x_0 \mbox{\ is an isolated fixed point of the operators \ }
  \Phi(\lambda^*_1,\cdot) \mbox{ \ and \ } \Phi(\lambda^*_2,\cdot)
\\\noalign{\noindent\mbox{and}}
  &\mbox{ind}_{LS}(Id-\Phi(\lambda^*_1,\cdot),0)
    \ne\\mbox{ind}_{LS}(Id-\Phi(\lambda^*_2,\cdot),0).
\end{align}
Then there is \ $\lambda_0\in[\lambda^*_1,\lambda^*_2]$ \ such that
\ $(x_0,\lambda_0)$ is a bifurcation point of \eqref{GDEper}.
\end{theorem}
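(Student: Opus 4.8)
The plan is to reduce this to a standard bifurcation-via-degree argument on the space of regulated functions $G\OT$, using the fixed-point reformulation already established in Proposition~\ref{equiv}. The key structural fact I would establish first is that, under \eqref{B1}, \eqref{B2}, and \eqref{B3}, the operator $\Phi:[\lambda^*_1,\lambda^*_2]\times B(x_0,\rho)\to G\OT$ is jointly continuous and that each $\Phi(\lambda,\cdot)$ is a compact operator on the closed ball $B(x_0,\rho)$. Compactness in $x$ follows from the uniform variation bound in Lemma~\ref{rrr}: for $x\in B(x_0,\rho)$ the functions $\Phi(\lambda,x)$ all satisfy $\|\Phi(\lambda,x)(t)-\Phi(\lambda,x)(s)\|_n\le|h(t)-h(s)|$, so they are uniformly bounded with uniformly controlled oscillation governed by the single nondecreasing $h$; an Arzel\`a--Ascoli-type / Helly-type selection theorem in $G\OT$ then yields relative compactness of the image. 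Continuity in $x$ comes from Lemma~\ref{JA}, which controls $\|\int_{s_1}^{s_2}D[F(\lambda,x(\tau),t)-F(\lambda,y(\tau),t)]\|_n$ by $\int_{s_1}^{s_2}\omega(\|x-y\|_\infty)\,dh$, and continuity in $\lambda$ is exactly what hypothesis \eqref{B3} is designed to deliver, via Lemma~\ref{Ant} applied to the difference $F(\lambda_1,\cdot,\cdot)-F(\lambda_2,\cdot,\cdot)$ together with the nondecreasing $\gamma$.

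Once $\Phi$ is a compact homotopy parameterized by $\lambda\in[\lambda^*_1,\lambda^*_2]$, the heart of the argument is the classical Leray--Schauder homotopy invariance contrapositive. I would argue by contradiction: suppose no $\lambda_0\in[\lambda^*_1,\lambda^*_2]$ gives a bifurcation point at $(x_0,\lambda_0)$. Then for each such $\lambda$ there is a neighborhood of $(x_0,\lambda)$ in $B(x_0,\rho)\times[\lambda^*_1,\lambda^*_2]$ containing no solution of \eqref{op-eq} other than $x_0$ itself. By compactness of the parameter interval, one extracts a uniform radius $r>0$ and shows that $x_0$ is the only fixed point of $\Phi(\lambda,\cdot)$ in $B(x_0,r)$ for \emph{every} $\lambda\in[\lambda^*_1,\lambda^*_2]$. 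Consequently $Id-\Phi(\lambda,\cdot)$ never vanishes on $\partial B(x_0,r)$ for any $\lambda$ in the interval, so the map $(\lambda,x)\mapsto x-\Phi(\lambda,x)$ is an admissible compact homotopy on $\partial B(x_0,r)$ that avoids $0$.

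Homotopy invariance of the Leray--Schauder degree then forces $\deg_{LS}(Id-\Phi(\lambda^*_1,\cdot),B(x_0,r),0)=\deg_{LS}(Id-\Phi(\lambda^*_2,\cdot),B(x_0,r),0)$. But because $x_0$ is an isolated fixed point of both endpoint operators by \eqref{ind}, these two degrees are precisely the Leray--Schauder indices $\mathrm{ind}_{LS}(Id-\Phi(\lambda^*_1,\cdot),0)$ and $\mathrm{ind}_{LS}(Id-\Phi(\lambda^*_2,\cdot),0)$ (for $r$ small enough to isolate $x_0$), which are assumed to differ. This contradiction establishes the existence of the bifurcation point. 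I expect the main obstacle to be the verification of joint continuity of $\Phi$ and the uniform compactness needed to make the homotopy admissible on the whole interval simultaneously; in particular, extracting the \emph{uniform} exclusion radius $r$ from the pointwise-in-$\lambda$ non-bifurcation assumption requires care, and this is exactly where the equicontinuity-type estimate furnished by \eqref{B3} (through the single nondecreasing $\gamma$ independent of $\lambda$) becomes essential, since it prevents the candidate fixed points from escaping as $\lambda$ varies.
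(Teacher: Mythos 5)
Your proposal is correct and is essentially the argument this paper relies on: the paper gives no written proof of Theorem \ref{TheEx}, stating only that it follows by an obvious modification of the proof of \cite[Theorem 5.6]{JC}, and that proof is precisely your degree-theoretic scheme --- argue by contradiction, extract a uniform exclusion radius $r>0$ from the pointwise non-bifurcation assumption by compactness of $[\lambda^*_1,\lambda^*_2]$, observe that $(\lambda,x)\mapsto x-\Phi(\lambda,x)$ is then an admissible compact homotopy avoiding $0$ on $\partial B(x_0,r)$, invoke homotopy invariance of the Leray--Schauder degree, and identify the endpoint degrees with the Leray--Schauder indices at the isolated fixed point $x_0$, contradicting \eqref{ind}. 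The supporting facts you invoke are moreover exactly the ingredients the paper establishes elsewhere for related purposes: compactness of $\Phi(\lambda,\cdot)$ via the oscillation bound of Lemma \ref{rrr} combined with Fra\v{n}kov\'a's compactness criterion for regulated functions (used in the proof of Theorem \ref{Alt}), and joint continuity of $\Phi$ via Lemma \ref{JA}, Lemma \ref{Ant} and hypothesis \eqref{B3} (this is Proposition \ref{ucG}).
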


Our wish is to deliver also conditions which are necessary for the existence of
a~bifurcation point of the equation \eqref{op-eq}. This will be given by
Theorem \ref{NC}. Before formulating and proving this theorem let us take attention
to the following immediate observation:

If $(x_0,\lambda_0)$ is a solution to \eqref{op-eq}, then, by Definition \ref{bif}
it is not a bifurcation point of \eqref{op-eq} whenever it has a neighborhood
$\mathcal{U}\subset B(x_0,\rho)\times\Lambda$ in $G\OT\times\R$ such that $x=x_0$
holds for any solution $(x,\lambda)$ to \eqref{op-eq} belonging to $\mathcal{U}.$
It follows that the set of couples $(x,\lambda)\in G\OT\times\Lambda$ which are not
bifurcation points of \eqref{op-eq} is open in $G\OT\times\R.$ In particular, we have

\begin{corollary}\label{cor-def}
If $(x_0,\lambda_0)$ is not a bifurcation point of \eqref{op-eq} then there is
a $\delta>0$ such that the set $B((x_0,\lambda_0),\delta)$ does not contain any
bifurcation point of \eqref{op-eq}.
\end{corollary}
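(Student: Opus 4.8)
The plan is to reduce the statement to the openness of the set of non-bifurcation points, which was already recorded in the paragraph preceding the corollary, and then simply to invoke the definition of an open set. Concretely, I would argue directly as follows.

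First I would unwind Definition~\ref{bif} for the point $(x_0,\lambda_0)$. Since $(x_0,\lambda_0)$ is a solution of \eqref{op-eq} (by \eqref{B2}, $x_0$ solves the problem for every $\lambda\in\Lambda$) but is not a bifurcation point, negating Definition~\ref{bif} furnishes a neighborhood $\mathcal{U}\subset B(x_0,\rho)\times\Lambda$ of $(x_0,\lambda_0)$ in $G\OT\times\R$ such that every solution $(x,\lambda)$ of \eqref{op-eq} lying in $\mathcal{U}$ satisfies $x=x_0$.

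Next I would show that no point of $\mathcal{U}$ is a bifurcation point. Let $(x_1,\lambda_1)\in\mathcal{U}$ be arbitrary. If $(x_1,\lambda_1)$ is not a solution of \eqref{op-eq}, then it is not a bifurcation point, because by Definition~\ref{bif} bifurcation points are solutions. If instead $(x_1,\lambda_1)$ is a solution, then the defining property of $\mathcal{U}$ forces $x_1=x_0$. Choosing a smaller neighborhood $\mathcal{V}\subset\mathcal{U}$ of $(x_1,\lambda_1)=(x_0,\lambda_1)$, every solution in $\mathcal{V}\subset\mathcal{U}$ again has $x$-component equal to $x_0=x_1$; hence $\mathcal{V}$ contains no solution $(x,\lambda)$ with $x\ne x_1$, and by Definition~\ref{bif} the point $(x_1,\lambda_1)$ is not a bifurcation point either.

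Finally, since $\mathcal{U}$ is a neighborhood of $(x_0,\lambda_0)$ containing no bifurcation point, I would pick $\delta>0$ so small that $B((x_0,\lambda_0),\delta)\subset\mathcal{U}$, and this $\delta$ is the required one. I expect no genuine analytic obstacle here: the entire content is topological, and the only point demanding care is the bookkeeping with the nested neighborhoods $\mathcal{V}\subset\mathcal{U}$ together with the quantifier structure of Definition~\ref{bif}, i.e. correctly turning ``every neighborhood contains a distinct solution'' into ``some neighborhood contains only $x_0$.'' In essence the corollary is merely the restatement, in terms of balls, of the openness of the non-bifurcation set already observed above.
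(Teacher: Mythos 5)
Your proposal is correct and follows essentially the same route as the paper: the paper states the corollary as an immediate consequence of the observation (in the paragraph preceding it) that ``not a bifurcation point'' yields a neighborhood $\mathcal{U}$ in which every solution has $x=x_0$, and that consequently the set of non-bifurcation points is open. Your two-case argument (non-solutions are trivially not bifurcation points; solutions in $\mathcal{U}$ must equal $x_0$ and inherit $\mathcal{U}$, suitably shrunk, as a witnessing neighborhood) merely fills in the details the paper leaves implicit, so there is nothing genuinely different to compare.
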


Furthermore, in the proof of Theorem \ref{NC} the notion of the derivative of the
operator function $\Phi$ is needed.

\begin{definition}\label{frechet}
Let $X,Y$ be Banach spaces, $D\subset X$ open and $G$ an operator function mapping
$D$ into $Y.$ By the derivative $G\sp{\,\prime}(x)$ of $G$ at the point $x\in D$ we
understand its Frechet derivative at $x,$ i.e. $G\sp{\,\prime}(x)$ is the linear
bounded operator on $X$ such that
\[
 \lim_{\vartheta\to 0+}
   \left\|\frac{G(x+\vartheta\,z)-G(x)}{\vartheta}-G\sp{\,\prime}(x)\,z\right\|_Y=0
 \quad\mbox{for all\ } z\in X.
\]
In particular, derivative of $\Phi(\lambda,\cdot)$ at $x$ will be denoted by
$\Phi\sp{\,\prime}_x(\lambda,x)$ and, similarly, derivative of the function
$F(\lambda,\cdot,t):\Omega\to\R^n$ at $x\in\Omega$ is denoted as
$F\sp{\,\prime}_x(\lambda,t,x).$ Recall that
$F\sp{\,\prime}_x(\lambda,t,x)\in\mathcal{L}(\R^n)$ is represented by
$n\times n$-Matrix.
\end{definition}

Next assertion provides the explicit form of the derivative of
$\Phi(\lambda,\cdot).$

\begin{proposition}\label{derivative}
Assume that the conditions \eqref{B1} and \eqref{B2} are satisfied, $\Phi$ is
defined by \eqref{operador} and $\rho>0$ be given by \eqref{B2}.
Furthermore, suppose that for each \ $(\lambda,x,t)\in\Lambda\time\Omega\times\OT$
the function \ $F$ \ has a derivative $F\sp{\,\prime}_x(\lambda,x,t)$ \ which is
for each $(\lambda,t)\in\Lambda\times\OT$ continuous with respect to $x$ on $\Omega$
and such that
\begin{equation}\label{C1}
\left\{\begin{array}{l}
  \quad
  F\sp{\,\prime}_x(\lambda,\cdot,\cdot)
   \in\mathcal{F}(\Omega\times\OT,\wt{h},\wt{\omega}\,;\mathcal{L}(\R^n))
  \mbox{\ for all\ } \lambda\in\Lambda,
 \\[1mm]\mbox{where\ }
 \\[1mm]\quad
  \wt{h}:\OT\to[0,\infty) \mbox{\ is nondecreasing on\ } \ab
 \\[1mm]\mbox{and}
 \\[1mm]\quad
  \wt{\omega}\,{:}\,[0,\infty)\,{\to}[0,\infty)
  \mbox{\ is continuous and increasing on\ } [0,\infty)
  \mbox{\ and \ } \wt{\omega}(0)=0.
\end{array}\right.
\end{equation}
Then, for each $(\lambda,x)\in\Lambda\times B(x_0,\rho)$ the derivative
$\Phi\sp{\,\prime}_x(\lambda,x)$ of $\Phi(\lambda,\cdot)$ at $x$ is given
by
\begin{equation}\label{de}\hskip-2mm
  \Big(\Phi\sp{\,\prime}_x(\lambda,x)\,z\Big)(s)\,{=}\,z(T)\,{+}\hskip-1mm
  \int_0^s\hskip-1mm D[F\sp{\,\prime}_{x}(\lambda,x(\tau),t)\,z(\tau)]
  \mbox{\ for\ } z\in G\OT \mbox{\ and\ } s\in\OT.
\end{equation}
\end{proposition}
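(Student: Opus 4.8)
The plan is to verify the Fréchet-derivative formula \eqref{de} directly from the definition in Definition \ref{frechet}. Fix $(\lambda,x)\in\Lambda\times B(x_0,\rho)$ and $z\in G\OT$, and for small $\vartheta>0$ form the difference quotient of $\Phi(\lambda,\cdot)$ in the direction $z$. Since $\Phi$ is an integral operator, the constant term $x(T)$ cancels in $\Phi(\lambda,x+\vartheta z)-\Phi(\lambda,x)$, and after subtracting the candidate $\bigl(\Phi'_x(\lambda,x)z\bigr)(s)$ from \eqref{de} the $z(T)$ terms cancel as well. Thus the quantity whose supremal norm over $s\in\OT$ must tend to $0$ is
\begin{equation}\label{rem}
  R_\vartheta(s)=\int_0^s D\!\left[\frac{F(\lambda,x(\tau)+\vartheta z(\tau),t)-F(\lambda,x(\tau),t)}{\vartheta}-F'_x(\lambda,x(\tau),t)\,z(\tau)\right].
\end{equation}
The first step is therefore to show $\|R_\vartheta\|_\infty\to 0$ as $\vartheta\to 0+$, which reduces the whole statement to an integral estimate.

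The key analytic input is the Mean Value Theorem applied to the map $u\mapsto F(\lambda,u,t)$ on the segment between $x(\tau)$ and $x(\tau)+\vartheta z(\tau)$, both of which lie in $\Omega$ by \eqref{B2} once $\vartheta$ is small. This writes the bracketed integrand in \eqref{rem} pointwise in $t$ as $\int_0^1\bigl[F'_x(\lambda,x(\tau)+\theta\vartheta z(\tau),t)-F'_x(\lambda,x(\tau),t)\bigr]\,z(\tau)\,d\theta$, so the integrand is controlled by the modulus of continuity of $F'_x$ in its spatial argument. To pass this pointwise bound to a bound on the Kurzweil integral $R_\vartheta(s)$, I would invoke the second part of Lemma \ref{Ant}: the integrator-type Lipschitz condition needed there is exactly the defining estimate \eqref{Fa} for the class $\mathcal F(\Omega\times\OT,\wt h,\wt\omega;\mathcal L(\R^n))$ assumed in \eqref{C1}, which guarantees that $U(\tau,t)=[\,\cdot\,]\,z(\tau)$, with the bracket as in \eqref{rem}, is a legitimate, integrable integrand of bounded increment controlled by $\wt h$. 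Consequently
\begin{equation}\label{est}
  \|R_\vartheta(s)\|_n\le\int_0^T\sup_{\theta\in[0,1]}\bigl\|F'_x(\lambda,x(\tau)+\theta\vartheta z(\tau),t)-F'_x(\lambda,x(\tau),t)\bigr\|_{n\times n}\,\|z(\tau)\|_n\,d\wt h(\tau).
\end{equation}

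The final step is to let $\vartheta\to 0+$ in \eqref{est}. Since $z$ is regulated and hence bounded, $\|\vartheta z(\tau)\|_n\to 0$ uniformly in $\tau$; the assumed continuity of $F'_x(\lambda,\cdot,t)$ in $x$ on $\Omega$ then forces the integrand to converge to $0$. The main obstacle, and the step deserving the most care, is justifying the interchange of limit and integral in \eqref{est}: continuity of $F'_x$ in $x$ gives only pointwise convergence, so I would upgrade to uniform convergence on the compact tube $\{(x(\tau)+u,\tau):\tau\in\OT,\ \|u\|_n\le\rho\}$ by combining compactness with the equi-control provided by $\wt\omega$ in \eqref{C1} (the modulus $\wt\omega(\|\theta\vartheta z(\tau)\|_n)$ bounds the increment of $F'_x$ uniformly and tends to $0$ since $\wt\omega$ is continuous with $\wt\omega(0)=0$), whence a single application of dominated convergence against the finite measure $d\wt h$ finishes the argument. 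This shows $\|R_\vartheta\|_\infty\to 0$, and it remains only to note that $z\mapsto\bigl(\Phi'_x(\lambda,x)z\bigr)$ defined by \eqref{de} is linear in $z$ and bounded on $G\OT$ — linearity is immediate and boundedness follows from Lemma \ref{rrr} applied to the integrand $F'_x(\lambda,x(\tau),t)\,z(\tau)\in\mathcal F(\Omega\times\OT,\wt h,\wt\omega;\mathcal L(\R^n))$ — so that the limit operator genuinely qualifies as the Fréchet derivative.
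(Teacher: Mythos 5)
Your skeleton (difference quotient, Mean Value Theorem in integral form, bounding the Kurzweil integral of the remainder, then $\vartheta\to0+$) is the same as the paper's, but there is a genuine gap at exactly the step you single out, and it is not the interchange of limit and integral — it is the estimate preceding it. To bound the Kurzweil integral $\int_0^s DU(\tau,t)$ via the second part of Lemma \ref{Ant}, you must bound the \emph{increment} $U(\tau,t)-U(\tau,s)$ in the second variable by a product of the form $|f(\tau)|\,|g(t)-g(s)|$; pointwise bounds on the "integrand" are irrelevant, because the Kurzweil integral is defined through Riemann-type sums of such increments. You assert that the hypothesis needed for Lemma \ref{Ant} is "exactly the defining estimate \eqref{Fa}," but \eqref{Fa} for $F\sp{\,\prime}_x$ (i.e. \eqref{C11}) applied to the two terms coming from the Mean Value Theorem only yields $\|U(\tau,t)-U(\tau,s)\|_n\le 2\,\|z(\tau)\|_n\,|\wt{h}(t)-\wt{h}(s)|$, and hence a bound of order $2\,\|z\|_\infty\,[\wt{h}(T)-\wt{h}(0)]$ that does \emph{not} shrink as $\vartheta\to0+$. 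Moreover, your displayed estimate for the remainder is not well formed: the variable $t$ occurs free inside an integral in $d\wt{h}(\tau)$, and no version of Lemma \ref{Ant} produces such an expression.

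The missing ingredient — and the one the paper actually uses — is the second defining condition of the class $\mathcal{F}$, namely \eqref{Fa1}, which for $F\sp{\,\prime}_x$ reads \eqref{C12}: the mixed difference $F\sp{\,\prime}_x(\lambda,y,t)-F\sp{\,\prime}_x(\lambda,y,s)-F\sp{\,\prime}_x(\lambda,x,t)+F\sp{\,\prime}_x(\lambda,x,s)$ is bounded by $\wt{\omega}(\|y-x\|_n)\,|\wt{h}(t)-\wt{h}(s)|$. Taking $y=\alpha\,(x(\tau)+\vartheta\,z(\tau))+(1-\alpha)\,x(\tau)$ inside the Mean Value representation gives $\|U(\tau,t)-U(\tau,s)\|_n\le\wt{\omega}(\vartheta\,\|z\|_\infty)\,\|z\|_\infty\,|\wt{h}(t)-\wt{h}(s)|$, and then Lemma \ref{Ant} yields $\sup_r\|\int_0^r DU(\tau,t)\|_n\le\wt{\omega}(\vartheta\,\|z\|_\infty)\,[\wt{h}(T)-\wt{h}(0)]\,\|z\|_\infty\to0$, since $\wt{\omega}$ is continuous with $\wt{\omega}(0)=0$. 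With this, your entire final paragraph (compactness of the tube, upgrading pointwise to uniform convergence, dominated convergence against $d\wt{h}$) becomes unnecessary — and, as written, it cannot repair the misapplied lemma, because dominated-convergence reasoning applies to Stieltjes integrals of point functions, not to the Kurzweil integral of $U$, whose size is controlled only through its increments; note also that the paper's proof never uses the pointwise continuity of $F\sp{\,\prime}_x$ in $x$ at all. Finally, your boundedness claim via Lemma \ref{rrr} needs a small repair (the increment bound for $(w,t)\mapsto F\sp{\,\prime}_x(\lambda,x(\tau),t)\,w$ scales with $\|w\|_n$, so \eqref{Fa} with $\wt{h}$ itself fails); the paper instead gets boundedness directly from Lemma \ref{Ant} and \eqref{C11}.
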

\begin{proof}
First, recall that  $F\sp{\,\prime}_x(\lambda,\cdot,\cdot)
\in\mathcal{F}(\Omega\times\OT,\wt{h},\wt{\omega};\mathcal{L}(\R^n))$
means that
\begin{align}\label{C11}
 &\left\{\begin{array}{l}
   \|F\sp{\,\prime}_{x}(\lambda,x,t)-F\sp{\,\prime}_{x}(\lambda,x,s)\|_{n\times n}
   \le|\wt{h}(t)-\wt{h}(s)|
\\[1mm]
   \hskip35mm\mbox{for\ } \lambda\in\Lambda, x\in\Omega\mbox{\ and\ } t,s\in\OT.
  \end{array}\right.
\\\noalign{\noindent\mbox{and}\vskip-7mm}\nonumber
\\\label{C12}
 &\left\{\begin{array}{l}
    \|F\sp{\,\prime}_{x}(\lambda,x,t)-F\sp{\,\prime}_{x}(\lambda,x,s)
    -F\sp{\,\prime}_{x}(\lambda,y,t)+F\sp{\,\prime}_{x}(\lambda,y,s)\|_{n\times n}
 \\[1mm]\hskip10mm
  \le\wt{\omega}(\|x{-}y\|_n)\,|\wt{h}(t)\,{-}\,\wt{h}(s)|
 \\[1mm]\hskip35mm
  \mbox{for\ }\lambda\in\Lambda,\, x,y\in\Omega\mbox{\ and\ } t,s\in\OT.
  \end{array}\right.
\end{align}
By Proposition \ref{equiv}, $\Phi$ maps $B(x_0,\rho)$ into $G\OT$ for any
$\lambda\in\Lambda.$ Let $x\in B(x_0,\rho)$ and $\lambda\in\Lambda$ be given.
By \eqref{B2}, $x(t)\in\Omega$ for all $t\in\OT.$ Consider the operator
function \ $\Psi$ \ defined by
\[
  \Big(\Psi(\lambda,x)\,z\Big)(r)
  =z(T)+\int_0^r D[F\sp{\,\prime}_{x}(\lambda,x(\tau),t)\,z(\tau)]
  \mbox{ \ for\ } z\in G\OT \mbox{\ and\ } r\in\OT.
\]
Obviously, $\Psi(\lambda,x):G\OT\to G\OT$ \ is linear and bounded. Indeed,
by Lemma \ref{Ant} and \eqref{C11} we have
\begin{align*}
  \|\Psi(\lambda,x)\,z\|_{\infty}
  &=\sup_{r\in\OT}\|\Big(\Psi(\lambda,x)\,z\Big)(r)\|_n
 \\
  &=\sup_{r\in\OT}\left\|z(T)
      +\int_0^r D[F\sp{\,\prime}_{x}(\lambda,x(\tau),t)\,z(\tau)]\right\|_n
 \\
  &\le\|z(T)\|_n+\sup_{r\in\OT}\int_0^r\|z(\tau)\|_n\,d \wt{h}(\tau)
   \le[1+(\wt{h}(T)-\wt{h}(0))]\,\|z\|_{\infty}
\end{align*}
for each $z\,{\in}\,G\OT.$

\medskip

We want to show that
\begin{equation}\label{lim}
   \lim_{\vartheta\to 0+}
   \left\|\dfrac{\Phi(\lambda,x+\vartheta\,z)-\Phi(\lambda,x)}{\vartheta}
          -\Psi(\lambda,x)\,z\right\|_\infty=0
   \quad\mbox{for all\ } z\in G\OT.
\end{equation}
To this aim, let $z\in G\OT$ \ be given. Then, for every $r\in\OT$ and
$\vartheta\in (0,1)$ sufficiently small we have $x+\vartheta\,z\in B(x_0,\rho)$
and
\[
 \frac{\Phi(\lambda,x+\vartheta\,z)(r)-\Phi(\lambda,x)(r)}
      {\vartheta}-(\Psi(\lambda,x)\,z)(r)
 =\int_0^r DU(\tau,t),
\]
where
\begin{equation}\label{U}\hskip-11mm
\left\{\begin{array}{l}\displaystyle
  U(\tau,t)=\dfrac{F(\lambda,x(\tau)+\vartheta\,z(\tau),t)-F(\lambda,x(\tau),t)}
                  {\vartheta}
               -F\sp{\,\prime}_{x}(\lambda,x(\tau),t)\,z(\tau)
 \\[3mm]
  \mbox{for \ }\tau,t\in\OT.
\end{array}\right.
\end{equation}
Notice, that due to convexity of $B(x_0,\rho),$ the functions
$\alpha\,(x+\vartheta\,z)+(1-\alpha)\,x$ belong to $B(x_0,\rho)$
for each $\alpha\in[0,1].$ In particular,
$\alpha\,(x(\tau)+\vartheta\,z(\tau))+(1-\alpha)\,x(\tau)\in\Omega$
for all $\tau\in\OT$ and $\alpha\in[0,1].$
Thus, we can use the Mean Value Theorem for vector-valued functions
(see e.g. \cite[Lemma 8.11]{Peter}) to verify that the relations
\begin{multline*}
 F(\lambda,x(\tau)+\vartheta\,z(\tau),t)-F(\lambda,x(\tau),t)
\\
 =\left[\int_0^1
    F\sp{\,\prime}_{x}
     (\lambda,\alpha\,(x(\tau)+\vartheta\,z(\tau)){+}(1-\alpha)\,x(\tau),t)\,d \alpha
  \right]\vartheta\,z(\tau)
\end{multline*}
are true for arbitrary $t,\tau\in\OT.$ Hence, we can rearrange the difference
$U(\tau,t)-U(\tau,s)$ as follows
\begin{equation}\label{4.6}
\hskip-6mm
\left\{\begin{array}{l}\displaystyle
 U(\tau,t)-U(\tau,s)
 =\Bigg[
   \int_0^1\Big[F\sp{\,\prime}_{x}(\lambda,\alpha\,(x(\tau){+}\vartheta\,z(\tau))+(1{-}\alpha)\,x(\tau),t)
 \\[5mm]\hskip42mm
   -F\sp{\,\prime}_{x}(\lambda,\alpha\,(x(\tau){+}\vartheta\,z(\tau))+(1{-}\alpha)\,x(\tau),s)\big]d\alpha
 \\[3mm]\hskip42mm\displaystyle
   -\int_0^1\big[F\sp{\,\prime}_{x}(\lambda,x(\tau),t)-F\sp{\,\prime}_{x}(\lambda,x(\tau),s)\big]\,d\alpha\Bigg]\,z(\tau)
 \\[3mm]
  \mbox{for \ } t,s,\tau\in\OT.
\end{array}\right.\hskip10mm
\end{equation}
Furthermore, using \eqref{C12} we obtain
\begin{equation}\label{4.7}
\hskip-12mm\left\{\begin{array}{l}\displaystyle
  \Big\|F\sp{\,\prime}_{x}
          (\lambda,\alpha\,(x(\tau){+}\vartheta\,z(\tau)){+}(1{-}\alpha)\,x(\tau),t)
\\[2mm]\hskip8mm\displaystyle
       -F\sp{\,\prime}_{x}
          (\lambda,\alpha\,(x(\tau){+}\vartheta\,z(\tau)){+}(1{-}\alpha)\,x(\tau),s)
\\[2mm]\hskip8mm\displaystyle
       -F\sp{\,\prime}_{x}
         (\lambda,x(\tau),t)+F\sp{\,\prime}_{x}(\lambda,x(\tau),s)\Big\|_{n\times n}
   \le \wt{\omega}(\vartheta\,\|z\|_\infty)\,|\wt{h}(t)-\wt{h}(s)|
\\[2mm]
   \mbox{for \ } \vartheta\in[0,1] \mbox{\ and\ } t,s,\tau\in\OT.
\end{array}\right.
\end{equation}
Inserting \eqref{4.7} into \eqref{4.6}, we verify that the inequality
\[
  \|U(\tau,t)-U(\tau,s)\|_n
  \le \wt{\omega}(\vartheta\,\|z\|_\infty)\,|\wt{h}(t)-\wt{h}(s)|\,\|z\|_\infty
\]
holds for all $t,s,\tau\in[0,t].$ Finally, making use of Lemma \ref{Ant} we achieve
the inequality
\[
   \sup_{r\in\OT}\Big\|\int_0^r DU(\tau,t)\Big\|_n
   \le\int_0^T \wt{\omega}(\vartheta\,\|z\|_\infty)\,d \wt{h}\,\|z\|_\infty
   =\wt{\omega}(\vartheta\,\|z\|_\infty)\,[\wt{h}(T)-\wt{h}(0)]\,\|z\|_\infty.
\]

This, together with \eqref{U}, implies the relations
\begin{align*}
& 0\le\lim_{\vartheta\to 0+}
 \left\|
    \dfrac{\Phi(\lambda,x+\vartheta\,z)-\Phi(\lambda,x)}{\vartheta}-\Psi(\lambda,x)\,z
 \right\|_\infty
\\[2mm]&\hskip25mm
 \le\lim_{\vartheta\to 0+}\wt{\omega}(\vartheta\,\|z\|_\infty)
                                \,[\wt{h}(T)-\wt{h}(0)]\,\|z\|_\infty=0,
\end{align*}
i.e. the desired relation \eqref{lim} is true. This completes the proof.
\end{proof}

Next two propositions show that when we include the conditions \eqref{B3} and/or
similar condition \eqref{C3} on the derivative of $F$, we reach the continuity
of $\Phi$ and of its derivative on $\Lambda\times B(x_0,\rho).$

\begin{proposition}\label{ucG}
Assume that \eqref{B1}, \eqref{B2}, \eqref{B3} are satisfied and let
$\Phi$ be given by \eqref{operador}. Then $\Phi$ is continuous on
$\Lambda\times B(x_0,\rho).$
\end{proposition}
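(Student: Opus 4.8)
The plan is to verify continuity of $\Phi$ at an arbitrary point $(\lambda,x)\in\Lambda\times B(x_0,\rho)$ directly from the definition, controlling the perturbation in the state $x$ and the perturbation in the parameter $\lambda$ by two separate estimates. For $(\lambda',x')$ near $(\lambda,x)$ I would write, for every $s\in\OT$,
\[
\Phi(\lambda',x')(s)-\Phi(\lambda,x)(s)
 =\big(x'(T)-x(T)\big)
  +\int_0^s D\big[F(\lambda',x'(\tau),t)-F(\lambda,x(\tau),t)\big],
\]
the integral being well defined by Lemma \ref{rrr} and linearity of the Kurzweil integral. The key step is the telescoping decomposition
\[
F(\lambda',x'(\tau),t)-F(\lambda,x(\tau),t)
 =\big[F(\lambda',x'(\tau),t)-F(\lambda',x(\tau),t)\big]
  +\big[F(\lambda',x(\tau),t)-F(\lambda,x(\tau),t)\big],
\]
which separates the dependence on $x$ from the dependence on $\lambda$.

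For the first bracket I would invoke Lemma \ref{JA} with $F(\lambda',\cdot,\cdot)\in\mathcal F(\Omega\times\OT,h,\omega)$, which yields
\[
\sup_{s\in\OT}\Big\|\int_0^s D\big[F(\lambda',x'(\tau),t)-F(\lambda',x(\tau),t)\big]\Big\|_n
 \le\int_0^T\omega(\|x'(t)-x(t)\|_n)\,dh(t)
 \le\omega(\|x'-x\|_\infty)\,[h(T)-h(0)],
\]
where the last inequality uses monotonicity of $\omega$ together with $\|x'(t)-x(t)\|_n\le\|x'-x\|_\infty$. For the second bracket I would use hypothesis \eqref{B3}: given $\eps>0$ it furnishes $\delta>0$ so that $|\lambda'-\lambda|<\delta$ forces
\[
\big\|U(\tau,t)-U(\tau,s)\big\|_n\le\eps\,|\gamma(t)-\gamma(s)|,
\qquad U(\tau,t):=F(\lambda',x(\tau),t)-F(\lambda,x(\tau),t),
\]
uniformly in $\tau$. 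This is precisely the hypothesis of the second part of Lemma \ref{Ant} with the constant (hence regulated) function $f\equiv\eps$ and the nondecreasing $g=\gamma$, so
\[
\sup_{s\in\OT}\Big\|\int_0^s DU(\tau,t)\Big\|_n\le\int_0^T\eps\,d\gamma(\tau)=\eps\,[\gamma(T)-\gamma(0)].
\]

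Combining the three contributions gives, for $|\lambda'-\lambda|<\delta$,
\[
\|\Phi(\lambda',x')-\Phi(\lambda,x)\|_\infty
 \le\|x'-x\|_\infty+\omega(\|x'-x\|_\infty)\,[h(T)-h(0)]+\eps\,[\gamma(T)-\gamma(0)].
\]
I would then conclude with a standard three-term argument: given $\eta>0$, first choose $\eps$ so that the last term is below $\eta/3$ (which fixes $\delta$ through \eqref{B3}), and then shrink $\|x'-x\|_\infty$ so that the first two terms are each below $\eta/3$, using $\omega(0)=0$ and continuity of $\omega$ at the origin. Since $(\lambda,x)$ was arbitrary, this proves joint continuity of $\Phi$ on $\Lambda\times B(x_0,\rho)$. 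I expect the only genuinely delicate point to be the treatment of the parameter increment: one must recognize that \eqref{B3} is exactly the Stieltjes-type Lipschitz bound required by Lemma \ref{Ant}, and that the smallness is carried by the coefficient $\eps$ rather than by the total variation of $\gamma$; the state increment is routine once Lemma \ref{JA} is applied.
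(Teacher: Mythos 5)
Your proof is correct and takes essentially the same route as the paper's: the same telescoping split of $F(\lambda',x'(\tau),t)-F(\lambda,x(\tau),t)$ into a state increment (handled by Lemma \ref{JA}) and a parameter increment (handled by \eqref{B3} fed into the second part of Lemma \ref{Ant}), followed by the same three-term estimate including the $x'(T)-x(T)$ boundary term. The only cosmetic difference is bookkeeping: the paper anchors the state difference at $\lambda_1$ and the parameter difference at $y$, while you anchor them at $\lambda'$ and $x$, which is the mirror image of the identical argument.
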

\begin{proof}
Let  $(\lambda_1,x),(\lambda_2,y)\in\Lambda\times B(x_0,\rho)$ and $s\in\OT$
be given. Obviously, we have
\begin{equation}\label{aa}
   [\Phi(\lambda_1,x)-\Phi(\lambda_2,y)](s)=
   x(T)-y(T)+\int_0^s DF(\lambda_1,x(\tau),t)-F(\lambda_2,y(\tau),t),
\end{equation}
where
\begin{align*}
  &\int_0^s D[F(\lambda_1,x(\tau),t)-F(\lambda_2,y(\tau),t)]
\\
  &\quad=\int_0^s D[F(\lambda_1,x(\tau),t)-F(\lambda_1,y(\tau),t)]
   +\int_0^s D[F(\lambda_1,y(\tau),t)-F(\lambda_2,y(\tau),t)].
\end{align*}
Furthermore,
\begin{equation}\label{bb}
   \left\|\int_0^s D[F(\lambda_1,x(\tau),t)-F(\lambda_1,y(\tau),t)]\right\|_n
   \le\omega(\|x-y\|_\infty)\,[h(T)-h(0)]
\end{equation}
due to Lemma \ref{JA}.

Now, let $\eps>0$ be given and let $\delta\in(0,\eps)$ be such that \eqref{B3}
is true. Then Lemma \ref{Ant} implies that also the relation
\[
  \left\|\int_0^s D[F(\lambda_1,y(\tau),t)-F(\lambda_2,y(\tau),t)]\right\|_n
  <\eps\,[\gamma(T)-\gamma(0)]
\]
holds whenever $|\lambda_1-\lambda_2|<\delta.$
To summarize, inserting the last relation together with \eqref{bb} into \eqref{aa}
we obtain
\begin{align*}
  \|\Phi(\lambda_1,x)-\Phi(\lambda_2,y)\|_{\infty}
  &\le\|x-y\|_{\infty}+\omega(\|x-y\|_{\infty})\,[h(T)-h(0)]
     +\eps\,[\gamma(T)-\gamma(0)]
\\[1mm]
  &<\eps\,(1+[h(T)-h(0)]+[\gamma(T)-\gamma(0)])
\end{align*}
whenever $\|x-y\|_\infty$ is sufficiently small. In other words, the operator
function $\Phi$ is continuous on $\Lambda\times B(x_0,\rho).$
\end{proof}

\begin{proposition}\label{cont}
Let the assumptions of Proposition \ref{derivative} be satisfied and let
\begin{equation}\label{C3}\hskip-2mm
\left\{\begin{array}{l}
\mbox{there is a~nondecreasing function $\wt{\gamma}:\OT\to\R$ such that for}
\\[1mm]
\mbox{any $\eps>0$ there is a $\delta>0$ such that}
\\[2mm]\displaystyle
   \|F\sp{\,\prime}_{x}(\lambda_1,x,t)-F\sp{\,\prime}_{x}(\lambda_2,x,t)
          -F\sp{\,\prime}_{x}(\lambda_1,x,s)+F\sp{\,\prime}_{x}(\lambda_2,x,s)\|_{n\times n}
   <\eps\,|\wt{\gamma}(t)-\wt{\gamma}(s)|
\\[1mm]\hskip5mm
   \mbox{for\ } x\in\Omega,\,t,s\in\OT\mbox{\ and\ }
   \lambda_1,\lambda_2\,{\in}\,\Lambda
   \mbox{\ such that\ } |\lambda_1-\lambda_2|<\delta.
\end{array}\right.
\end{equation}
Then the operator function $\Phi\sp{\,\prime}_{x}:\Lambda\times
  B(x_0,\rho)\to\mathcal{L}(G\OT)$ is continuous.
\end{proposition}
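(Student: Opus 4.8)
The plan is to imitate, almost line for line, the proof of Proposition~\ref{ucG}, now with $F$ replaced by its derivative $F'_x$, with the role of \eqref{B3} played by \eqref{C3} and that of \eqref{Fa1} played by \eqref{C12}. The single genuinely new feature is that continuity must now be measured in the \emph{operator} norm of $\mathcal{L}(G\OT)$, so I must control $\|[\Phi'_x(\lambda_1,x)-\Phi'_x(\lambda_2,y)]z\|_\infty$ \emph{uniformly} over $z$ in the unit ball of $G\OT$, rather than for a single fixed $z$.

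First I would fix $(\lambda_1,x),(\lambda_2,y)\in\Lambda\times B(x_0,\rho)$, a function $z\in G\OT$ with $\|z\|_\infty\le 1$, and $s\in\OT$. Reading off the explicit formula \eqref{de}, the two boundary terms $z(T)$ cancel in the difference, leaving
\[
 \big([\Phi'_x(\lambda_1,x)-\Phi'_x(\lambda_2,y)]z\big)(s)
 =\int_0^s D\big[\big(F'_x(\lambda_1,x(\tau),t)-F'_x(\lambda_2,y(\tau),t)\big)z(\tau)\big].
\]
This cancellation is precisely what makes the operator-norm estimate feasible: no term proportional to $z(T)$ survives. I would then insert the intermediate matrix $F'_x(\lambda_1,y(\tau),t)$ and split the integrand into the two pieces $\big(F'_x(\lambda_1,x(\tau),t)-F'_x(\lambda_1,y(\tau),t)\big)z(\tau)$ and $\big(F'_x(\lambda_1,y(\tau),t)-F'_x(\lambda_2,y(\tau),t)\big)z(\tau)$, exactly as in \eqref{aa}. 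Existence of all three integrals is already guaranteed by the construction of $\Phi'_x$ in Proposition~\ref{derivative}, so the split is legitimate by linearity of the Kurzweil integral.

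For the first piece I would set $U(\tau,t)$ equal to that product; since its integrand is $\R^n$-valued, condition \eqref{C12} yields $\|U(\tau,t)-U(\tau,\sigma)\|_n\le\wt{\omega}(\|x-y\|_\infty)\,|\wt h(t)-\wt h(\sigma)|\,\|z\|_\infty$, so Lemma~\ref{Ant} (with the regulated factor being the constant $\wt{\omega}(\|x-y\|_\infty)\|z\|_\infty$ and $g=\wt h$) gives the bound $\wt{\omega}(\|x-y\|_\infty)\,[\wt h(T)-\wt h(0)]\,\|z\|_\infty$. For the second piece I would use \eqref{C3}: given $\eps>0$, pick the associated $\delta>0$; then for $|\lambda_1-\lambda_2|<\delta$ the corresponding $V(\tau,t)$ satisfies $\|V(\tau,t)-V(\tau,\sigma)\|_n\le\eps\,|\wt{\gamma}(t)-\wt{\gamma}(\sigma)|\,\|z\|_\infty$, and Lemma~\ref{Ant} bounds this integral by $\eps\,[\wt{\gamma}(T)-\wt{\gamma}(0)]\,\|z\|_\infty$.

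Combining the two estimates, taking the supremum over $s\in\OT$ and then over $\|z\|_\infty\le 1$ produces
\[
 \|\Phi'_x(\lambda_1,x)-\Phi'_x(\lambda_2,y)\|_{\mathcal{L}(G\OT)}
 \le\wt{\omega}(\|x-y\|_\infty)\,[\wt h(T)-\wt h(0)]+\eps\,[\wt{\gamma}(T)-\wt{\gamma}(0)]
\]
whenever $|\lambda_1-\lambda_2|<\delta$. Letting $\|x-y\|_\infty\to 0$ and $|\lambda_1-\lambda_2|\to 0$, and invoking continuity of $\wt{\omega}$ at $0$ together with $\wt{\omega}(0)=0$, the right-hand side tends to $0$, which is the asserted continuity of $\Phi'_x$ on $\Lambda\times B(x_0,\rho)$. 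I expect the only real obstacle to be this passage from the fixed-$z$ (pointwise) bounds to the operator norm; it is resolved by the two structural facts that every estimate above is linear in $\|z\|_\infty$ and that the $z(T)$ boundary terms cancel, so all bounds are uniform over the unit ball of $G\OT$.
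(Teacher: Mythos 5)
Your proof is correct and follows exactly the route the paper intends: its own proof of Proposition \ref{cont} is the one-line remark that one argues ``quite analogously'' to Proposition \ref{ucG}, replacing $\Phi(\lambda,x)$ and $F(\lambda,x(\tau),t)$ by $\Phi\sp{\,\prime}_x(\lambda,x)\,z$ and $F\sp{\,\prime}_x(\lambda,x(\tau),t)\,z(\tau)$, i.e.\ the same split into a $\wt{\omega}$-term controlled via \eqref{C12} and a $\lambda$-term controlled via \eqref{C3} and Lemma \ref{Ant}. Your write-up in fact makes explicit the one point the paper glosses over --- that the cancellation of $z(T)$ and the linearity of all bounds in $\|z\|_\infty$ upgrade the pointwise estimates to the operator norm of $\mathcal{L}(G\OT)$ --- so nothing is missing.
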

\noindent
{\it Proof} \ is quite analogous to that of Proposition \ref{ucG},
only instead of $\Phi(\lambda,x)$ and $F(\lambda,x(\tau),t)$ we
should respectively deal with $\Phi\sp{\,\prime}_{x}(\lambda,x)\,z$
and $F\sp{\,\prime}_{x}(\lambda,x(\tau,t)\,z(\tau),$ where
$z\in G\OT.$\ $\square$

\begin{theorem}\label{der}
Let \eqref{B3} and all the assumptions of Proposition \ref{cont}
be satisfied, let $\lambda_0\,{\in}\Lambda$ be given and let
$Id-\Phi\sp{\,\prime}_x(\lambda_0,x_0)$ be an isomorphism of
\ $G\OT$ onto $G\OT.$ Then there is $\delta>0$ such that $(x,\lambda)$
is not a~bifurcation point of the equation $\Phi(\lambda,x)=x$
whenever $\|x-x_0\|_\infty+|\lambda-\lambda_0|<\delta.$
\end{theorem}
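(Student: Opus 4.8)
The plan is to deduce the statement from the Implicit Function Theorem in the Banach space $G\OT$ together with Corollary \ref{cor-def}. Introduce $G(\lambda,x)=x-\Phi(\lambda,x)$ for $(\lambda,x)\in\Lambda\times B(x_0,\rho)$, so that, by Proposition \ref{equiv}, the couple $(x,\lambda)$ solves $\Phi(\lambda,x)=x$ exactly when $G(\lambda,x)=0$. The crucial structural fact I would record first is that $x_0$ is a solution for \emph{every} value of the parameter: by \eqref{B2} the couple $(x_0,\lambda)$ solves \eqref{GDEper} for each $\lambda\in\Lambda$, whence Proposition \ref{equiv} gives $\Phi(\lambda,x_0)=x_0$, that is, $G(\lambda,x_0)=0$ for all $\lambda\in\Lambda$. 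This trivial branch is what will pin down the branch produced by the implicit function argument.

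Next I would verify the hypotheses of the Implicit Function Theorem at the zero $(\lambda_0,x_0)$ of $G$. Since \eqref{B3} and all the assumptions of Proposition \ref{cont} are assumed, Proposition \ref{ucG} shows that $\Phi$, and hence $G$, is continuous on $\Lambda\times B(x_0,\rho)$; Proposition \ref{derivative} provides the partial derivative $\Phi'_x$ with the explicit form \eqref{de}; and Proposition \ref{cont} shows that $\Phi'_x$, and hence $G'_x=Id-\Phi'_x$, is continuous on $\Lambda\times B(x_0,\rho)$. Combined with the hypothesis that $G'_x(\lambda_0,x_0)=Id-\Phi'_x(\lambda_0,x_0)$ is an isomorphism of $G\OT$ onto $G\OT$, these are precisely the ingredients needed to apply the theorem.

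I would then invoke the Implicit Function Theorem to obtain neighborhoods $V$ of $\lambda_0$ and $U$ of $x_0$ and a unique continuous map $\xi\colon V\to U$ with $\xi(\lambda_0)=x_0$ such that, for $(\lambda,x)\in V\times U$, the relation $G(\lambda,x)=0$ holds if and only if $x=\xi(\lambda)$. Since $G(\lambda,x_0)=0$ for every $\lambda$, the uniqueness clause forces $\xi(\lambda)\equiv x_0$ on $V$; hence every solution $(x,\lambda)$ of $\Phi(\lambda,x)=x$ lying in the neighborhood $U\times V$ of $(x_0,\lambda_0)$ satisfies $x=x_0$. By the observation preceding Corollary \ref{cor-def}, this means exactly that $(x_0,\lambda_0)$ is not a bifurcation point. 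Applying Corollary \ref{cor-def} then yields a $\delta>0$ for which $B((x_0,\lambda_0),\delta)$ contains no bifurcation point, which, since a non-solution is trivially not a bifurcation point, is the stated conclusion.

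The step I expect to demand the most care is checking that the Implicit Function Theorem applies in the form needed, namely the Hildebrandt--Graves version requiring joint continuity of $G$ and continuity of the partial derivative $G'_x$ throughout a neighborhood of $(\lambda_0,x_0)$ rather than at the base point alone. It is exactly the technical Propositions \ref{ucG} and \ref{cont}, resting on the uniformity conditions \eqref{B3} and \eqref{C3}, that furnish this, which strongly suggests these propositions were set up with the present theorem in mind. Once the hypotheses are secured, the identification of the implicit branch with the constant solution $x_0$ and the passage to a full neighborhood via Corollary \ref{cor-def} are routine.
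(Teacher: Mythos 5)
Your proposal is correct and follows essentially the same route as the paper's own proof: both verify the continuity hypotheses via Propositions \ref{derivative}, \ref{ucG} and \ref{cont}, apply the Implicit Function Theorem at $(\lambda_0,x_0)$, use the trivial branch $\Phi(\lambda,x_0)=x_0$ (from \eqref{B2}) together with local uniqueness to conclude that $(x_0,\lambda_0)$ is not a bifurcation point, and finish with Corollary \ref{cor-def}. The only difference is presentational — you make the map $G(\lambda,x)=x-\Phi(\lambda,x)$ and the implicit branch $\xi$ explicit, which the paper leaves implicit.
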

\begin{proof}
First, recall that, according to Propositions \ref{derivative},
\ref{ucG} and \ref{cont}, the operator function $\Phi(\lambda,\cdot)$
is continuous together with its derivative
$\Phi\sp{\,\prime}_x(\lambda,x)\in\mathcal{L}(G\OT)$ on
$\Lambda\times B(x_0,\rho).$ Further, by \eqref{B2} we have
\begin{equation}\label{*}
  x_0=\Phi(\lambda,x_0) \quad\mbox{for all \ } \lambda\in\Lambda.
\end{equation}

Let $Id-\Phi\sp{\,\prime}_x(\lambda_0,x_0)$ be an isomorphism of
$G\OT$ onto $G\OT.$ By the Implicit Function Theorem (see e.g.
\cite[Theorem 4.2.1]{DM}) this means that there exist neighborhoods
$\mathcal{V}\subset\Lambda$ of $\lambda_0$ and $\mathcal{W}\subset B(x_0,\rho)$
of $x_0$ such that for any $\lambda\in\mathcal{V}$ there is a~unique
$x\in\mathcal{W}$ such that \ $x=\Phi(\lambda,x).$ However, this together
with \eqref{*} implies that $x=x_0$ has to be the only function satisfying
the relations
\[
   x=\Phi(\lambda,x)
   \quad\mbox{for any \ } \lambda\in\mathcal{V}\subset\Lambda.
\]
Hence, according to Definition \ref{bif}, $(x_0,\lambda_0)$ is not
a~bifurcation point of the equation $x\,{=}\,\Phi(\lambda,x).$
The proof will be completed by using Corollary \ref{cor-def}.
\end{proof}

Next assertion provides a related Fredholm Alternative type result.

\begin{theorem}\label{Alt}
Let the assumptions of Proposition \ref{derivative} be satisfied and
$x_0\in G\OT$ is given. Then, either
\begin{itemize}
\item[{\rm(i)}] the equation\vskip-6mm
\begin{equation*}\label{200}
   z(s)-z(T)-\int_0^s D[F\sp{\,\prime}_{x}(\lambda_0,x_0,t)\,z(\tau)]=q(s)
   \quad\mbox{for\ } s\in\OT
\end{equation*}\vskip-3mm
has a~unique solution in $G\OT$ for every $q\in G\OT;$\vskip-2mm
\hskip-8mm or
\item[{\rm(ii)}] the corresponding homogeneous equation\vskip-6mm
\begin{equation*}\label{3.12}
  z(s)-z(T)-\int_0^s D[F\sp{\,\prime}_{x}(\lambda_0,x_0,t)\,z(\tau)]=0
  \quad\mbox{for\ } s\in\OT
\end{equation*}\vskip-2mm
has at least one nontrivial solution in $G\OT.$
\end{itemize}
\end{theorem}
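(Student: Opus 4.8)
The plan is to recognize the left-hand sides of both equations as a single bounded linear operator $Id-\Phi\sp{\,\prime}_x(\lambda_0,x_0)$ acting on $G\OT$, and then to invoke the Riesz--Schauder theory: for a compact linear operator $K$ on a Banach space, $Id-K$ is injective if and only if it is surjective, and this very dichotomy is precisely the asserted alternative. By Proposition \ref{derivative} and formula \eqref{de}, the operator
\[
  (Lz)(s)=z(T)+\int_0^s D[F\sp{\,\prime}_{x}(\lambda_0,x_0(\tau),t)\,z(\tau)],
  \quad s\in\OT,
\]
coincides with $\Phi\sp{\,\prime}_x(\lambda_0,x_0)$, so that assertions {\rm(i)} and {\rm(ii)} read respectively as: $Id-L$ is a bijection of $G\OT$ onto $G\OT$, or $\ker(Id-L)\ne\{0\}$. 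Thus the whole statement reduces to showing that $L$ is a \emph{compact} linear operator on $G\OT$.

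Linearity and boundedness of $L$ are already contained in Proposition \ref{derivative}. For compactness I would split $L=L_1+L_2$, where $(L_1z)(s)\equiv z(T)$ is the constant function with value $z(T)$, and $L_2z=\int_0^{\cdot} D[F\sp{\,\prime}_{x}(\lambda_0,x_0(\tau),t)\,z(\tau)]$. The operator $L_1$ maps $G\OT$ into the $n$-dimensional subspace of constant functions, hence is of finite rank and therefore compact. For $L_2$, abbreviate $A(\tau,t)=F\sp{\,\prime}_{x}(\lambda_0,x_0(\tau),t)$; by \eqref{C11} we have $\|A(\tau,t)-A(\tau,s)\|_{n\times n}\le|\wt{h}(t)-\wt{h}(s)|$, so, writing $U(\tau,t)=A(\tau,t)\,z(\tau)$ and applying the second part of Lemma \ref{Ant} with the majorant $|f(\tau)|\equiv\|z\|_\infty$ and $g=\wt{h}$, I obtain for any $z$ with $\|z\|_\infty\le M$ and any $s_1\le s_2$ the increment estimate
\[
  \|(L_2z)(s_2)-(L_2z)(s_1)\|_n\le M\,(\wt{h}(s_2)-\wt{h}(s_1)).
\]
Consequently the family $\mathcal{A}_M=\{L_2z:\|z\|_\infty\le M\}$ is uniformly bounded and has variation bounded by $M\,(\wt{h}(T)-\wt{h}(0))$.

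The crucial step is to upgrade this to relative compactness in the supremal norm. Since $\wt{h}$ is nondecreasing, it is regulated, and since each $L_2z$ is regulated by Lemma \ref{rrr}, letting $s_1\to t_0+$ in the increment estimate gives $\|(L_2z)(s_2)-(L_2z)(t_0+)\|_n\le M\,(\wt{h}(s_2)-\wt{h}(t_0+))$; together with the symmetric left-hand estimate this shows that for every $t_0\in\OT$ and every $\eps>0$ there is a $\delta>0$, \emph{independent of $z$}, dominating the one-sided oscillations of $L_2z$ near $t_0$ by $\eps$. In other words, $\mathcal{A}_M$ is equiregulated. By an Arzel\`a--Ascoli type compactness criterion for regulated functions (Fra\v{n}kov\'a's theorem), a pointwise bounded equiregulated family is relatively compact in $G\OT$, so $L_2$, and hence $L=L_1+L_2$, is compact. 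The Fredholm Alternative for $Id-L$ with $L$ compact then yields exactly the dichotomy {\rm(i)}--{\rm(ii)}. I expect the verification of equiregulatedness, and the identification of the correct compactness theorem on $G\OT$, to be the main obstacle; everything else is bookkeeping, the decisive point being that the single nondecreasing function $\wt{h}$ dominates the oscillation of every element of $\mathcal{A}_M$ uniformly.
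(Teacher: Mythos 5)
Your proposal is correct and follows essentially the same route as the paper: the paper likewise reduces everything to compactness of $\Phi\sp{\,\prime}_x(\lambda_0,x_0)$ on $G\OT$, obtains the same increment estimate $c\,|\wt{h}(r')-\wt{h}(r)|$ from \eqref{C1} and Lemma \ref{Ant} (the constant term $z(T)$ cancels in differences, so your splitting $L=L_1+L_2$ is harmless but unnecessary), invokes Fra\v{n}kov\'a's compactness criterion for regulated functions, and concludes with the Fredholm Alternative for compact operators.
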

\begin{proof}
Let $\Phi$ be defined by \eqref{operador} and let $\rho>0$ be given by
\eqref{B2}. Let $\lambda\in\Lambda$ and $x\in B(x_0,\rho)$ be given.
By Proposition \ref{derivative}, we have
\[
  \big(\Phi\sp{\,\prime}_x(\lambda,x)\,z\big)(r)
  =z(0)+\int_0^r D[F\sp{\,\prime}_{x}(\lambda,x,t)\,z(\tau)]
  \quad\mbox{for \ } z\in G\OT \mbox{\ and\ } r\in \OT.
\]
We assert that $\Phi\sp{\,\prime}_x(\lambda,x)$ is a~compact operator
on $G\OT.$ Indeed,
it is linear and bounded as it was shown in the beginning of the proof
of Proposition \ref{derivative}. Hence, it remains to show that it maps
bounded subsets of $G\OT$ onto relatively compact subsets of $G\OT.$

Let $M\subset G\OT$ be bounded and let $c>0$ be such that $\|z\|_{\infty}\le c$ for all
$z\in M.$  Making use of \eqref{C1} and Lemma \ref{Ant},
we get
\begin{align*}
   &\|(\Phi\sp{\,\prime}_x(\lambda,x)\,z)(r')-(\Phi\sp{\,\prime}_x(\lambda,x)\,z)(r)\|_\infty
  \\[2mm]&\quad
  =\left\|\int_{r}^{r'} D[F\sp{\,\prime}_{x}(\lambda,x,t)\,z(\tau)]\right\|_n
  \le\int_{\min\{r,r'\}}^{\max\{r,r'\}}\|z(\tau)\|_n\,d\wt{h}(\tau)
  \le c\,|\wt{h}(r')-\wt{h}(r)|
\end{align*}
for all $r,r'\in\OT$ and $z\in M.$ By \cite[Theorem 2.17]{dana}
(cf. also \cite[Corollary 4.3.8]{MST}), the set $\{\Phi\sp{\,\prime}_x(\lambda,x)\,z):z\in M\}$
is relatively compact. This proves our claim.

Therefore, using the Fredholm Alternative for Banach spaces (see e.g.
\cite[Theorem 4.12]{Sche}), we have that either the range
$\mathcal{R}(Id-\Phi\sp{\,\prime}_x(\lambda,x))$ of the operator $Id-\Phi\sp{\,\prime}_x(\lambda,x)$
is the whole $G\OT$ and its null space $\mathcal{N}(Id-\Phi\sp{\,\prime}_x(\lambda,x))=\{0\}$
or $\mathcal{R}(Id-\Phi\sp{\,\prime}_x(\lambda,x))\ne G\OT$ and
$\mathcal{N}(Id-\Phi\sp{\,\prime}_x(\lambda,x))\ne\{0\}.$ This completes the proof.
\end{proof}

Now, we can reformulate conditions necessary for $(\lambda_0,x_0)$ to be
a~bifurcation point of the equation $\Phi(\lambda,x)=x$ as follows:

\begin{theorem}\label{NC}
Suppose that the assumptions of Theorem \ref{der} are satisfied and let
$\lambda_0\,{\in}\,\Lambda$ and $x_0\,{\in}\,B(x_0,\rho)$ be given.
Then, $(x_0,\lambda_0)$ is a~bifurcation point of the equation
$\Phi(\lambda,x)\,{=}\,x$ only if there exists $q\in G\OT$ such that
the equation
\begin{equation}\label{11}
   z(s)-z(T)-\int_0^s D[F\sp{\,\prime}_{x}(\lambda_0,x_0,t)\,z(\tau)]=q(s)
   \quad\mbox{for \ } s\in\OT
\end{equation}
has no solution in $G\OT$ and the corresponding homogeneous equation
\[
   z(s)-z(T)-\int_0^s D[F\sp{\,\prime}_{x}(\lambda_0,x_0,t)z(\tau)]=0
   \quad\mbox{for \ } s\in\OT
\]
possesses at least one nontrivial solution in $G\OT.$
\end{theorem}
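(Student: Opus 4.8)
The plan is to argue by contraposition, combining Theorem~\ref{der} with the Fredholm Alternative already established in Theorem~\ref{Alt}. First I would record the identification step: by Proposition~\ref{derivative}, the left-hand side of \eqref{11} is exactly $\big(Id-\Phi\sp{\,\prime}_x(\lambda_0,x_0)\big)\,z$, so that \eqref{11} reads $\big(Id-\Phi\sp{\,\prime}_x(\lambda_0,x_0)\big)\,z=q$ in $G\OT$, and the associated homogeneous equation is $\big(Id-\Phi\sp{\,\prime}_x(\lambda_0,x_0)\big)\,z=0$. Thus the two displayed equations in the statement concern, respectively, the range and the kernel of one and the same bounded linear operator on $G\OT$.

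Next I would invoke the contrapositive of Theorem~\ref{der}. Since $(x_0,\lambda_0)$ is assumed to be a bifurcation point of $\Phi(\lambda,x)=x$, Theorem~\ref{der} forbids $Id-\Phi\sp{\,\prime}_x(\lambda_0,x_0)$ from being an isomorphism of $G\OT$ onto $G\OT$; otherwise $(x_0,\lambda_0)$ would fail to be a bifurcation point. Hence $Id-\Phi\sp{\,\prime}_x(\lambda_0,x_0)$ is not bijective.

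Finally I would bring in Theorem~\ref{Alt}, which applies because the hypotheses of Proposition~\ref{derivative} are in force (so $\Phi\sp{\,\prime}_x(\lambda_0,x_0)$ is compact, as shown in the proof of Theorem~\ref{Alt}). Theorem~\ref{Alt} gives the exclusive dichotomy: either \eqref{11} has a unique solution for every $q\in G\OT$ (equivalently, $Id-\Phi\sp{\,\prime}_x(\lambda_0,x_0)$ is bijective), or the homogeneous equation admits a nontrivial solution and the range of $Id-\Phi\sp{\,\prime}_x(\lambda_0,x_0)$ is a proper subspace of $G\OT$. Since the operator is not bijective, the first alternative is excluded, so the second holds: the homogeneous equation has a nontrivial solution in $G\OT$, and because the range is proper there exists $q\in G\OT$ lying outside it, i.e. \eqref{11} has no solution for that $q$. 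This is precisely the asserted necessary condition.

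The main obstacle will be the identification between the two theorems: being careful that ``$Id-\Phi\sp{\,\prime}_x(\lambda_0,x_0)$ is an isomorphism'', as used in Theorem~\ref{der}, coincides with alternative~(i) of Theorem~\ref{Alt}. This rests on the fact that for $Id-K$ with $K$ compact, injectivity, surjectivity, and bijectivity are all equivalent, and a continuous linear bijection of a Banach space onto itself has continuous inverse by the open mapping theorem; so bijectivity alone already yields an isomorphism. With this equivalence in hand, everything else is a routine chaining of the two results.
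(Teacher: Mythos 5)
Your proof is correct and follows essentially the same route as the paper's own argument: the contrapositive of Theorem~\ref{der} shows $Id-\Phi\sp{\,\prime}_x(\lambda_0,x_0)$ cannot be an isomorphism, and then the exclusive dichotomy of Theorem~\ref{Alt} forces both the proper range (hence a $q$ for which \eqref{11} is unsolvable) and the nontrivial kernel. The only difference is that you make explicit two steps the paper leaves implicit, namely the identification of \eqref{11} with the operator equation $\big(Id-\Phi\sp{\,\prime}_x(\lambda_0,x_0)\big)z=q$ and the open-mapping-theorem point that, for this compact perturbation of the identity, bijectivity already yields an isomorphism.
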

\begin{proof}
Suppose $(x_0,\lambda_0)$ is a~bifurcation point of the equation
$\Phi(\lambda,x)\,{=}\,x.$ Then, by Theorem \ref{der}, the operator
$Id-\Phi\sp{\,\prime}_x(\lambda_0,x_0):G\OT\to G\OT$ can not be an
isomorphism. Therefore, using Theorem \ref{der} and Fredholm type
Alternative~\ref{Alt}, we conclude that
$\mathcal{R}(Id-\Phi\sp{\,\prime}_x(\lambda_0,x_0))\ne G\OT$ and
$\mathcal{N}(Id-\Phi\sp{\,\prime}_x(\lambda_0,x_0))\ne\{0\}.$ Our
statement follows immediately.
\end{proof}

\begin{remark}
Notice that \eqref{11} is the periodic problem for a nonhomogeneous generalized
linear differential equation.
\end{remark}

\section{Measure Differential Equations}

Main topic of this paper are measure differential equations of the form
\begin{equation}\label{MDE}
    D x=f(\lambda,x,t)+g(x,t)\,.\,D h,
\end{equation}
where
\begin{equation}\label{A1}
\left\{\begin{array}{l}
  \Omega\subset\R^n\mbox{\ and \ } \Lambda\subset\R
  \mbox{ \ are open sets};
  \\[2mm]
  f:\Lambda\times\Omega\times\OT\to\R^n,\, g:\Omega\times\OT\to\R^n;
  \\[2mm]
  h:(-\infty,T]\to\R \mbox{\ is left-continuous and has a bounded}
  \\[1mm]
  \mbox{variation on\ } \OT
  \mbox{\ and\ } h(t){=}\,h(0) \mbox{ \ for\ } t<0;
  \\[2mm]
  x:\OT\to\R^n;\,
  D x \mbox{\ is the (Schwartz) distributional derivative of\ } x;
  \\[1mm]
  D h \mbox{\ is the (Schwartz) distributional derivative of\ } h.
 \end{array}\right.
\end{equation}

It is well known that such kind of differential equations, usually
called distributional or measure, encompass many types of equations
such as ordinary differential equations, impulsive differential
equations, dynamic equations on time scales and others.

\begin{remark}\label{R1}{\bf (Distributions.)}
By distributions we understand linear continuous func\-tio\-nals on
the topological vector space $\mathcal{D}$ of functions
$\varphi:\R\to\R$ possessing for any $j\in N\cup\{0\}$ a derivative
$\varphi^{(j)}$ of the order $j$ which is continuous on $\R$ and such
that $\varphi^{(j)}(t)=0$ if $t\notin (0,T)$. The space $\mathcal{D}$
is endowed with the topology in which the sequence
$\varphi_k\in\mathcal{D}$ tends to $\varphi_0\in\mathcal{D}$ in
$\mathcal{D}$ if and only if
\[
   \lim_k\|\varphi_k^{(j)}-\varphi_0^{(j)}\|_\infty=0
   \mbox{\ for all non negative integers\ }j.
\]
Similarly, $n$-vector distributions are linear continuous $n$-vector
functionals on the $n$-th cartesian power $\mathcal{D}^n$ of
$\mathcal{D}.$ The space of $n$-vector distributions on $\OT$ (the
dual space to $\mathcal{D}^n$) is denoted by $\mathcal{D}^{n*}.$
Instead of $\mathcal{D}^{1*}$ we write $\mathcal{D}^*.$ Given
a distribution $f\in\mathcal{D}^{n*}$ and a (test) function
$\varphi\in\mathcal{D}^n,$ the value of the functional $f$ on $\varphi$
is denoted by $<f,\varphi>$. Of course, reasonable real valued point
functions are naturally included between distributions. For example,
for a given $f$ Lebesgue integrable on $\OT$ ($f\in L^1\OT$), the
relation
\[
    <f,\varphi>=\int_0^T f(t)\,\varphi(t)\,d t
    \quad\mbox{for \ }\varphi\in\mathcal{D}^n,
\]
(where $f(t)\,\varphi(t)$ stands for the scalar product of $f(t)\in\R^n$
and $\varphi(t)\in\R^n$) defines the $n$-vector distribution on $\OT$
which will be denoted by the same symbol $f.$ As a result, the zero
distribution $0\in\mathcal{D}^{n*}$ on $[0,T]$ can be identified with
an arbitrary measurable function vanishing a.e. on $[0,T]$. Obviously,
if $f\in G\OT$ is left-continuous on $(0,T],$ then
$f=0\in\mathcal{D}^{*n}$ only if $f(t)=0$ for all $t\in\OT.$

Given two distributions $f,g\in\mathcal{D}^{n*},$ $f=g$ means that
$f-g=0\in\mathcal{D}^{n*}.$ Whenever a~relation of the form $f=g$ for
distributions and/or functions $f$ and $g$  occurs in the following
text, it is understood as the equality in the above sense. Given an
arbitrary $f\in\mathcal{D}^{n*},$ the symbol $Df$ denotes its
distributional derivative, i.e.
\[
   <Df,\varphi>=-<f,\varphi'>
   \quad\mbox{for \ } \varphi\in\mathcal{D}^n.
\]
For absolutely continuous functions their distributional derivatives
coincide with their classical derivatives, of course. It is well-known,
cf. \cite[Section 3]{Ha}, that if $f\in\mathcal{D}^*,$ then $Df=0$ if
and only if $f$ is Lebesgue integrable on $\OT$ and there is
a $c_0\in\R$ such that $f(t)=c_0$ a.e. on $\OT.$

For more details on the theory of distributions, see e.g. \cite{FJ},
\cite{Ka}, \cite[Chapter 6]{Ru}, \cite[Section 8.4]{MST}, \cite{T}.
\end{remark}

\begin{definition}\label{D}
By a solution of \eqref{MDE} we understand a couple
$(x,\lambda)\in BV\OT\times\Lambda$ such that $x$ is left-continuous on
$(0,T],$ $x(t)\,{\in}\,\Omega$ for $t\,{\in}\,\OT,$ the distributional
product $\wt{g}_x\,{.}\,D h$ of the function
\[
   \wt{g}_{x}:t\in\OT\to g(x(t),t)\,{\in}\,\R^n
\]
with the distributional derivative $Du$ of $u$ has a~sense and the
equality \eqref{MDE} is satisfied in the distributional sense, i.e.
\[
   <Dx,\varphi>=<\wt{f}_{\lambda,x},\varphi>+<\wt{g}_x\,{.}\,D h,\varphi>
   \quad\mbox{for all \ } \varphi\in\mathcal{D}^n,
\]
where $\wt{f}_{\lambda,x}:t\in\OT\to f(\lambda,x(t),t)\in\R^n.$
\end{definition}

\begin{remark}\label{R2}
According to Definition \ref{D}, to investigate differential equations
like \eqref{MDE}, one should reasonably specify how to understand to
the distributional product $\wt{g}_x\,{.}\,D h,$ symbolically written as
$g(x,t)\,{.}\,D h,$ on the right-hand side of equation \eqref{MDE}. It is known
that in the Schwartz setting it is not possible to define a product
of an arbitrary couple of distributions. In text-books one can find
the trivial example when $f\in\mathcal{D}^*$ and $g:\OT\to\R$ is
infinitely differentiable on $\OT$ and its support is contained in
the open interval $(0,T).$ The product $f.g$ of $f$ and $g$ is in such
a case defined as
\[
   \langle f\,g,\varphi\rangle=\langle f,g\varphi\rangle
   \quad\mbox{for all \ } \varphi\in\mathcal{D}.
\]
Furthermore, if $f,g\in L^1\OT$ are such that $f\,g\in L^1\OT,$ their
distributional product is defined as
\[
   \langle f\,g,\varphi\rangle=\int_0^T f(t)\,g(t)\,\varphi(t)\,d t
   \quad\mbox{for \ } \varphi\in\mathcal{D}^n.
\]
Thus, in this case the distributional product actually coincide with
the usual product of point functions. However, in equation \eqref{MDE}
we have a product of a $n$-vector valued function with the distributional
derivative of a scalar function which is evidently not covered by the
above definitions. The definition of a product of measures and regulated
functions given by Lig\c{e}za in \cite{Li} on the basis of the sequential
approach is unfortunately not suitable for our purposes. As will be seen
below, a good tool in the context of measure differential systems is
provided by the Kurzweil-Stieltjes integral. The following definition has
been introduced in \cite{T}, cf. also \cite[Section 8.4]{MST}.
\end{remark}

\begin{definition}\label{DP}
If $g:\OT\to\R^n$ and $h:\OT\to\R$ are functions defined on $\OT$ and
such that there exists the Kurzweil-Stieltjes integral
$\int_0^T g\,d h,$ then the product of $g$ and $D h$ is the
distributional derivative of the indefinite integral
$H(t):=\int_0^t g\,d h,$ i.e. $g\,{.}\,D h=D H.$
\end{definition}

\begin{remark}\label{R3}
Note that in Definition \ref{DP}, the product $g\,{.}\,D h$ is an $n$-vector
distribution.

Furthermore, it is worth mentioning that the multiplication operation
given by Definition \ref{DP} is associative, distributive and
multiplication by zero element gives zero element. On the other hand,
we should have in mind that (cf. \cite[Remark 4.1]{T} and
\cite[Theorem 6.4.2]{MST}) the expected formula
\[
   D(f\,{.}\,g)=D f\,{.}\,g+f\,{.}\,D g
\]
for the differentiation of the product $f.g$ is not true, in general.
More precisely, using the modified integration-by-parts formula from
\cite[Theorem 6.2]{AST} one can verify that the following relation
holds if $f$ and $g$ are regulated and at least one of them has a bounded
variation
\begin{align*}
   &D(f.g)=D f.g+f\,{.}\,D g + Df.\Delta^+\wt{g}-\Delta^-\wt{f}\,{.}\,Dg,
  \\\noalign{\noindent\mbox{where}}
   &\Delta^+\wt{g}(t)
     =\left\{\begin{array}{ll}
               \Delta^+g(t) &\mbox{if\ } t<T,
              \\
               0            &\mbox{if\ } t=T
       \end{array}\right.
    \quad\mbox{and}\quad
    \Delta^-\wt{f}(t)
     =\left\{\begin{array}{ll}
               0            &\mbox{if\ } t=0,
              \\
               \Delta^-f(t) &\mbox{if\ } t>0.
       \end{array}\right.
\end{align*}
\end{remark}

Together with \eqref{MDE} we will consider the Stieltjes integral equation
\begin{equation}\label{SIE}
   x(t)=x(0)+\int_0^t f(\lambda,x(s),s)\,d s+\int_0^t g(x(s),s)\,d h(s)
   \quad\mbox{for \ } t\in\OT,
\end{equation}
where the integrals stand for the Kurzweil-Stieltjes ones
\footnote{Recall that the Kurzweil-Stieltjes integral with the identity
integrator becomes the Henstock-Kurzweil one.}.

By a solution we understand any function $x:\OT\to\R^n$ such that $x(t)\in\Omega$
for $t\in\OT$ and the equality \eqref{SIE} is true on $\OT.$

\begin{remark}
In the literature one often meets instead of the integral version \eqref{SIE}
of \eqref{GDEper} the integral equation
\begin{equation}\label{Leb}
   x(t)=x(0)+\int_0^t f(\lambda,x(s),s)\,d s+\int_{[0,t)}g(x(s),s)\,d\mu_u,
\end{equation}
where the former integral is the Lebesgue one and the latter is the Lebesgue-Stieltjes
integral. However, it is known, cf. \cite[Theorem 6.12.3]{MST}, that if the
Lebesgue-Stieltjes integral $(LS)\int_{[0,T)}g\,d\mu_u$ exists, then the
Kurzweil-Stieltjes integral $\int_0^T g\,du$ exists as well and
\footnote{Recall that $u$ is left-continuous on $(0,T]$ and $u(0-)=u(0).$}
\[
  \int_0^T g\,du=(LS)\int_{[0,T)} g\,d\mu_u.
\]
Therefore, equation \eqref{Leb} is a special case of \eqref{SIE}.
\end{remark}


\begin{proposition}\label{P1}
Assume that conditions \eqref{A1},
\begin{equation}\label{A2i}
\left\{\begin{array}{l}
  f(\lambda,\cdot,t) \mbox{\ is continuous on\ } \Omega
  \mbox{\ for all\ } t\in\OT \mbox{\ and\ } \lambda\in\Lambda;
  \\[2mm]
  f(\lambda,x,\cdot) \mbox{\ is Lebesgue measurable on\ } \OT
  \mbox{\ for all\ } (\lambda,x)\,{\in}\,\Lambda\times\Omega;
  \\[2mm]
  \mbox{there is a function\ }m\,{:}\,\OT\,{\to}\,[0,\infty)
  \mbox{\ Lebesgue integrable}
  \\
  \mbox{on\ } \OT \mbox{\ and such that}
  \\[2mm]\displaystyle\hskip10mm
    \|f(\lambda,x,t)\|_n\le m(t)
    \mbox{ \ for\ } (\lambda,x,t)\in\Lambda\times\Omega\times\OT
\end{array}\right.
\end{equation}
and
\begin{equation}\label{A2ii}\hskip-18mm
\left\{\begin{array}{l}
  g(\cdot,t) \mbox{\ is continuous on\ } \Omega \mbox{\ for all\ } t\in\OT
  \mbox{\ and there is}
  \\
   \mbox{a function\ }m_u{:}\,\OT\,{\to}\,[0,\infty) \mbox{\ such that}
  \\[1mm]\displaystyle\hskip5mm
    \|g(x,t)\|_n\le m_u(t)
    \mbox{ \ and \ } \int_0^T m_u(t)\,d[\var_0^t u]\,{<}\,\infty
  \\[2mm]
    \mbox{for \ } (x,t)\in\Omega\times\OT.
\end{array}\right.
\end{equation}
are satisfied.

Then any solution $x$ of \eqref{SIE} on $\OT$ is left-continuous on $(0,T]$ and has
a bounded variation on $\OT.$
\end{proposition}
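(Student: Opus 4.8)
The plan is to split the solution into its Lebesgue and Kurzweil--Stieltjes parts and treat each separately. Since $(x,\lambda)$ is a solution of \eqref{SIE}, both integrals on the right-hand side exist, and I would write
\[
   x(t)=x(0)+F(t)+G(t),\quad\mbox{where \ }
   F(t)=\int_0^t f(\lambda,x(s),s)\,d s,\ \ G(t)=\int_0^t g(x(s),s)\,d h(s).
\]
Because $x(0)$ is constant, it suffices to prove that $F$ is continuous and of bounded variation and that $G$ is left-continuous and of bounded variation; the two claims for $x$ then follow by addition.

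For the Lebesgue part $F$, condition \eqref{A2i} gives $\|f(\lambda,x(s),s)\|_n\le m(s)$ with $m\in L^1\OT$, whence
\[
   \|F(t_2)-F(t_1)\|_n\le\int_{t_1}^{t_2}m(s)\,d s
   \quad\mbox{for all \ } t_1\le t_2\mbox{\ in\ }\OT.
\]
This shows that $F$ is absolutely continuous on $\OT$; in particular $F$ is continuous (so $\Delta^-F(t)=0$ for every $t\in(0,T]$) and $\var_0^T F\le\int_0^T m(s)\,d s<\infty$.

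For the bounded variation of the Stieltjes part $G$, I would invoke the standard majorization estimate for the Kurzweil--Stieltjes integral (cf.~\cite{MST}) together with the bound $\|g(x(s),s)\|_n\le m_u(s)$ from \eqref{A2ii} to get
\[
   \|G(t_2)-G(t_1)\|_n=\left\|\int_{t_1}^{t_2}\wt{g}_x\,d h\right\|_n
   \le\int_{t_1}^{t_2}m_u(s)\,d[\var_0^s h]
   \quad\mbox{for \ } t_1\le t_2.
\]
Summing these inequalities over an arbitrary partition $0=s_0<s_1<\cdots<s_k=T$ and using the additivity of the Kurzweil--Stieltjes integral over adjacent subintervals, the right-hand sides add up to $\int_0^T m_u(s)\,d[\var_0^s h]$, which is finite by \eqref{A2ii}. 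Hence $\var_0^T G\le\int_0^T m_u\,d[\var_0^s h]<\infty$, so $G\in BV\OT$ and therefore $x\in BV\OT$.

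It remains to establish the left-continuity of $G$, for which I would apply Lemma~\ref{Ant} to $U(\tau,t)=\wt{g}_x(\tau)\,h(t)$. This $U$ is Kurzweil integrable on $\OT$, since it reduces to the existing integral $\int_0^s\wt{g}_x\,d h$, and it is regulated in its second variable because $h$, being of bounded variation, is regulated. Lemma~\ref{Ant} then yields
\[
   \Delta^-G(t)=U(t,t)-U(t,t-)=\wt{g}_x(t)\,[h(t)-h(t-)]=\wt{g}_x(t)\,\Delta^-h(t)
   \quad\mbox{for \ } t\in(0,T].
\]
As $h$ is left-continuous on $(0,T]$ by \eqref{A1}, we have $\Delta^-h(t)=0$, so $\Delta^-G(t)=0$; combined with $\Delta^-F(t)=0$ this gives $\Delta^-x(t)=0$ on $(0,T]$, i.e.\ $x$ is left-continuous. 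The main obstacle I anticipate is the bounded-variation estimate for $G$: since $h$ is merely of bounded variation and not monotone, one must pass to the nondecreasing variation function $\var_0^\cdot h$ and carefully justify the domination $\|\int\wt{g}_x\,d h\|_n\le\int m_u\,d[\var_0^\cdot h]$ (note that the integrand $\wt{g}_x$ need not be regulated, so the second part of Lemma~\ref{Ant} does not apply directly); its right-hand side is finite precisely because of the integrability hypothesis $\int_0^T m_u\,d[\var_0^t h]<\infty$ built into \eqref{A2ii}.
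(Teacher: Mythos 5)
Your proof is correct and takes essentially the same route as the paper's: the same decomposition into the absolutely continuous Lebesgue part and the Kurzweil--Stieltjes part, the same majorization-and-summation argument (via the domination $\|g(x(s),s)\|_n\le m_u(s)$ and the integrator $\var_0^s h$) for bounded variation, and the same jump-formula argument for left-continuity. The only cosmetic difference is that where you derive $\Delta^-G(t)=\wt{g}_x(t)\,\Delta^-h(t)=0$ from Lemma~\ref{Ant}, the paper cites the corresponding result from the literature directly, and it likewise covers your anticipated ``obstacle'' (existence of $\int\|g(x(s),s)\|_n\,d[\var_0^s h]$ for a non-regulated integrand) by a citation rather than by proof.
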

\begin{proof}
Let $x$ be a solution of \eqref{SIE}. Then $x(t)\in\Omega$ for all $t\in\OT$ and both
integrals on the right hand side of \eqref{SIE} have a sense for all $t\in\OT.$
Due to the condition \eqref{D1i}, the integral $\int_0^T f(\lambda,x(s),s)\,d s$
exists as the Lebesgue one and as a result the corresponding indefinite integral
is absolutely continuous on $\OT.$

Furthermore, denote
\[
   H(t):=\int_0^t g(x(s),s)\,d h(s) \quad\mbox{for \ } t\in\OT.
\]
By \cite[Corollary 6.5.5]{MST}, $H$ is left-continuous on $(0,T].$ Furthermore,
due to \eqref{A2ii} and \cite[Theorem 6.7.4]{MST}, the integral
\ $\int_c^d\|g(x(s),s)\|_n\,d[\var_0^s\,h]$ \ exists for each $[c,d]\subset\OT.$
Consequently, for an arbitrary division $\{\alpha_0,\alpha_1,\dots,\alpha_m\}$
of $\OT$ we get
\begin{align*}
	&\sum_{j=1}^m\|H(\alpha_j)-H(\alpha_{j-1})\|_n
	\le\sum_{j=1}^m\int_{\alpha_{j-1}}^{\alpha_j}\|g(x(s),s)\|_n\, d[\var_0^s\,h]
   \\
    &\quad\le\int_0^T m_u(s)\,d[\var_0^s\,h]<\infty,
\end{align*}
i.e. $H$ has a bounded variation on $\OT.$ This completes the proof.
\end{proof}

\begin{theorem}\label{T1}
Let conditions \eqref{A1}, \eqref{A2i} and \eqref{A2ii} be satisfied.
Then $x\,{\in}\,G\OT$ is a solution of \eqref{MDE} on $\OT$ if and only if
it is a solution to \eqref{SIE}.
\end{theorem}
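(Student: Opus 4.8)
The plan is to establish the equivalence between the distributional formulation \eqref{MDE} and the integral formulation \eqref{SIE} by showing that both are equivalent to a single statement about distributional derivatives. The key bridge is Definition \ref{DP}, which interprets the product $\wt{g}_x\,{.}\,D h$ precisely as the distributional derivative $D H$ of the indefinite Kurzweil--Stieltjes integral $H(t)=\int_0^t g(x(s),s)\,d h(s)$. First I would assume $x\in G\OT$ is a solution of \eqref{SIE}; under hypotheses \eqref{A1}, \eqref{A2i}, \eqref{A2ii}, Proposition \ref{P1} guarantees that $x\in BV\OT$ and is left-continuous on $(0,T]$, so $x$ has the regularity demanded by Definition \ref{D}, and the integral $H$ is well defined and of bounded variation. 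The substance of the forward direction is then to differentiate \eqref{SIE} in the distributional sense: the term $\int_0^t f(\lambda,x(s),s)\,d s$ is absolutely continuous, so its distributional derivative coincides with the classical integrand $\wt{f}_{\lambda,x}$, while the distributional derivative of $H$ is exactly $\wt{g}_x\,{.}\,D h$ by Definition \ref{DP}. Since the constant $x(0)$ differentiates to zero, applying $D$ to \eqref{SIE} yields precisely $Dx=\wt{f}_{\lambda,x}+\wt{g}_x\,{.}\,D h$, which is the distributional identity in Definition \ref{D}.

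For the converse, I would start from a solution $(x,\lambda)\in BV\OT\times\Lambda$ of \eqref{MDE} in the sense of Definition \ref{D}, so that the product $\wt{g}_x\,{.}\,D h$ has a sense and $Dx=\wt{f}_{\lambda,x}+\wt{g}_x\,{.}\,D h$ as distributions. The idea is to integrate this identity. Define the candidate right-hand side $y(t)=x(0)+\int_0^t f(\lambda,x(s),s)\,d s+\int_0^t g(x(s),s)\,d h(s)$; by the forward computation just described, $Dy=\wt{f}_{\lambda,x}+\wt{g}_x\,{.}\,D h=Dx$, so $D(x-y)=0\in\mathcal{D}^{n*}$. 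By the characterization of distributions with vanishing derivative recalled in Remark \ref{R1} (via \cite[Section 3]{Ha}), $x-y$ must equal a constant $c_0\in\R^n$ a.e. on $\OT$. Since both $x$ and $y$ are left-continuous on $(0,T]$ (the function $y$ inheriting left-continuity from Proposition \ref{P1} and the absolute continuity of the Lebesgue term), and both agree at $t=0$ by construction, the constant $c_0$ must be zero; the left-continuity upgrades the almost-everywhere equality to equality at every point of $\OT$, exactly as noted in Remark \ref{R1} for left-continuous regulated functions. Hence $x=y$ on $\OT$, which is \eqref{SIE}.

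The main obstacle is the careful handling of the pointwise-versus-almost-everywhere distinction in the converse direction. The distributional equation only determines $x$ up to an additive constant and only modulo null sets, so the argument genuinely relies on the left-continuity of $x$ assumed in Definition \ref{D} together with the left-continuity of the indefinite Kurzweil--Stieltjes integral $H$ (guaranteed by \cite[Corollary 6.5.5]{MST} under the left-continuity of $h$). I would want to verify explicitly that both candidate functions belong to the same distinguished representative class of their a.e.-equivalence class, since it is precisely the observation in Remark \ref{R1}---that a left-continuous regulated function equal to $0\in\mathcal{D}^{n*}$ vanishes identically---that rules out any discrepancy at the finitely or countably many jump points where $D h$ is concentrated. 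The normalization $x(0)=y(0)$ then pins down the free constant, completing the identification.
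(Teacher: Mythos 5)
Your proposal is correct and takes essentially the same route as the paper's own proof: the forward direction is exactly the paper's appeal to Proposition \ref{P1} and Definition \ref{DP} (distributional differentiation of \eqref{SIE}), and the converse reproduces the paper's argument of forming the primitive $F_{\lambda}(x)$, observing $D\big(x-F_{\lambda}(x)\big)=0\in\mathcal{D}^{n*}$, invoking the characterization of distributions with vanishing derivative from \cite[Section 3]{Ha} as recalled in Remark \ref{R1}, and using left-continuity on $(0,T]$ together with the normalization at $t=0$ to identify the constant as $x(0)$. The only cosmetic difference is that you absorb the constant $x(0)$ into an auxiliary function $y$ from the start, whereas the paper determines the constant $c$ at the end; both arguments rest on exactly the same ingredients (Proposition \ref{P1}, Definition \ref{DP}, and the Halperin characterization).
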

\begin{proof}
If $x$ is a solution to \eqref{SIE}, then it is a solution to \eqref{MDE} on $\OT$
thanks to Proposition \ref{P1} and Definition \ref{DP}.

On the other hand, let $x$ be a solution of \eqref{MDE}. By Definition \ref{D},
$x$ is left-continuous on $(0,T],$ has a bounded variation on $\OT$ and
$x(t)\in\Omega$ for all $t\in\OT.$ Furthermore, by definition \ref{DP},
\begin{gather*}
   D\big(x-F_{\lambda}(x)\big)=0\in\mathcal{D}^{n*},
\\\noalign{\noindent\mbox{where}}
   F_{\lambda}(x):t\in\OT\to
   \int_0^t f(\lambda,x(s),s)\,d s+\int_0^t g(x(s),s)\,d h(s)\in\R^n
   \mbox{\ for\ } \lambda\in\Lambda.
\end{gather*}
By the proof of Proposition \ref{P1} $F_{\lambda}(x)$ has a bounded variation
on $\OT$ and is left-continuous on $(0,T]$ for all $\lambda\in\Lambda.$
By \cite[Section 3]{Ha} this means that there is $c\in\R^n$ such that
$x(t)-F_{\lambda}(x)(t)=c$ for all $\lambda\in\Lambda$ and $t\in\OT.$ As
a result, $c=x(0)$ and $x$ is a solution to \eqref{SIE}.
\end{proof}

Let us consider the functions $F_1,$ $F_2$ and $F$ given for
$(\lambda,x,t)\in\Lambda\times\Omega\times\OT$ by the relations
\begin{equation}\label{F}\hskip-7mm
\left\{\begin{array}{l}\displaystyle
   F_1(\lambda,x,t)=\int_0^t f(\lambda,x,s)\,d s,\quad
   F_2(x,t)=\int_0^t g(x,s)\,d h(s),
  \\[4mm]\displaystyle
   F(\lambda,x,t)=F_1(\lambda,x,t)+F_2(x,t)
\end{array}\right.
\end{equation}
whenever the integrals on the right-hand sides have a sense.

\medskip

Next two assertions follows immediately from \cite[Proposition 4.7]{Schw} and
\cite[Proposition 4.8]{Schw}, respectively.
\begin{proposition}\label{Schw}
Let the assumptions of Theorem \ref{T1} be satisfied and let $F$ be given by \eqref{F}.
Then there are a nondecreasing function $h{:}\,\OT{\to}\,\R$ left-continuous on
$(0,T]$ and a continuous, increasing function $\omega:[0,\infty)\,{\to}\,\R$ with
$\omega(0)=0$ and such that
$F(\lambda,\cdot,\cdot)\in\mathcal{F}(\Omega\times\OT,h,\omega)$ for all
$\lambda\in\Lambda.$
\end{proposition}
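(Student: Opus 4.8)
The plan is to show that the function $F$ defined by \eqref{F}, which splits as $F=F_1+F_2$, belongs to the class $\mathcal{F}(\Omega\times\OT,h,\omega)$ by establishing the two defining inequalities \eqref{Fa} and \eqref{Fa1} separately for $F_1$ and $F_2$, and then combining them. The natural strategy is to build the single nondecreasing majorant $h$ out of two pieces: one controlling the Lebesgue-integral part $F_1$ through the bound $m$ from \eqref{A2i}, and one controlling the Kurzweil--Stieltjes part $F_2$ through the variation of $h$ together with the bound $m_u$ from \eqref{A2ii}. First I would set, for the Lipschitz-in-$t$ estimate \eqref{Fa}, the function $h_1(t)=\int_0^t m(s)\,ds$, which is nondecreasing (indeed absolutely continuous) on $\OT$, and observe that for $t_1<t_2$,
\[
   \|F_1(\lambda,x,t_2)-F_1(\lambda,x,t_1)\|_n
   =\Big\|\int_{t_1}^{t_2} f(\lambda,x,s)\,ds\Big\|_n
   \le\int_{t_1}^{t_2} m(s)\,ds=h_1(t_2)-h_1(t_1),
\]
uniformly in $\lambda$ and $x$, which is exactly the $F_1$-contribution to \eqref{Fa}. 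For $F_2$ I would use $h_2(t)=\int_0^t m_u(s)\,d[\var_0^s h]$, which is well defined and finite by \eqref{A2ii}, nondecreasing, and left-continuous on $(0,T]$ by the left-continuity of $h$; the Kurzweil--Stieltjes estimate from \cite[Theorem 6.7.4]{MST} (the same tool already used in the proof of Proposition \ref{P1}) gives
\[
   \|F_2(x,t_2)-F_2(x,t_1)\|_n
   \le\int_{t_1}^{t_2}\|g(x,s)\|_n\,d[\var_0^s h]
   \le h_2(t_2)-h_2(t_1).
\]
Taking $h=h_1+h_2$ then yields \eqref{Fa} for $F$ with a single nondecreasing, left-continuous majorant, since the two bounds add.

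Next I would address the continuity-type condition \eqref{Fa1}, which is where the modulus $\omega$ enters. For $F_1$ the increment
$F_1(\lambda,x,t_2)-F_1(\lambda,x,t_1)-F_1(\lambda,y,t_2)+F_1(\lambda,y,t_1)
=\int_{t_1}^{t_2}[f(\lambda,x,s)-f(\lambda,y,s)]\,ds$
must be bounded by $\omega(\|x-y\|_n)\,|h(t_2)-h(t_1)|$, and for $F_2$ the analogous increment is $\int_{t_1}^{t_2}[g(x,s)-g(y,s)]\,d[\var_0^s h]$. The difficulty here is that \eqref{A2i} and \eqref{A2ii} only assume $f(\lambda,\cdot,t)$ and $g(\cdot,t)$ are continuous in $x$ (pointwise in $t$), not uniformly, so one cannot immediately read off a single modulus $\omega$. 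Since this Proposition is asserted to follow immediately from \cite[Proposition 4.7]{Schw}, I would invoke that reference for the construction of $\omega$: typically one exploits that $\Omega$ can be covered by relatively compact sets on which the dominated families $\{f(\lambda,\cdot,s)\}$ and $\{g(\cdot,s)\}$ admit, via the integrable dominations $m$ and $m_u$ and a Lebesgue-type argument, a common continuity modulus, and then one defines $\omega$ as an increasing, continuous majorant of the resulting oscillation function with $\omega(0)=0$.

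The main obstacle is precisely this passage from pointwise continuity of $f$ and $g$ in the spatial variable to a uniform modulus $\omega$ valid simultaneously across all $\lambda\in\Lambda$, all $t\in\OT$, and integrated against both $ds$ and $d[\var_0^s h]$; the integrable dominations $m,m_u$ are what make a dominated-convergence / uniform-integrability argument go through, so any blow-up of the increments in $t$ is absorbed into the common nondecreasing function $h=h_1+h_2$ while the $x$-dependence is captured by $\omega$. Once both \eqref{Fa} and \eqref{Fa1} are verified for $F=F_1+F_2$ with this $h$ and this $\omega$, the conclusion $F(\lambda,\cdot,\cdot)\in\mathcal{F}(\Omega\times\OT,h,\omega)$ for every $\lambda\in\Lambda$ follows directly from Definition \ref{ClassF}, and the left-continuity of $h$ on $(0,T]$ is inherited from the left-continuity of the original $h$ (assumed in \eqref{A1}) through $h_2$ and from the absolute continuity of $h_1$. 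Since the statement is explicitly attributed to \cite[Proposition 4.7]{Schw}, I would keep the argument short, citing that result for the technical construction of $\omega$ and presenting only the explicit majorant $h=h_1+h_2$ together with the two increment estimates above.
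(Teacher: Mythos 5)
Your treatment of \eqref{Fa} is correct and explicit: taking $h_1(t)=\int_0^t m(s)\,ds$, $h_2(t)=\int_0^t m_u(s)\,d[\var_0^s h]$ (with $h$ inside the integral being the integrator from \eqref{A1}) and using $h_1+h_2$ as the majorant is exactly the natural construction, and your left-continuity claim is right. Note, however, that the paper does not actually prove Proposition \ref{Schw}: it is asserted to follow ``immediately'' from \cite[Proposition 4.7]{Schw}, so for the hard half there is nothing to compare against except that citation --- and that hard half is where your proposal has a genuine gap. The repair you sketch for \eqref{Fa1}, namely covering $\Omega$ by relatively compact sets and extracting a ``common modulus'' via dominated convergence from the bounds $m,m_u$, cannot be completed. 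Dominated convergence gives, for each fixed $\lambda$ and each compact $K\subset\Omega$, continuity of $(x,y)\mapsto\int_0^T\|f(\lambda,x,s)-f(\lambda,y,s)\|_n\,ds$ on $K\times K$, hence a modulus $\mu_{K,\lambda}$ bounding this quantity by $\mu_{K,\lambda}(\|x-y\|_n)$. But this is (i) only local: nothing produces one modulus on all of the open set $\Omega$, where continuity need not be uniform near $\partial\Omega$; (ii) not uniform in $\lambda$: no continuity or compactness in $\lambda$ is assumed, while the statement requires a single pair $(h,\omega)$ for all $\lambda\in\Lambda$; and (iii) of the wrong form: \eqref{Fa1} demands the product bound $\omega(\|x-y\|_n)\,|h(t_2)-h(t_1)|$, which for short intervals $[t_1,t_2]$ is far stronger than a bound by a modulus alone --- one would need the measures $A\mapsto\int_A\|f(\lambda,x,s)-f(\lambda,y,s)\|_n\,ds$, over all pairs $(x,y)$ and all $\lambda$, to be uniformly absolutely continuous with respect to a single $dh$ with weights $\omega(\|x-y\|_n)$, and nothing in \eqref{A2i}--\eqref{A2ii} provides this.

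In fact no argument can close this gap, because the conclusion is false under the hypotheses of Theorem \ref{T1} alone. Take $n=1$, $\Omega=(0,1)$, $g\equiv 0$ and $f(\lambda,x,t)=\sin(1/x)$: then \eqref{A2i} holds with $m\equiv 1$, \eqref{A2ii} is trivial, and \eqref{F} gives $F(\lambda,x,t)=t\,\sin(1/x)$. If \eqref{Fa1} held for some pair $(h,\omega)$ as in Definition \ref{ClassF}, then with $t_1=0$, $t_2=T$ we would have $T\,|\sin(1/x)-\sin(1/y)|\le\omega(|x-y|)\,[h(T)-h(0)]$ for all $x,y\in(0,1)$; choosing $x_k=(2\pi k+\pi/2)^{-1}$ and $y_k=(2\pi k-\pi/2)^{-1}$ makes the left-hand side equal to $2T$ while $|x_k-y_k|\to 0$ forces the right-hand side to $0$, a contradiction. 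So the cited \cite[Proposition 4.7]{Schw} must be importing an additional hypothesis of Carath\'eodory--Lipschitz type, roughly $\|f(\lambda,x,t)-f(\lambda,y,t)\|_n\le l(t)\,\sigma(\|x-y\|_n)$ with $l$ Lebesgue integrable and $\|g(x,t)-g(y,t)\|_n\le l_u(t)\,\sigma(\|x-y\|_n)$ with $\int_0^T l_u(s)\,d[\var_0^s h]<\infty$; indeed, without such a hypothesis the statement itself is false, so this is as much a defect of the paper's formulation as of your proof. With that hypothesis added, your scheme works verbatim: take $\omega=\sigma$ (up to normalization) and augment your majorant by $\int_0^t l(s)\,ds+\int_0^t l_u(s)\,d[\var_0^s h]$. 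Your instinct that the passage from pointwise continuity to a uniform $\omega$ is ``the main obstacle'' is exactly right; the point is that it is not a technicality the citation can absorb, but a missing assumption.
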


\begin{proposition}\label{Schw2}
Let the assumptions of Theorem \ref{T1} be satisfied and let $F$ be given by \eqref{F}.
Then the integrals
\[
  \int_0^r D F(\lambda,x(\tau),t), \,\,
  \int_{0}^{r}f(\lambda,x(s),s)\,d s
  \mbox{\  and\ } \int_{0}^{r}g(x(s),s)\,d h(s)
\]
exist and the equality
\[
   \int_0^r D F(\lambda,x(\tau),t)
   =\int_{0}^{r}f(\lambda,x(s),s)\,d s+\int_{0}^{r}g(x(s),s)\,d h(s)
\]
holds for all $r\,{\in}\,\OT,$ $\lambda\,{\in}\,\Lambda$ and $x\,{\in}\,G\OT$
such that $x(s)\,{\in}\,\Omega$ for all $s\,{\in}\,\OT.$
\end{proposition}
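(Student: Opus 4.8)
The plan is to establish existence of the three integrals first, and then, using linearity of the Kurzweil integral, to reduce the claimed identity to two separate ``correspondence'' identities, the substantial one concerning the Stieltjes part. For existence, Proposition \ref{Schw} gives $F(\lambda,\cdot,\cdot)\in\mathcal{F}(\Omega\times\OT,h,\omega)$, so Lemma \ref{rrr} guarantees that $\int_0^r DF(\lambda,x(\tau),t)$ exists for every $r\in\OT$ and every $x\in G\OT$ with values in $\Omega$. The Lebesgue integral $\int_0^r f(\lambda,x(s),s)\,ds$ exists because, by \eqref{A2i}, its integrand is measurable (a Carath\'eodory function composed with the Borel-measurable regulated function $x$) and dominated by $m\in L^1\OT$; the Kurzweil--Stieltjes integral $\int_0^r g(x(s),s)\,dh(s)$ exists by \eqref{A2ii}, exactly as already used in the proof of Proposition \ref{P1}. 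Since $F=F_1+F_2$, it then suffices to prove
\[
\int_0^r DF_1(\lambda,x(\tau),t)=\int_0^r f(\lambda,x(s),s)\,ds,\qquad
\int_0^r DF_2(x(\tau),t)=\int_0^r g(x(s),s)\,dh(s),
\]
after which linearity of the Kurzweil integral delivers the full statement. The first identity is just the special case $h(s)=s$ of the second, and it is strictly easier because the Lebesgue integrator $ds$ has no atoms; so I concentrate on $F_2$.

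Fix $\lambda,x,r$ and set $w(r)=\int_0^r g(x(s),s)\,dh(s)$. For a tagged partition $P=(\tau_j,[t_{j-1},t_j])_{j=1}^{\nu(P)}$ of $[0,r]$, additivity of the Kurzweil--Stieltjes integral lets me rewrite the corresponding integral sum for $DF_2$ as
\[
\sum_{j=1}^{\nu(P)}\big[F_2(x(\tau_j),t_j)-F_2(x(\tau_j),t_{j-1})\big]-w(r)
=\sum_{j=1}^{\nu(P)}\int_{t_{j-1}}^{t_j}\big[g(x(\tau_j),s)-g(x(s),s)\big]\,dh(s),
\]
so the whole problem reduces to making the right-hand side smaller than a prescribed $\eps>0$ by a suitable choice of gauge $\delta$ on $[0,r]$. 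On each subinterval I estimate the term by $\int_{t_{j-1}}^{t_j}\|g(x(\tau_j),s)-g(x(s),s)\|_n\,d[\var_0^s h]$ and then split $[0,r]$ according to the joint behaviour of $x$ and of the measure $dh$.

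The heart of the matter, and the step I expect to be the main obstacle, is precisely this gauge construction. Where $x$ is continuous, the regulatedness of $x$ together with the continuity of $g(\cdot,s)$ in its first variable makes $\|g(x(\tau_j),s)-g(x(s),s)\|_n$ uniformly small once the subinterval is fine enough; where $\var_0^\cdot h$ is continuous, a short subinterval contributes little on its own, since $\int_0^T m_u\,d[\var_0^s h]<\infty$ forces the variation of $h$ over shrinking intervals to $0$. The genuinely delicate points are those that are simultaneously discontinuity points of $x$ and atoms of $h$: there one cannot expect $x(\tau_j)\approx x(s)$, and the only way to kill the contribution is to force such a point to be a tag, so that $g(x(\tau_j),s)=g(x(s),s)$ holds exactly at $s=\tau_j$. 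As $x$ is regulated and $h$ is of bounded variation, there are only countably many such points, of which only finitely many carry more than a prescribed fraction of $\eps$ of the total variation; I would therefore shrink $\delta$ near each of those finitely many points so that any $\delta$-fine partition is compelled to take them as tags (the usual Saks--Henstock device), while the dominated tail controlled by $\int_0^T m_u\,d[\var_0^s h]$ absorbs the remainder. Combining the three regimes yields the bound below $\eps$, hence the correspondence for $F_2$; adding the (easier) $F_1$ identity and invoking linearity completes the argument. Alternatively, the entire statement is the content of \cite[Proposition 4.8]{Schw}.
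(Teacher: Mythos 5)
Your closing sentence is, in fact, the paper's entire proof: the authors give no argument for Proposition~\ref{Schw2} beyond the citation of \cite[Proposition 4.8]{Schw}. So your fallback coincides with the paper, and what needs scrutiny is your direct gauge argument; as written it has two genuine gaps. The first concerns existence of $\int_0^r g(x(s),s)\,d h(s)$ for an \emph{arbitrary} $x\in G\OT$ with values in $\Omega$, which is part of the conclusion, not a given. Your appeal to Proposition~\ref{P1} does not deliver it: there $x$ is assumed to be a solution of \eqref{SIE}, so both integrals exist by the very definition of a solution, and nothing in that proof establishes existence for general regulated $x$. This is not cosmetic, because your Riemann-sum identity already invokes the existence and additivity of $w(r)=\int_0^r g(x(s),s)\,d h(s)$ over the partition; the equality argument is therefore conditioned on the unproven existence claim (to avoid the circularity one would have to compare $\int_0^r DF_2(x(\tau),t)$ with the Kurzweil--Stieltjes sums $\sum_j g(x(\tau_j),\tau_j)[h(t_j)-h(t_{j-1})]$, proving existence and equality simultaneously).

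The second gap is in your ``continuous regime'': condition \eqref{A2ii} gives continuity of $g(\cdot,s)$ for each \emph{fixed} $s$ only --- no equicontinuity in $s$, and indeed not even measurability of $g$ in $t$ --- so smallness of $\|x(\tau_j)-x(s)\|_n$ on a fine subinterval does not make $\|g(x(\tau_j),s)-g(x(s),s)\|_n$ small uniformly (or even measurably) in $s$; that step fails as stated. Both gaps are repaired by trading pointwise continuity for the integral-form bounds that Proposition~\ref{Schw} already supplies: by \eqref{Fa1}, $\bigl\|\int_{t_1}^{t_2}[g(a,s)-g(b,s)]\,d h(s)\bigr\|_n\le\omega(\|a-b\|_n)\,|h(t_2)-h(t_1)|$ for constant $a,b\in\Omega$. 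One then verifies the identity first for step functions (the Hake-type computation carried out in the proof of Proposition~\ref{semi}), and passes to a uniform limit of step functions with values in $\Omega$: Lemma~\ref{JA} controls the $DF$ side, while the bound above together with dominated convergence (used the same way in the proof of Proposition~\ref{derr}) controls the Stieltjes side and yields, at the same time, the missing existence of $\int_0^r g(x(s),s)\,d h(s)$. This limit argument, rather than a pointwise gauge construction, is essentially how the cited result of Schwabik is proved; his Carath\'eodory-type hypotheses are formulated in integral form from the outset.
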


\medskip

The correspondence between solutions of distributional differential equations and
generalized ordinary differential equations is clarified by the following theorem.
The proof follows easily from Proposition \ref{P1} and \cite[Theorem 4B.1]{Schw}
(cf. also \cite[Theorem 5.17]{Schwabik}).

\begin{theorem}\label{ponte}
Let the assumptions of Proposition \ref{Schw2} be satisfied. Then the couple
$(x,\lambda)\in G\OT\times\Lambda$ is a~solution of measure differential equation
\eqref{MDE} if and only if it is a~solution of the generalized ordinary
differential equation \thetag{1.2}.
\end{theorem}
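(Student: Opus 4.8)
The plan is to establish the equivalence of the two notions of solution by chaining together the results already assembled in the excerpt, treating each direction through the common integral formulation \eqref{SIE}. The key observation is that Theorem~\ref{T1} already provides the bridge between \eqref{MDE} and \eqref{SIE}: under the standing assumptions \eqref{A1}, \eqref{A2i} and \eqref{A2ii} (which are exactly the hypotheses of Proposition~\ref{Schw2}), a function $x\in G\OT$ solves the measure differential equation \eqref{MDE} if and only if it solves the Stieltjes integral equation \eqref{SIE}. Hence it suffices to show that $(x,\lambda)$ solves \eqref{SIE} if and only if it solves the generalized ODE \thetag{1.2}, and the theorem follows by transitivity.

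First I would record the definition of a solution to \thetag{1.2} in integral form. By Definition~\ref{solucao} (applied with the parameter $\lambda$ fixed and $a=0,\ b=T$), $(x,\lambda)$ is a solution of \thetag{1.2} precisely when $x(s)\in\Omega$ for all $s\in\OT$ and
\[
   x(s)=x(0)+\int_0^s DF(\lambda,x(\tau),t)\quad\mbox{for all\ } s\in\OT,
\]
where $F$ is the function built in \eqref{F}. I would then invoke Proposition~\ref{Schw2}, which guarantees both that the Kurzweil integral $\int_0^s DF(\lambda,x(\tau),t)$ exists for every $s\in\OT$ and that it coincides with the sum of the two Kurzweil--Stieltjes integrals appearing on the right-hand side of \eqref{SIE}. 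Substituting this identity into the displayed equation shows immediately that the generalized-ODE integral relation is the same as \eqref{SIE}; thus the two conditions are literally equivalent once the integrals are known to exist.

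To close the argument I would combine the two equivalences. In the forward direction, if $(x,\lambda)$ solves \eqref{MDE}, then by Theorem~\ref{T1} it solves \eqref{SIE}; in particular $x(t)\in\Omega$ for all $t$ and $x$ has bounded variation by Proposition~\ref{P1}, so the hypotheses needed to apply Proposition~\ref{Schw2} are met and the Kurzweil integral formulation holds, i.e. $x$ solves \thetag{1.2}. Conversely, a solution of \thetag{1.2} satisfies the integral identity, which via Proposition~\ref{Schw2} is \eqref{SIE}, and then Theorem~\ref{T1} returns a solution of \eqref{MDE}. The one point that requires care rather than obstacle is verifying that the regularity demanded by each cited result is in fact available: Proposition~\ref{Schw} supplies the nondecreasing $h$ and modulus $\omega$ so that $F(\lambda,\cdot,\cdot)\in\mathcal F(\Omega\times\OT,h,\omega)$, placing $F$ in the proper class, while Proposition~\ref{P1} delivers the left-continuity and bounded variation of $x$ needed to identify the Kurzweil--Stieltjes and Lebesgue--Stieltjes integrals. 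I expect no genuine difficulty here—the substance of the theorem is entirely contained in Proposition~\ref{Schw2} together with Theorem~\ref{T1}, and the proof is essentially a bookkeeping of which proposition licenses each step.
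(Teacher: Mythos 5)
Your proposal is correct and takes essentially the same route as the paper: the paper's own proof is a one-line appeal to Proposition \ref{P1} together with \cite[Theorem 4B.1]{Schw} (cf. \cite[Theorem 5.17]{Schwabik}), which is precisely the identification of the integral equation \eqref{SIE} with the generalized ODE \thetag{1.2} that you obtain from Proposition \ref{Schw2}. Your chaining MDE $\Leftrightarrow$ \eqref{SIE} (via Theorem \ref{T1} and Proposition \ref{P1}) and \eqref{SIE} $\Leftrightarrow$ \thetag{1.2} (via Proposition \ref{Schw2}) just makes explicit the bookkeeping that the paper delegates to those citations.
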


\section{Bifurcation theory for Measure Differential Equations}

Let us turn our attention back to the periodic problem for the measure differential
equation
\begin{equation}\label{MDE-per}
   D x=f(\lambda,x,t)+g(x,t)\,D h,\quad x(0)=x(T).
\end{equation}
As in section 3, we will assume that conditions \eqref{A1}, \eqref{A2i} and
\eqref{A2ii} hold and $F:\Lambda\times\Omega\times\OT$ be given by \eqref{F}.
Then, by Proposition \ref{Schw}, there are a nondecreasing function
$h{:}\,\OT{\to}\,\R$ left-continuous on $(0,T]$ and a continuous, increasing
function $\omega:[0,\infty)\to\R$ with $\omega(0)=0$ and such that
$F(\lambda,\cdot,\cdot)\,{\in}\,\mathcal{F}(\Omega\times\OT,h,\omega)$ for
all $\lambda\in\Lambda.$ As a result, $F$ satisfies condition \eqref{B1}
from the previous section and, according to Theorem \ref{ponte}, the problems
\eqref{MDE-per} and
\begin{equation}\label{GDE-per}
   \dfrac{d x}{d \tau}=DF(\lambda,x,t),\quad x(0)=x(T),
\end{equation}
are equivalent.

Furthermore, we will assume also
\begin{equation}\label{A3}
\left\{\begin{array}{l}
  \mbox{$(x_0,\lambda)\in G\OT\times\Lambda$ is a~solution of \eqref{MDE-per}
  for any $\lambda\in\Lambda$ and there is a $\rho>0$}
\\[1mm]
  \mbox{such that $x(t)\in\Omega$ for all $t\in\OT$ and $x\in B(x_0,\rho).$}
\end{array}\right.
\end{equation}
Of course, then \eqref{B2} is true, as well.

Analogously to $\Phi,$ we define
\begin{equation}\label{operador1}\hskip-22mm
\left\{\begin{array}{l}\displaystyle
   \wt{\Phi}(\lambda,x)(t)
   =x(T)+\int_{0}^{t} f(\lambda,x(s),s)\,d s+\int_{0}^{t} g(x(s),s)\,d h(s)
  \\[3mm]
   \hskip20mm\mbox{for\ } \lambda\in\lambda,\, x\in B(x_0,\rho),\, t\in\OT.
\end{array}\right.
\end{equation}
By Proposition \ref{Schw}, we have
\begin{align*}
   &\wt{\Phi}(\lambda,x)(s)=x(T)+\int_0^s D F(\lambda,x(\tau),t)=\Phi(\lambda,x)(s)
 \\
   &\hskip20mm\mbox{for\ } s\in\OT, \lambda\in\Lambda \mbox{\ and\ } x\in B(x_0,\rho)
\end{align*}
and the following statement obviously holds.

\begin{proposition}\label{equiv1}
Let the assumptions of Theorem \ref{T1} be satisfied and let $F$ be given by \eqref{F}.
In addition, assume \eqref{A3} and let the operator $\wt{\Phi}$ be defined
by \eqref{operador1}. Then $\wt{\Phi}(\lambda,\cdot)$ maps $B(x_0,\rho)$ into
$G\OT$ for any $\lambda\in\Lambda.$ Moreover, problem \eqref{MDE-per} is
equivalent to finding couples $(x,\lambda)$ such that $x=\wt{\Phi}(\lambda,x),$
as well as to finding solutions $(x,\lambda)$ of \eqref{op-eq}.
\end{proposition}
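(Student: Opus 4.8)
The plan is to reduce everything to results already proved in Section~\ref{necessary} and earlier in the present section, since under the stated hypotheses the operator $\wt{\Phi}$ of \eqref{operador1} coincides with the operator $\Phi$ of \eqref{operador}. First I would record that the assumptions of Theorem~\ref{T1} together with \eqref{A3} place us exactly in the situation of Section~\ref{necessary}: by Proposition~\ref{Schw} the function $F$ of \eqref{F} satisfies $F(\lambda,\cdot,\cdot)\in\mathcal{F}(\Omega\times\OT,h,\omega)$ for every $\lambda\in\Lambda$, which is condition \eqref{B1}, while \eqref{A3} is nothing but condition \eqref{B2}. Hence both \eqref{B1} and \eqref{B2} hold, and the operator $\Phi$ of \eqref{operador} is well defined on $\Lambda\times B(x_0,\rho)$.

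The key identity is $\wt{\Phi}(\lambda,x)=\Phi(\lambda,x)$ on $B(x_0,\rho)$ for each $\lambda\in\Lambda$, which is precisely the computation displayed just before the statement and which rests on Proposition~\ref{Schw2}, asserting that the Kurzweil integral $\int_0^s DF(\lambda,x(\tau),t)$ equals the sum of the two Stieltjes integrals appearing in \eqref{operador1}. Granting this identity, the first assertion is immediate: Proposition~\ref{equiv} guarantees that $\Phi(\lambda,\cdot)$ maps $B(x_0,\rho)$ into $G\OT$, so the same holds for $\wt{\Phi}(\lambda,\cdot)$.

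For the equivalences I would chain three facts. Since $\wt{\Phi}=\Phi$, the fixed-point problem $x=\wt{\Phi}(\lambda,x)$ is literally the operator equation \eqref{op-eq}, which settles one of the two claimed equivalences. Next, Proposition~\ref{equiv} tells us that \eqref{op-eq} is equivalent to the periodic generalized ODE \eqref{GDE-per}, that is, \eqref{GDEper} instantiated with the data of this section. Finally, Theorem~\ref{ponte} identifies the solutions of \eqref{GDE-per} with the solutions of the measure differential problem \eqref{MDE-per}. Composing these three equivalences yields the remaining claim.

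I do not expect a genuine obstacle here, as the proposition is essentially a corollary of the preceding machinery. The only point deserving care is the bookkeeping of hypotheses, namely checking that the assumptions of Theorem~\ref{T1} together with \eqref{A3} really do deliver both \eqref{B1} and \eqref{B2}, so that Proposition~\ref{equiv} and Theorem~\ref{ponte} are applicable; once that is verified, the argument is a short concatenation of already-established statements.
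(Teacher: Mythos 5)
Your proof is correct and follows essentially the same route as the paper, which treats this proposition as an immediate consequence of the identity $\wt{\Phi}(\lambda,x)=\Phi(\lambda,x)$ (via Proposition~\ref{Schw2}) combined with Proposition~\ref{equiv} and Theorem~\ref{ponte}, exactly the chain you assemble. Your only improvement on the paper is a small one: the text cites Proposition~\ref{Schw} for the identity $\wt{\Phi}=\Phi$, whereas, as you correctly note, the integral equality actually comes from Proposition~\ref{Schw2}.
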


Thus, it is natural to consider the bifurcation points of the periodic
problem \eqref{MDE-per} in the sense of Definition \ref{bif}.

\begin{definition}\label{biff}
Solution $(x_0,\lambda_0)\in G\OT\times\Lambda$ of \eqref{MDE-per} is said
to be a~{\em bifurcation point} of \eqref{MDE-per} if every neighborhood of
$(x_0,\lambda_0)$ in $B(x_0,\rho)\times\Lambda$ contains a~solution $(x,\lambda)$
of \eqref{MDE-per} such that $x\ne x_0.$
\end{definition}

Next statement follows from Theorem \ref{TheEx}.

\begin{corollary}\label{MeasEx}
Let the assumptions of Theorem \ref{T1} be satisfied. In addition, assume
\eqref{A3} and
\begin{equation}\label{A4}
\left\{\begin{array}{l}
\mbox{there is a $\gamma:\OT\to\R$ nondecreasing and such that for any $\eps>0$}
\\
\mbox{there is $\delta>0$ such that}
\\[2mm]\displaystyle
  \hskip10mm
  \Big\|\int_s^t [f(\lambda_2,x,r)-f(\lambda_1,x,r)]\,d r\Big\|_n
  <\eps\,|\gamma(t)-\gamma(s)|
 \\[3mm]
  \hskip10mm\mbox{for \ } x\in\Omega,\,t,s\in\OT
  \mbox{\ and\ } \lambda_1,\lambda_2\in\Lambda
  \mbox{\ such that\ }|\lambda_1-\lambda_2|<\delta.
\end{array}\right.
\end{equation}
Moreover, let the operator $\wt{\Phi}$ be defined by \eqref{operador1} and let
$[\lambda^*_1,\lambda^*_2]\subset\Lambda$ \ be such that
\begin{align}\label{ind2}
  &x_0 \mbox{\ is an isolated fixed point of the operators \ }
  \wt{\Phi}(\lambda^*_1,\cdot) \mbox{ \ and \ } \wt{\Phi}(\lambda^*_2,\cdot)
\\\noalign{\noindent\mbox{and}}\label{5.7}
  &\deg_{LS}(Id-\wt{\Phi}(\lambda^*_1,\cdot),B(x_0,\rho),0)
    \ne\deg_{LS}(Id-\wt{\Phi}(\lambda^*_2,\cdot),B(x_0,\rho),0).
\end{align}
Then there is $\lambda_0\in[\lambda^*_1,\lambda^*_2]$ such that $(x_0,\lambda_0)$
is a bifurcation point of \eqref{MDE-per}.
\end{corollary}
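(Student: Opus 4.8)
The plan is to deduce the statement from Theorem~\ref{TheEx} by rewriting the periodic measure problem through the function $F$ of \eqref{F} and checking that all the structural hypotheses of the generalized-ODE bifurcation theory hold. First I would invoke Proposition~\ref{Schw}: the function $F$ given by \eqref{F} satisfies $F(\lambda,\cdot,\cdot)\in\mathcal{F}(\Omega\times\OT,h,\omega)$ for all $\lambda\in\Lambda$ with a suitable nondecreasing $h$ and modulus $\omega$, which is precisely condition \eqref{B1}. Next, \eqref{A3} gives \eqref{B2} directly (as already observed in the text preceding \eqref{operador1}), and Proposition~\ref{Schw2} together with the computation before Proposition~\ref{equiv1} shows that the operators coincide, $\wt{\Phi}(\lambda,x)=\Phi(\lambda,x)$ on $\Lambda\times B(x_0,\rho)$, where $\Phi$ is defined by \eqref{operador}. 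Hence the fixed-point problem for $\wt{\Phi}$ is literally the operator equation \eqref{op-eq}.

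The key verification is that \eqref{A4} yields \eqref{B3}. Since in \eqref{F} the Stieltjes part $F_2(x,t)=\int_0^t g(x,s)\,d h(s)$ does not depend on $\lambda$, it cancels in the second difference of $F$ with respect to $t,s$ and $\lambda_1,\lambda_2$, leaving
\[
  F(\lambda_1,x,t)-F(\lambda_2,x,t)-F(\lambda_1,x,s)+F(\lambda_2,x,s)
  =\int_s^t\big[f(\lambda_1,x,r)-f(\lambda_2,x,r)\big]\,d r.
\]
Taking norms (the sign being immaterial) and invoking \eqref{A4} with the same $\gamma$ gives exactly the estimate required in \eqref{B3}, with the same $\eps$ and $\delta$. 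Thus \eqref{B1}, \eqref{B2} and \eqref{B3} all hold for $F$.

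It then remains to transport the degree hypotheses and return to the measure equation. Because $\wt{\Phi}(\lambda,\cdot)=\Phi(\lambda,\cdot)$, condition \eqref{ind2} says exactly that $x_0$ is an isolated fixed point of $\Phi(\lambda_1^*,\cdot)$ and of $\Phi(\lambda_2^*,\cdot)$, and \eqref{5.7} states that the Leray--Schauder degrees of $Id-\Phi(\lambda_i^*,\cdot)$ differ, which are the hypotheses \eqref{ind} of Theorem~\ref{TheEx}. Applying Theorem~\ref{TheEx} produces $\lambda_0\in[\lambda_1^*,\lambda_2^*]$ such that $(x_0,\lambda_0)$ is a bifurcation point of the periodic generalized ODE \eqref{GDE-per}. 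Finally, by Theorem~\ref{ponte} and Proposition~\ref{equiv1} the solution sets of \eqref{MDE-per}, of \eqref{GDE-per} and of \eqref{op-eq} coincide, so that, comparing Definition~\ref{bif} with Definition~\ref{biff}, $(x_0,\lambda_0)$ is a bifurcation point of \eqref{MDE-per} as well.

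The step I expect to require the most care is the matching of the degree conditions: Theorem~\ref{TheEx} is phrased via the Leray--Schauder index at the isolated fixed point $x_0$, whereas \eqref{5.7} is stated via the degree of $Id-\wt{\Phi}(\lambda_i^*,\cdot)$ on the fixed ball $B(x_0,\rho)$. One must check that, under the isolation assumed in \eqref{ind2}, the reformulated argument of \cite[Theorem 5.6]{JC} underlying Theorem~\ref{TheEx} detects bifurcation precisely at $x_0$ from the change of this degree over $B(x_0,\rho)$ (equivalently, that excision reduces the ball-degree to the index at $x_0$). The remaining verifications, namely \eqref{B1}, \eqref{B2} and the cancellation giving \eqref{B3}, are routine consequences of the results of Section~4.
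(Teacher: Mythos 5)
Your proposal is correct and takes essentially the same route as the paper: the published proof likewise reduces everything to Theorem~\ref{TheEx}, using the identity $\wt{\Phi}(\lambda,x)=\Phi(\lambda,x)$ and Proposition~\ref{equiv1} for the equivalence of \eqref{MDE-per} with \eqref{GDEper} and \eqref{op-eq}, the previously established validity of \eqref{B1} and \eqref{B2}, and the assertion that \eqref{A4}, \eqref{ind2} and \eqref{5.7} supply the remaining hypotheses of Theorem~\ref{TheEx}. Your explicit cancellation of the $\lambda$-independent part $F_2$ in the second difference, showing that \eqref{A4} yields \eqref{B3} with the same $\gamma$, $\eps$ and $\delta$, is precisely the unstated computation behind that one-line assertion.

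One correction concerning the step you yourself flag as delicate. You suggest that, under the isolation assumed in \eqref{ind2}, excision identifies $\deg_{LS}(Id-\wt{\Phi}(\lambda_i^*,\cdot),B(x_0,\rho),0)$ with $\mbox{ind}_{LS}(Id-\wt{\Phi}(\lambda_i^*,\cdot),x_0)$. Isolation alone does not license this: it guarantees only that some small ball $B(x_0,r)$ contains no fixed point other than $x_0$, whereas excision down to $B(x_0,r)$ requires that the entire set $B(x_0,\rho)\setminus B(x_0,r)$ be free of fixed points of $\wt{\Phi}(\lambda_i^*,\cdot)$, which is not part of \eqref{ind2}; a degree change over the large ball could in principle be produced by fixed points away from $x_0$, in which case no bifurcation from the branch $(x_0,\lambda)$ need occur. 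This mismatch between the index hypothesis \eqref{ind} of Theorem~\ref{TheEx} and the ball-degree hypothesis \eqref{5.7} is present in the paper itself, whose proof simply declares that \eqref{ind2} and \eqref{5.7} imply the hypotheses of Theorem~\ref{TheEx}, so your treatment is no less rigorous than the published one. A complete argument needs either the additional assumption that $x_0$ is the only fixed point of $\wt{\Phi}(\lambda_i^*,\cdot)$ in $B(x_0,\rho)$ (then ball degree and index coincide and your excision remark becomes valid), or to read \eqref{5.7} directly as the index condition of \eqref{ind}, which is evidently how the authors use it in Example~\ref{e1}.
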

\begin{proof}
Recall that $F$ is given by \eqref{F} and hence, by Proposition \ref{equiv1},
the problems \eqref{GDEper} and \eqref{MDE-per} are then equivalent. Furthermore,
we already know that the assumptions \eqref{B1} and \eqref{B2} are satisfied.
Finally, our assumptions \eqref{A4}, \eqref{ind2} and \eqref{5.7} imply that
also all the remaining assumptions of Theorem \ref{TheEx} hold. This completes
the proof.
\end{proof}

\medskip

Our next wish is to find an explicit formula for the derivative of the
function $F$ given by \eqref{F}. This will be given by Proposition \ref{derr}.
In its proof we will need to interchange order of some iterated integrals.
This will be enabled by the following lemma inspired by Lemma 17.3.1 from
\cite{Hild} and valid for the Riemann-Stieltjes integrals, cf. also
\cite[Exercise II.19.3]{Hild}.

\begin{lemma}\label{iterated}
Let $-\infty<a<b<\infty,$ $-\infty<c<d<\infty,$  $g\in BV\ab,$
$h\in BV[c,d],$ and let $f:\ab\times [c,d]\to\mathcal{L}(\R^n)$ be bounded.
Moreover, let the integrals
\[
   G(s):=\int_a^b d g(\tau)\,f(\tau,s) \mbox{\ and\ }
   H(t):=\int_c^d f(t,\sigma)\,d h(\sigma)
\]
exist for all $s\in [c,d]$ and $t\in\ab.$

Then both the iterated integrals
\[
   \int_a^b d g(t)\Big(\int_c^d f(t,s)\,d h(s)\Big)
   \mbox{ \ and \ }
   \int_c^d \Big(\int_a^b [d g(t)]\,f(t,s)\Big) d h(s)
\]
exist and the equality
\[
   \int_a^b d g(t)\Big(\int_c^d f(t,s)\,d h(s)\Big)
   =\int_c^d \Big(\int_a^b [d g(t)]\,f(t,s)\Big) d h(s)
\]
holds.
\end{lemma}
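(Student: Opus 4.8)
The goal is a Fubini-type theorem for Kurzweil--Stieltjes integrals: we must show that the two iterated integrals exist and coincide. The plan is to reduce everything to the definition of the Kurzweil--Stieltjes integral through Riemann--Stieltjes sums, and to exploit the already-known one-variable results. First I would fix an arbitrary $\eps>0$ and, using the existence of the inner integrals $H(t)=\int_c^d f(t,\sigma)\,dh(\sigma)$ and $G(s)=\int_a^b dg(\tau)\,f(\tau,s)$, produce gauges on $[c,d]$ and on $\ab$ that control the corresponding Riemann--Stieltjes sums uniformly. The boundedness of $f$ together with $g\in BV\ab$ and $h\in BV[c,d]$ is what keeps all the error terms finite.

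The core of the argument is the classical interchange at the level of finite sums. For a tagged partition $(\sigma_k,[s_{k-1},s_k])$ of $[c,d]$ and a tagged partition $(\tau_j,[t_{j-1},t_j])$ of $\ab$, consider the double sum
\[
   \sum_{j}\sum_{k}[g(t_j)-g(t_{j-1})]\,f(\tau_j,\sigma_k)\,[h(s_k)-h(s_{k-1})].
\]
This expression is manifestly symmetric in the roles of the two partitions, so it serves as a common quantity that both iterated integrals can be compared against. The plan is to show that, for suitably fine partitions, the double sum is $\eps$-close to $\int_a^b dg(t)\,H(t)$ by first summing in $k$ (invoking the inner integral $H$ together with its defining gauge, and estimating the resulting error by a factor $\var_a^b g$), and symmetrically $\eps$-close to $\int_c^d G(s)\,dh(s)$ by first summing in $j$ (using the inner integral $G$ and a factor $\var_c^d h$). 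Since both outer integrands $H$ and $G$ are themselves Riemann--Stieltjes integrable (this is exactly the existence claim, which follows because $H,G$ are bounded and the integrators have bounded variation, so Kurzweil--Stieltjes integrability holds), the two iterated integrals each equal the common limit of the double sums, hence are equal.

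The main obstacle I anticipate is the bookkeeping needed to make the two reductions of the double sum compatible, since the inner integrals $H(t)$ and $G(s)$ come with their own gauges that depend on the point $t$ or $s$. I would handle this by choosing, for each tag $\tau_j$, the gauge on $[c,d]$ associated with the inner integral $H(\tau_j)$, and intersecting finitely many such gauges is not available directly because the tags are not known in advance; instead I would use a gauge $\delta$ on $[c,d]$ built so that the Riemann--Stieltjes sum for $H(t)$ is within $\eps$ of $H(t)$ \emph{uniformly} in $t\in\ab$, which is where uniform control from the boundedness of $f$ and $\var_c^d h<\infty$ enters. A cleaner route, given the structure of the statement, is to cite the Riemann--Stieltjes version (Lemma~17.3.1 of \cite{Hild}) after reducing to finite-dimensional Riemann--Stieltjes integrals via the equivalence of the Kurzweil--Stieltjes and Riemann--Stieltjes integrals for bounded integrands against $BV$ integrators; the remaining work is then verifying that the hypotheses of that classical lemma are met under \eqref{A1}-type boundedness, which is routine.
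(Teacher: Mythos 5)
Your double-sum identity is the right classical starting point (it is essentially how Hildebrandt proves the Riemann--Stieltjes version), but three load-bearing claims in your plan are false or unjustified, and each is a genuine gap. First, the existence of the outer integrals is \emph{not} automatic: boundedness of $G$ (or $H$) together with a $BV$ integrator does not imply Kurzweil--Stieltjes integrability --- for instance, the characteristic function of a non-measurable set is bounded but not integrable against $d h(s)=d s$. The existence of $\int_c^d G(s)\,d h(s)$ and $\int_a^b d g(t)\,H(t)$ is precisely part of what must be proved and cannot be waved through. Second, your fallback device --- a single gauge on $[c,d]$ making the sums for $H(t)$ $\eps$-close to $H(t)$ \emph{uniformly} in $t\in\ab$ --- does not exist in general; it would amount to equi-integrability of the family $\{f(t,\cdot):t\in\ab\}$, which does not follow from boundedness of $f$ and $\var_c^d h<\infty$. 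This non-uniformity of gauges is exactly the obstruction that makes the gauge-integral version of the lemma nontrivial. Third, the proposed ``cleaner route'' rests on an equivalence of the Kurzweil--Stieltjes and Riemann--Stieltjes integrals for bounded integrands against $BV$ integrators; this equivalence is false (take $f=g=\chi_{[1/2,1]}$ on $[0,1]$: the Kurzweil--Stieltjes integral $\int_0^1 f\,d g$ exists, while the Riemann--Stieltjes integral does not, because of the common one-sided discontinuity). That is why the paper calls \cite[Lemma 17.3.1]{Hild} only an inspiration, ``valid for the Riemann-Stieltjes integrals,'' and supplies an independent proof.

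The paper's proof avoids all three problems by trading gauge bookkeeping for a convergence theorem. It forms Riemann--Stieltjes sums $F_n(s)=\sum_{j}[g(t_j)-g(t_{j-1})]\,f(\tau_j,s)$, which converge pointwise on $[c,d]$ to $G(s)$ and are uniformly bounded by $\sup_{(t,s)}\|f(t,s)\|_{n\times n}\,(\var_a^b g)$. Each $F_n$ is integrable against $d h$ \emph{not} because it is bounded, but because it is a finite linear combination of the functions $f(\tau_j,\cdot)$, whose integrals $H(\tau_j)$ exist by hypothesis. The Bounded Convergence Theorem for the Kurzweil--Stieltjes integral, \cite[Theorem 6.8.13]{MST}, then delivers simultaneously the existence of $\int_c^d G(s)\,d h(s)$ and the convergence $\int_c^d F_n(s)\,d h(s)\to\int_c^d G(s)\,d h(s)$; and since, by linearity, $\int_c^d F_n(s)\,d h(s)=\sum_j[g(t_j)-g(t_{j-1})]\,H(\tau_j)$ exactly (no second partition or approximate interchange is ever needed), letting $n\to\infty$ gives the equality of the two iterated integrals. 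If you wish to keep your double-sum architecture, you would still need such a convergence theorem to obtain the existence statements; the uniform-gauge shortcut cannot be repaired.
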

\proof
Let $s\in [c,d]$ be given. Then to any $n\in\N$ we can choose a tagged
division $P_n=(\tau_j,[t_{j-1},t_j])_{j=1}^{\nu(P_n)}$ of $\ab$ such that
\[
 \left\|\sum_{j=1}^{\nu(P_n)}[g(t_j)-g(t_{j-1})]\,f(\tau_j,s)
 -G(s)\right\|_{n\times n}<\frac 1n.
\]
Hence, if we put
\[
   F_n(s)=\sum_{j=1}^{\nu(P_n)}[g(t_j)-g(t_{j-1})]\,f(\tau_j,s)
   \quad\mbox{for\ } n\in\N,
\]
then \ $\displaystyle\lim_{n\to\infty} F_n(s)=G(s).$ As $s$ was chosen
arbitrarily in $[c,d],$ it follows that
\begin{equation*}\label{limFn}
   \lim_{n\to\infty} F_n(s)=G(s) \quad\mbox{ \ for all \ } s\in[c,d].
\end{equation*}
Obviously, $|F_n(s)|\le K<\infty$ for all $n\in\N$ and $s\in [c,d],$
where
\[
  K=\sup\{\|f(t,s)\|_{n\times n}:(t,s)\in\ab\times[c,d]\}\,(\var_a^b\,g)
\]
Consequently, Bounded Convergence Theorem \cite[Theorem 6.8.13]{MST} yields
the existence of the integral $\displaystyle\int_c^d G(s)\,d h(s),$ while
\[
   \lim_{n\to\infty}\int_c^d F_n(s)\,d h(s)=\int_c^d G(s)\,d h(s).
\]
Analogously, we can show that also the integral
$\displaystyle\int_c^d d g(t)\,H(t)$ exists.

It remains to prove the equality
\[
   \int_c^d G(s)\,d h(s)=\int_a^b d g(t)\,H(t).
\]
To this aim, notice that for any $n\in\N$ a tagged division
$P_n=(\tau_j,[t_{j-1},t_j])_{j=1}^{\nu(P_n)}$ of $\ab$ from above we have
\begin{align*}
 &\sum_{j=1}^{\nu(P_n)}[g(t_j)-g(t_{j-1})]\,H(\tau_j)
 =\int_c^d\sum_{j=1}^{\nu(P_n)}[g(t_j)-g(t_{j-1})]\,f(\tau_j,s)\,d h(s)
\\
 &\quad=\int_c^d F_n(s)\,d h(s),
\end{align*}
while
\[
  \lim_{n\to\infty}\sum_{j=1}^{\nu(P_n)}[g(t_j)-g(t_{j-1})]\,H(\tau_j)
  =\int_a^b d g(t)\,H(t)
\]
and
\[
 \lim_{n\to\infty}\int_c^d F_n(s)\,d h(s)=\int_c^d G(s)\,d h(s).
\]
This completes the proof.
\endproof

In what follows the symbols $f\sp{\,\prime}_x(\lambda,x,t)$ and
$g\sp{\,\prime}_x(x,t)$ stand for real $n\times n$-matrices representing
respectively the total differentials of the functions $f$ and $g$ with respect
to $x$ at the points $(\lambda,x,t)$ or $(x,t),$ respectively, whenever
they have a sense.

\begin{proposition}\label{derr}
Let the assumptions of Theorem \ref{T1} be satisfied and let $F$ be given
by \eqref{F}. Moreover, let
\begin{equation}\label{D1i}\hskip-5mm
\left\{\begin{array}{l}
  \mbox{for every $(\lambda,x,t)\in\Lambda\times\Omega\times\OT$ the function $f$
  has a total differential $f\sp{\,\prime}_x$}
\\[1mm]
  \mbox{continuous with respect to $x\in\Omega$ for
  each $\lambda\in\Lambda$ and $t\in\OT$ and there is}
\\[1mm]
  \mbox{a Lebesgue integrable function $\Theta$ such that}
\\
  \hskip20mm
  \|f\sp{\,\prime}_x(\lambda,x,t)\|\le\Theta(t)
  \quad\mbox{for \ } (\lambda,x,t)\in\Lambda\times\Omega\times\OT
\end{array}\right.
\end{equation}
and
\begin{equation}\label{D1ii}\hskip-2mm
\left\{\begin{array}{l}
  \mbox{for every $(x,t)\in\Omega\times\OT$ the function $g$ has a total
  differential \ $g\sp{\,\prime}_x$ bounded}
\\[1mm]
  \mbox{on $\Omega\times\OT$ and continuous with respect to $x\in\Omega$
  for each $t\in\OT$ and}
\\[1mm]
  \mbox{there is $\Theta_u:\OT\to\R$ such that}
\\[2mm]
  \displaystyle\hskip20mm
  \int_0^T \Theta_u(s)\,d\,[\var_0^s\,h]<\infty \quad\mbox{and \ }
  \|g\sp{\,\prime}_x(x,t)\|\le\Theta_u(t)
\\
  \hskip25mm\mbox{for \ } (x,t)\in\Omega\times\OT.
\end{array}\right.
\end{equation}
Then, for every $(\lambda,x,t)\in\Lambda\times\Omega\times\OT$ the function
$F$ has a total differential $F\sp{\,\prime}_x(\lambda,x,t)$ and it is given by
\begin{equation}\label{deri}
  F\sp{\,\prime}_x(\lambda,x,t)=\int_0^t f\sp{\,\prime}_x(\lambda,x,s)\,d s
                           +\int_0^t g\sp{\,\prime}_x(x,s)\,d h(s)
   \quad\mbox{for all \ } (\lambda,x,t)\in\Lambda\times\Omega\times\OT.
\end{equation}
Moreover, $F\sp{\,\prime}_x(\lambda,\cdot,t)$  continuous with respect to
$x\in\Omega$ for any $(\lambda,t)\in\Lambda\times\OT.$
\end{proposition}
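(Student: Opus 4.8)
The plan is to exploit the additive splitting $F=F_1+F_2$ from \eqref{F} and to prove separately that $F_1(\lambda,\cdot,t)$ and $F_2(\cdot,t)$ admit the two total differentials appearing in \eqref{deri}; since the Fr\'echet derivative is additive, summing the two partial results yields the claim. First I would record that the candidate derivatives
\[
  \int_0^t f'_x(\lambda,x,s)\,d s \quad\mbox{and}\quad \int_0^t g'_x(x,s)\,d h(s)
\]
are well defined: the former because $\|f'_x(\lambda,x,s)\|\le\Theta(s)\in L^1\OT$ by \eqref{D1i}, the latter because $g'_x$ is bounded with $\int_0^T\Theta_u\,d[\var_0^s h]<\infty$ by \eqref{D1ii}, so that existence follows exactly as in the proof of Proposition \ref{P1} (via \cite[Theorem 6.7.4]{MST}).

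For both parts I would start from the Mean Value Theorem for vector-valued functions (\cite[Lemma 8.11]{Peter}, already used in Proposition \ref{derivative}), writing along the segment $x+\alpha v$, $\alpha\in[0,1]$,
\[
  f(\lambda,x+v,s)-f(\lambda,x,s)=\Big[\int_0^1 f'_x(\lambda,x+\alpha v,s)\,d\alpha\Big]v,
\]
and similarly for $g$. Integrating over $s$ and subtracting the candidate derivative applied to $v$ produces matrix remainders
\[
  \int_0^t\Big[\int_0^1\big(f'_x(\lambda,x+\alpha v,s)-f'_x(\lambda,x,s)\big)\,d\alpha\Big]\,d s
  \quad\mbox{and}\quad
  \int_0^t\Big[\int_0^1\big(g'_x(x+\alpha v,s)-g'_x(x,s)\big)\,d\alpha\Big]\,d h(s),
\]
each multiplied by $v$. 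For $F_1$ this is classical differentiation under the Lebesgue integral sign: the triangle inequality bounds the remainder by $\|v\|\int_0^t\int_0^1\|f'_x(\lambda,x+\alpha v,s)-f'_x(\lambda,x,s)\|\,d\alpha\,d s$, which is $o(\|v\|)$ because the integral factor tends to $0$ by the Lebesgue Dominated Convergence Theorem (pointwise convergence to $0$ from continuity of $f'_x$ in $x$, domination by $2\Theta$).

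The genuinely delicate part is $F_2$, since there the inner $\alpha$-average must be integrated against the Stieltjes measure $d h$. Here I would invoke Lemma \ref{iterated} (with $d g=d h$ on $[0,t]$ and $d h=d\alpha$ on $[0,1]$, applied to the bounded integrand $g'_x(x+\alpha v,\cdot)$ whose partial integrals exist by the previous paragraph) to interchange the order of integration and rewrite the matrix remainder as $\int_0^1\big[\int_0^t\big(g'_x(x+\alpha v,s)-g'_x(x,s)\big)\,d h(s)\big]\,d\alpha$. Each inner Kurzweil--Stieltjes integral is then controlled by Lemma \ref{Ant}, giving the bound $\int_0^t\|g'_x(x+\alpha v,s)-g'_x(x,s)\|\,d[\var_0^s h]$; integrating over $\alpha$ and letting $\|v\|\to0$ via the Bounded Convergence Theorem for the Kurzweil--Stieltjes integral (\cite[Theorem 6.8.13]{MST}), using continuity of $g'_x$ in $x$ and domination by $2\Theta_u$, shows that this remainder is again $o(\|v\|)$. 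I expect this to be the main obstacle, as it is where one must simultaneously justify the Fubini-type interchange and a dominated passage to the limit inside a Stieltjes integral. Finally, the asserted continuity of $F'_x(\lambda,\cdot,t)$ in $x$ follows from the same two convergence theorems: for $x_k\to x$ the Lebesgue part $\int_0^t\big(f'_x(\lambda,x_k,s)-f'_x(\lambda,x,s)\big)\,d s\to0$ by Dominated Convergence, and the Stieltjes part $\int_0^t\big(g'_x(x_k,s)-g'_x(x,s)\big)\,d h(s)\to0$ by Bounded Convergence, in both cases using continuity of the integrands in $x$ and the integrable dominations from \eqref{D1i} and \eqref{D1ii}.
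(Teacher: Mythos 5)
Your proposal is correct and follows essentially the same route as the paper's own proof: split $F=F_1+F_2$, handle $F_1$ by the classical Leibniz rule, and for $F_2$ use the Mean Value Theorem to produce the $\alpha$-averaged integrand, interchange the $\alpha$- and $dh$-integrations via Lemma \ref{iterated}, and pass to the limit by dominated/bounded convergence, with continuity of $F'_x(\lambda,\cdot,t)$ following from the same convergence arguments. The only cosmetic deviations are that you work with a general increment $v$ and an $o(\|v\|)$ remainder where the paper takes the directional limit $\theta\to0+$ of Definition \ref{frechet}, and that you apply bounded convergence against $d[\var_0^s h]$ where the paper splits $h=h_1-h_2$ into nondecreasing parts and applies the Dominated Convergence Theorem to each.
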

\begin{proof}
By the classical Leibniz Integral Rule, cf. e.g. \cite[V.39.1]{McSh}, we have
\[
  F\sp{\,\prime}_{1,x}(\lambda,x,t)
  =\int_0^t f\sp{\,\prime}_x(\lambda,x,s)\,d s
  \mbox{ \ for \ } (\lambda,x,t)\times\Lambda\times\Omega\times\OT.
\]
Analogously, the equality
\begin{equation}\label{deri2}
  F\sp{\,\prime}_{2,x}(x,t)=\int_0^t g\sp{\,\prime}_x(x,s)\,d h(s)
  \quad\mbox{for \ } (x,t)\in\Omega\times\OT
\end{equation}
could be essentially justified by the measure theory version of the Leibniz
Integral Rule, cf. e.g. \cite[Proposition 23.37]{yeh}. However, our setting
is little bit different. Hence, we feel that it would be honest to  give here
an independent proof. Let $(z,x,t)\in\R^n\times\Omega\times\OT$ be given, while
$x+z\in\Omega.$ Using the Mean Value Theorem (cf. \cite[Lemma 8.11]{Peter}),
we get
\begin{align*}
   \frac{F_2(x+\theta\,z,t)-F_2(x,t)}{\theta}
 &=\int_0^t
    \left[\frac{g\left(x+\theta z,s\right)-g(x,s)}{\theta}\right]\,d h(s)
\\
 &=\left(\int_0^t\left(
   \int_0^1\left[g\sp{\,\prime}_x(\alpha(x+\theta z)+(1-\alpha)\,x,s)\right]\,d\alpha
                 \right)d h(s)\right) z
\end{align*}
for any $\theta>0$ sufficiently small. By Lemma \ref{iterated} we have
\begin{multline*}
 \left(\int_0^t\left(
   \int_0^1\left[g\sp{\,\prime}_x(\alpha(x+\theta z)+(1-\alpha)\,x,s)\right]\,d\alpha
                 \right)d h(s)\right)
 \\
 =\left(\int_0^1\left(
   \int_0^t\left[g\sp{\,\prime}_x(\alpha(x+\theta z)+(1-\alpha)\,x,s)\right]\,d h(s)
                 \right)d\alpha\right).
\end{multline*}
Moreover, in view of \eqref{D1ii}, we get
\begin{align*}
   &\lim_{\theta\to 0+}g\sp{\,\prime}_x(\alpha(x+\theta\,z)+(1-\alpha)\,x,s)
                       =g\sp{\,\prime}_x(x,s)
  \\\noalign{\noindent\mbox{and}}
   &\left\|g\sp{\,\prime}_x(\alpha(x+\theta\,z)+(1-\alpha)\,x,s))\right\|\le\Theta_u(s)
   \mbox{ \ for all\ } (\alpha,s)\in[0,1]\times\OT.
\end{align*}
Let $h_1, h_2$ be functions nondecreasing on $\OT$ and such that $h=h_1-h_2$ on $\OT.$
Then, by Dominated Convergence Theorem (cf. \cite[Theorem 6.8.11]{MST}) we get
\[
  \lim_{\theta\to 0+}
                \int_0^t g\sp{\,\prime}_x(\alpha(x+\theta\,z)+(1-\alpha)\,x,s)\,d h_i(s)
               =\int_0^t g\sp{\,\prime}_x(x,s)\,d h_i(s)\in\mathcal{L}(\R^n)
\]
for $i=1,2.$ Consequently,
\begin{align*}
  \lim_{\theta\to 0+}\int_0^t g\sp{\,\prime}_x(\alpha(x+\theta\,z)+(1-\alpha)\,x,s)\,d h(s)
  &=\int_0^t g\sp{\,\prime}_x(x,s)\,d h(s)
 \\&\hskip-25mm
   =\int_0^t g\sp{\,\prime}_x(x,s)\,d h_1(s)-\int_0^t g\sp{\,\prime}_x(x,s)\,d h_2(s)
    \in\mathcal{L}(\R^n).
\end{align*}
Finally, as
\[
  \left\|\int_0^t g\sp{\,\prime}_x(x,s)\,d h(s)\right\|_{n\times n}
  \le\int_0^T\Theta_u(s)\,d\var_0^s\,h<\infty,
\]
using Dominated Convergence Theorem for Lebesgue integrals we complete the proof
of \eqref{deri}. The continuity of $F\sp{\,\prime}_x(\lambda,\cdot,t)$ then follows
readily thanks to the continuity assumptions contained in \eqref{D1i} and
\eqref{D1ii}.
\end{proof}

Next example is taken from \cite[Example 6.12]{JC}

\begin{example}\label{e1}
Consider the impulsive problem
\begin{equation}\label{ex}
   x'=\lambda\,b(t)\,x+c(t)\,x^2,\quad \Delta^+x(\tfrac 12)=x^2(\tfrac 12),
   \quad x(0)=x(1)
\end{equation}
with \ $b,c\in L^1[0,1]$ \ and \ $\displaystyle\int_0^1 b(s)\,d s\ne 0,$
\ i.e.,
\[
  x(t)=x(1)+\int_0^t f(\lambda,x(s),s)\,d s+\int_0^t g(x(s),s)\,d h(s)
\]
where  $f(\lambda,x,s)=\lambda\,b(s)\,x+c(s)\,x^2,$ $g(x,s)=x^2,$ \
$h(s)=\chi_{(\tfrac 12,1]}(s).$

Obviously, $x_0(t)\equiv 0$ is a solution of \eqref{ex} for all $\lambda.$
Linearization at $x_0$ yields
\[
     z'=\lambda\,b(t)\,z,\quad z(0)=z(1)
     \Leftrightarrow
     \left\{\begin{array}{l}
        \lambda=0 \,\land\,  z\equiv const,
       \\
        \lambda\ne 0 \,\land\, z\equiv 0.
     \end{array}\right.
\]
One can verify, cf. \cite[Example 6.12]{JC} that the assumptions of Corollary \ref{MeasEx}
are satisfied. In particular, there are \ $\lambda^*_1,\lambda^*_2$ such that
$-1<\lambda^*_1<0<\lambda^*_2<1$ and
\[
     \mbox{ind}_{LS}(Id-\wt{\Phi}(-\delta,0))
     =-\,\mbox{deg}_{LS}(Id-\wt{\Phi}(\delta,0))
\]
for any \ $\delta>0$ sufficiently small. Thus, by Corollary \ref{MeasEx}, there exist
a $\delta^*>0$ such that for any $\delta\in(0,\delta^*)$ there is $\lambda_0\in(-\delta,\delta)$
such that $(\lambda_0,0)$ is a bifurcation point of \eqref{ex}.
\end{example}

\medskip

Proposition \ref{Schw2} can be obviously modified to matrix valued
function. Therefore, we can state the following assertion.

\begin{proposition}\label{3.9der}
Let the assumptions of Proposition \ref{derr} be satisfied.
Then all the integrals
\[
  \int_0^r D F\sp{\,\prime}_x(\lambda,x(\tau),t),\,\,
  \int_{0}^{r}f\sp{\,\prime}_x(\lambda,x(s),s)\,d s,\,\,
  \int_{0}^{r}g\sp{\,\prime}_x(x(s),s)\,d h(s)
\]
exist and the equality
\begin{equation}\label{DF}
   \int_0^r D[F\sp{\,\prime}_x(\lambda,x(\tau),t)]
   =\int_{0}^{r}f\sp{\,\prime}_x(\lambda,x(s),s)\,d s
     +\int_{0}^{r}g\sp{\,\prime}_x(x(s),s)\,d h(s)
\end{equation}
holds for all $r\in\OT,$ $\lambda\in\Lambda$ and $x\in G\OT$ such that
$x(s)\in\Omega$ for all $s\in\OT.$
\end{proposition}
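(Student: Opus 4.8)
The plan is to apply the $\mathcal{L}(\R^n)$-valued analogue of Proposition~\ref{Schw2} to the triple $(f\sp{\,\prime}_x,g\sp{\,\prime}_x,F\sp{\,\prime}_x)$ in place of $(f,g,F).$ The key observation is that, by formula~\eqref{deri} of Proposition~\ref{derr}, we already have
\[
   F\sp{\,\prime}_x(\lambda,x,t)=\int_0^t f\sp{\,\prime}_x(\lambda,x,s)\,d s
                               +\int_0^t g\sp{\,\prime}_x(x,s)\,d h(s),
\]
which is exactly the structure of the defining relations~\eqref{F} for $F,$ with the data $f,g,F$ replaced by their matrix-valued counterparts $f\sp{\,\prime}_x,g\sp{\,\prime}_x,F\sp{\,\prime}_x.$ Hence, once we check that $f\sp{\,\prime}_x$ and $g\sp{\,\prime}_x$ satisfy the matrix versions of hypotheses~\eqref{A2i} and~\eqref{A2ii}, the whole assertion---existence of all three integrals together with the identity~\eqref{DF}---will follow line by line from the proof of Proposition~\ref{Schw2}, only re-reading every estimate with $\|\cdot\|_{n\times n}$ in place of $\|\cdot\|_n.$

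First I would verify the analogue of~\eqref{A2ii} for $g\sp{\,\prime}_x.$ By~\eqref{D1ii}, the map $g\sp{\,\prime}_x(\cdot,t)$ is continuous on $\Omega$ for each $t\in\OT,$ and there is $\Theta_u$ with $\|g\sp{\,\prime}_x(x,t)\|_{n\times n}\le\Theta_u(t)$ and $\int_0^T\Theta_u(s)\,d[\var_0^s h]<\infty$ for $(x,t)\in\Omega\times\OT;$ these are precisely the requirements of~\eqref{A2ii}, now read for an $\mathcal{L}(\R^n)$-valued integrand. Consequently, the Kurzweil--Stieltjes integral $\int_0^r g\sp{\,\prime}_x(x(s),s)\,d h(s)$ exists by the same existence argument used for $\int_0^r g(x(s),s)\,d h(s).$

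Next I would verify the analogue of~\eqref{A2i} for $f\sp{\,\prime}_x.$ Continuity of $f\sp{\,\prime}_x(\lambda,\cdot,t)$ on $\Omega$ and the domination $\|f\sp{\,\prime}_x(\lambda,x,t)\|_{n\times n}\le\Theta(t)$ by a Lebesgue integrable $\Theta$ are both furnished by~\eqref{D1i}. The only item not stated outright is the Lebesgue measurability of $t\mapsto f\sp{\,\prime}_x(\lambda,x,t);$ this I would obtain by observing that each column of $f\sp{\,\prime}_x(\lambda,x,\cdot)$ is the pointwise limit, as $\theta\to 0+,$ of the difference quotients $\theta^{-1}[f(\lambda,x+\theta\,e_k,\cdot)-f(\lambda,x,\cdot)],$ which are measurable in $t$ by~\eqref{A2i}; hence the limit is measurable as well, and $\int_0^r f\sp{\,\prime}_x(\lambda,x(s),s)\,d s$ exists as a Lebesgue integral.

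Finally, with $f\sp{\,\prime}_x$ and $g\sp{\,\prime}_x$ shown to meet the same structural hypotheses that $f$ and $g$ meet in Proposition~\ref{Schw2}, I would invoke that proposition in its matrix-valued form to obtain simultaneously the existence of $\int_0^r D F\sp{\,\prime}_x(\lambda,x(\tau),t)$ and the identity~\eqref{DF}, for every $r\in\OT,$ $\lambda\in\Lambda$ and $x\in G\OT$ with $x(s)\in\Omega$ on $\OT.$ I expect no serious obstacle: the content is essentially a transcription of the vector-valued argument into the matrix setting, and the single genuinely new point is the measurability of $f\sp{\,\prime}_x$ in $t,$ which is therefore the step deserving the most care.
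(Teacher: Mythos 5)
Your proof is correct and follows essentially the same route as the paper, which disposes of this proposition with the single remark that Proposition \ref{Schw2} ``can be obviously modified to matrix valued function.'' Your verification that $f\sp{\,\prime}_x$ and $g\sp{\,\prime}_x$ inherit the hypotheses \eqref{A2i} and \eqref{A2ii} from \eqref{D1i} and \eqref{D1ii} --- including the measurability of $t\mapsto f\sp{\,\prime}_x(\lambda,x,t)$ obtained as a pointwise limit of difference quotients along a sequence $\theta_n\to 0+$ --- supplies exactly the details the paper leaves implicit.
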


Next result characterizes the derivative $\wt{\Phi}\sp{\,\prime}_x$ of
the operator $\wt{\Phi}$ given by \eqref{operador1}.

\begin{proposition}\label{semi}
Let the assumptions of Proposition \ref{derr} be satisfied. Then, for
given $(\lambda,x)\in\Lambda\times B(x_0,\rho),$ the derivative
$\wt{\Phi}\sp{\,\prime}_x(\lambda,x)$ of $\wt{\Phi}(\lambda,\cdot)$
at $x$ is given by
\begin{equation}\label{fi'}
    \left(\wt\Phi\sp{\,\prime}_x(\lambda,x)z\right)(t)
    =z(T)+\int_0^t f\sp{\,\prime}_x(x(s),s)\,z(s)\,d \tau
         +\int_0^t g\sp{\,\prime}_x(\lambda,x(s),s)\,z(s)\,du(s)
\end{equation}
for all $z\in G\OT$ and $t\in [0,T].$
\end{proposition}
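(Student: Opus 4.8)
The plan is to pull everything back to the generalized-ODE operator $\Phi$ and then unfold the resulting Kurzweil integral into two Kurzweil--Stieltjes integrals. First I would recall the identity displayed just before Proposition~\ref{equiv1}, namely that $\wt{\Phi}(\lambda,x)=\Phi(\lambda,x)$ for every $\lambda\in\Lambda$ and every $x\in B(x_0,\rho),$ where $\Phi$ is given by \eqref{operador} with $F$ from \eqref{F}. Since the two operators coincide on $B(x_0,\rho),$ their Fréchet derivatives in $x$ agree wherever they exist, so it suffices to compute $\Phi'_x(\lambda,x)$ and rewrite its value.

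To invoke Proposition~\ref{derivative} I would verify its hypotheses for $F.$ By Proposition~\ref{derr} the total differential $F'_x(\lambda,x,t)$ exists, equals \eqref{deri}, and is continuous in $x.$ What remains is to produce $\wt{h}$ and $\wt{\omega}$ realizing \eqref{C1}, i.e.\ $F'_x(\lambda,\cdot,\cdot)\in\mathcal{F}(\Omega\times\OT,\wt{h},\wt{\omega};\mathcal{L}(\R^n)).$ This is the exact analogue of Proposition~\ref{Schw}, now applied to $f'_x$ and $g'_x$ in place of $f$ and $g$: the bounds $\|f'_x\|\le\Theta$ and $\|g'_x\|\le\Theta_u$ from \eqref{D1i} and \eqref{D1ii} furnish the nondecreasing majorant, a natural choice being $\wt{h}(t)=\int_0^t\Theta(s)\,d s+\int_0^t\Theta_u(s)\,d[\var_0^s h],$ while the continuity of $f'_x,g'_x$ in $x$ yields the modulus $\wt{\omega}.$ With \eqref{C1} secured, Proposition~\ref{derivative} delivers
\[
   \big(\wt{\Phi}'_x(\lambda,x)\,z\big)(t)
   =z(T)+\int_0^t D[F'_x(\lambda,x(\tau),t)\,z(\tau)]
   \quad\mbox{for\ } z\in G\OT,\ t\in\OT.
\]

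The last task is to recast this Kurzweil integral as the two Stieltjes integrals of \eqref{fi'}. Inserting \eqref{deri} and pulling the tag value $z(\tau)$ inside the inner integrals, the integrand becomes
\[
  F'_x(\lambda,x(\tau),t)\,z(\tau)
  =\int_0^t f'_x(\lambda,x(\tau),s)\,z(\tau)\,d s
   +\int_0^t g'_x(x(\tau),s)\,z(\tau)\,d h(s),
\]
which is precisely the structure \eqref{F}, now for the enlarged state $w=(x,z)$ ranging in the open set $\Omega\times V\subset\R^{2n},$ where $V$ is a bounded open ball containing the (bounded) range of the fixed function $z\in G\OT,$ and for the data $\hat f(\lambda,w,s)=f'_x(\lambda,x,s)\,z$ and $\hat g(w,s)=g'_x(x,s)\,z.$ On $\Omega\times V$ these satisfy \eqref{A2i} and \eqref{A2ii}: they are continuous in $x$ by \eqref{D1i}, \eqref{D1ii}, linear hence continuous in $z,$ and dominated by $\Theta(s)(\|z\|_\infty+1)$ and $\Theta_u(s)(\|z\|_\infty+1).$ Applying Proposition~\ref{Schw2} to $\hat F$ along the curve $\tau\mapsto(x(\tau),z(\tau))$ then gives
\[
   \int_0^t D[F'_x(\lambda,x(\tau),t)\,z(\tau)]
   =\int_0^t f'_x(\lambda,x(s),s)\,z(s)\,d s
    +\int_0^t g'_x(x(s),s)\,z(s)\,d h(s),
\]
which is \eqref{fi'}. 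I expect this last identification to be the main obstacle: multiplying in the regulated factor $z$ forces one to enlarge the state space, and one must check that boundedness of $z$ keeps the dominating functions integrable so that Proposition~\ref{Schw2} (equivalently, the matrix-valued modification noted before Proposition~\ref{3.9der}) still applies. Granting this, the existence of all three integrals and their equality follow exactly as in Propositions~\ref{Schw2} and~\ref{3.9der}.
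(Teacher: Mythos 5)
Your proposal reaches the correct conclusion and shares the paper's skeleton (reduce to $\Phi$ via the identity $\wt{\Phi}=\Phi$ on $B(x_0,\rho)$, then invoke Proposition \ref{derivative}), but it handles the decisive step --- converting the Kurzweil integral $\int_0^t D[F\sp{\,\prime}_x(\lambda,x(\tau),t)\,z(\tau)]$ into the two Kurzweil--Stieltjes integrals of \eqref{fi'} --- by a genuinely different route. The paper proves this identity (its relation \eqref{equal}) in two stages, following \cite[Lemma 5.1]{Ant}: first for piecewise constant $z$, where on each interval of constancy the constant vector factors out so that Proposition \ref{3.9der} applies and the endpoint contributions are controlled by the jump formulas of Lemma \ref{Ant} together with Hake's Theorem; then for general regulated $z$ by uniform approximation by step functions and the Uniform Convergence Theorem. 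Your state-space enlargement replaces this two-stage limiting argument by a single application of Proposition \ref{Schw2}: the pair $w=(x,z)$ ranges in the open set $\Omega\times V\subset\R^{2n}$, the data $\hat f(\lambda,w,s)=f\sp{\,\prime}_x(\lambda,x,s)\,z$ and $\hat g(w,s)=g\sp{\,\prime}_x(x,s)\,z$ inherit the Carath\'eodory-type conditions \eqref{A2i}, \eqref{A2ii} from \eqref{D1i} and \eqref{D1ii} (boundedness of the regulated $z$ keeps the dominating functions $\Theta(\cdot)\,(\|z\|_\infty+1)$ and $\Theta_u(\cdot)\,(\|z\|_\infty+1)$ integrable), and the bridge result gives the identity at once. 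This buys brevity --- no Hake-type endpoint analysis and no passage to the limit along step functions --- at the cosmetic cost that $\hat f,\hat g$ are $\R^n$-valued rather than $\R^{2n}$-valued, which you should note can be fixed by padding with zero components (or by observing that the proof of Proposition \ref{Schw2} is insensitive to the dimension of the value space). Two further remarks: your choice $\wt h(t)=\int_0^t\Theta(s)\,ds+\int_0^t\Theta_u(s)\,d[\var_0^s h]$ for \eqref{C11} is exactly the one the paper itself uses later, in the proof of Theorem \ref{T59}; and your appeal to ``the analogue of Proposition \ref{Schw}'' to produce $\wt\omega$ for \eqref{C12} is a real hand-wave (a uniform modulus of continuity does not follow from pointwise continuity in $x$ alone), but it is precisely the level of rigor the paper adopts when it cites \cite[Proposition 4.7]{Schw}, and the paper's own proof of this proposition leaves the verification of \eqref{C1} entirely implicit, so this is not a gap relative to the source.
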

\proof
Using Proposition \ref{3.9der}, where $u$ need not be monotone, and
analogously to the proof of item 2 of Lemma 5.1 in \cite{Ant}, we
can verify the equality
\begin{equation}\label{equal}
 \int_0^r D[F\sp{\,\prime}_{x}(\lambda,x(\tau),t)\,z(\tau)]
   =\int_0^r f'_x(x(\tau),\tau)\,z(\tau)\,d\tau
   +\int_0^t g'_x(\lambda,x(\tau),\tau)\,z(\tau)\,d h(\tau),
\end{equation}
for every $t\in\OT$, $\lambda\in\Lambda$ and  $x,z\in G\OT$ such that
$x(s)\in\Omega$ for all $s\in\OT.$ Indeed, by Proposition \ref{3.9der},
relation \eqref{DF} is true for every $x\in G\OT.$ Now, let
$[\alpha,\beta]\subset\OT,$ $z\in G\OT$ and $z(t)=\wt{z}\in\R^n$ for
$t\in(\alpha,\beta).$ Then by \eqref{DF}, Lemma \ref{Ant} and Hake Theorem
(cf. e.g. \cite[Theorem 6.5.6]{MST}) we compute
\begin{align*}
   &\int_\alpha^\beta D[F\sp{\,\prime}_x(\lambda,x(\tau),t)\,z(\tau)]
  \\&\,=
    \lim_{\delta\to 0+}\Big(
       \int_{\alpha+\delta}^{\beta-\delta}
               D[F\sp{\,\prime}_x(\lambda,x(\tau),t)]\,\wt{z}
       +\int_\alpha^{\alpha+\delta} D[F\sp{\,\prime}_x(\lambda,x(\tau),t)\,z(\tau)]
  \\&\hskip10mm
       +\int_{\beta-\delta}^\beta D[F\sp{\,\prime}_x(\lambda,x(\tau),t)\,z(\tau)]\Big)
  \\&\,=
    \int_\alpha^\beta f\sp{\,\prime}_x(\lambda,x(\tau),\tau)\,z(\tau)\,d \tau
    +\lim_{\delta\to 0+}\Big(\int_{\alpha+\delta}^{\beta-\delta}
                             g\sp{\,\prime}_x(x(\tau),\tau)\,z(\tau)\,d h(\tau)
  \\&\hskip3mm+
     (F\sp{\,\prime}_x(\lambda,x(\alpha),\alpha+\delta){-}
      F\sp{\,\prime}_x(\lambda,x(\alpha),\alpha))\,z(\alpha)
    +(F\sp{\,\prime}_x(\lambda,x(\beta),\beta){-}
      F\sp{\,\prime}_x(\lambda,x(\beta),\beta-\delta))\,z(\beta)\Big)
  \\&\,=
    \int_\alpha^\beta f\sp{\,\prime}_x(\lambda,x(\tau),\tau)\,z(\tau)\,d \tau
    +\lim_{\delta\to 0+}\Big(\int_{\alpha+\delta}^{\beta-\delta}
                             g\sp{\,\prime}_x(x(\tau),\tau)\,z(\tau)\,d h(\tau)
  \\&\hskip3mm+
     g\sp{\,\prime}_x(x(\alpha),\alpha)\,z(\alpha)\,(u(\alpha+\delta)-u(\alpha))
    +g\sp{\,\prime}_x(x(\beta),\beta)\,z(\beta)\,(u(\beta)-u(\beta-\delta))\Big)
  \\&\,=
      \int_\alpha^\beta f\sp{\,\prime}_x(\lambda,x(\tau),\tau)\,z(\tau)\,d \tau
    +\int_\alpha^\beta g\sp{\,\prime}_x(x(\tau),\tau)\,z(\tau)\,d h(\tau).
\end{align*}
Having in mind that every regulated function is a uniform limit of step
(piece-wise constant) functions, we complete the proof by means of the
Uniform Convergence Theorem (cf. e.g. \cite[Theorem 6.8.2]{MST}).
\endproof
\medskip

Now, we show that $(\lambda_0,x_0)$ is not a~bifurcation point of the
operator equation $\wt{\Phi}(\lambda,x)\,{=}\,x$ whenever
$Id-\wt{\Phi}\sp{\,\prime}_{x}(\lambda_0,x_0)$ is an isomorphism on $G\OT.$

\begin{theorem}\label{T59}
Let the assumptions of Proposition \ref{derr} be satisfied. Moreover, assume
that \eqref{A3} and \eqref{A4} hold and
\begin{equation}\label{D4}\hskip-8mm
\left\{\begin{array}{l}
\mbox{there is a~nondecreasing function $\wt{\gamma}:\OT\to\R$ such that
for any $\eps>0$}
\\
\mbox{there is a~$\delta>0$ such that}
\\[2mm]\displaystyle\quad
 \left\|
 \int_s^t [f\sp{\,\prime}_x(\lambda_1,x,r)-f\sp{\,\prime}_x(\lambda_2,y,r)]\,d r
 +\int_s^t [g\sp{\,\prime}_x(x,r)-g\sp{\,\prime}_x(y,r)]\,d h(r)
 \right\|_{n\times n}
\\[4mm]
 \qquad<\eps\,|\wt{\gamma}(t)-\wt{\gamma}(s)|
\\[2mm]
\mbox{for all $t,s\in\OT$ and all $x,y\in\Omega, \lambda_1,\lambda_2\in\Lambda$
      satisfying}
\\[1mm]\qquad
  |\lambda_1{-}\lambda_2|+\|x\,{-}\,y\|_n<\delta.
\end{array}\right.
\end{equation}

Let the operator $\wt\Phi$ be defined by \eqref{operador1} and let
$\lambda_0\in\Lambda$ be given. Let
$Id-\wt{\Phi}\sp{\,\prime}_{x}(\lambda_0,x_0)$ be an isomorphism of
$G\OT$ onto $G\OT.$ Then there is $\delta>0$ such that $(x,\lambda)$
is not a~bifurcation point of the equation $\wt{\Phi}(\lambda,x)=x$
whenever $\|x-x_0\|_\infty+|\lambda-\lambda_0|<\delta.$
\end{theorem}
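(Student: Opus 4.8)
The plan is to deduce the statement directly from Theorem~\ref{der} by checking that the function $F$ from \eqref{F} fulfils all the hypotheses gathered there and that the two operators, together with their $x$-derivatives, agree. Recall first that, as noted before Proposition~\ref{equiv1}, one has $\wt{\Phi}(\lambda,x)=\Phi(\lambda,x)$ for all $(\lambda,x)\in\Lambda\times B(x_0,\rho)$, where $\Phi$ is given by \eqref{operador} with the present $F$. Moreover, comparing the formula \eqref{de} from Proposition~\ref{derivative} with \eqref{fi'} from Proposition~\ref{semi} and invoking the identity \eqref{equal} (equivalently Proposition~\ref{3.9der}), one gets $\wt{\Phi}\sp{\,\prime}_x(\lambda,x)=\Phi\sp{\,\prime}_x(\lambda,x)$ on $\Lambda\times B(x_0,\rho)$. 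In particular, $Id-\Phi\sp{\,\prime}_x(\lambda_0,x_0)$ is an isomorphism of $G\OT$ onto $G\OT$, and the equations $\wt{\Phi}(\lambda,x)=x$ and $\Phi(\lambda,x)=x$ coincide. Hence it suffices to verify the assumptions of Theorem~\ref{der} for $\Phi$.

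Next I would check these assumptions one by one. Condition \eqref{B1} holds by Proposition~\ref{Schw}, while \eqref{B2} follows from \eqref{A3}. By Proposition~\ref{derr} the derivative $F\sp{\,\prime}_x(\lambda,x,t)$ exists, is continuous in $x$, and is given by \eqref{deri}. Since $g$ does not depend on $\lambda$, its contribution cancels in the relevant differences: using \eqref{deri} one finds $F(\lambda_1,x,t)-F(\lambda_2,x,t)-F(\lambda_1,x,s)+F(\lambda_2,x,s)=\int_s^t[f(\lambda_1,x,r)-f(\lambda_2,x,r)]\,d r$, so \eqref{A4} yields \eqref{B3}. Analogously, setting $x=y$ in \eqref{D4} makes the $g\sp{\,\prime}_x$-term vanish and reduces the left-hand side to $\int_s^t[f\sp{\,\prime}_x(\lambda_1,x,r)-f\sp{\,\prime}_x(\lambda_2,x,r)]\,d r$, which is exactly \eqref{C3} for $F\sp{\,\prime}_x$.

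The only hypothesis demanding genuine work is \eqref{C1}, namely $F\sp{\,\prime}_x(\lambda,\cdot,\cdot)\in\mathcal{F}(\Omega\times\OT,\wt{h},\wt{\omega};\mathcal{L}(\R^n))$. The point is that, by \eqref{deri}, $F\sp{\,\prime}_x$ is built from $f\sp{\,\prime}_x$ and $g\sp{\,\prime}_x$ by exactly the same integral construction \eqref{F} that produces $F$ from $f$ and $g$. Moreover, the bounds and continuity assumed for $f\sp{\,\prime}_x$ in \eqref{D1i} and for $g\sp{\,\prime}_x$ in \eqref{D1ii} are precisely the $\mathcal{L}(\R^n)$-valued analogues of the conditions \eqref{A2i}, \eqref{A2ii} imposed on $f$ and $g$ in Theorem~\ref{T1}. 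Therefore the proof of Proposition~\ref{Schw} applies verbatim with $f,g$ replaced by $f\sp{\,\prime}_x,g\sp{\,\prime}_x$ and delivers a nondecreasing $\wt{h}$ and a continuous increasing $\wt{\omega}$ with $\wt{\omega}(0)=0$ for which \eqref{C11} and \eqref{C12} hold. This is the main obstacle, but it is in essence a routine repetition of the estimates already carried out for $F$.

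Having established \eqref{B1}, \eqref{B2}, the existence and continuity of $F\sp{\,\prime}_x$, and the conditions \eqref{C1}, \eqref{C3}, \eqref{B3}, all hypotheses of Proposition~\ref{cont} and of Theorem~\ref{der} are in force. Applying Theorem~\ref{der} to $\Phi$ then furnishes a $\delta>0$ such that $(x,\lambda)$ is not a bifurcation point of $\Phi(\lambda,x)=x$ whenever $\|x-x_0\|_\infty+|\lambda-\lambda_0|<\delta$. Since $\wt{\Phi}=\Phi$, this is precisely the assertion for $\wt{\Phi}(\lambda,x)=x$, which completes the argument.
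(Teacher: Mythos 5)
Your proof is correct and follows the same overall strategy as the paper's: identify $\wt\Phi$ with $\Phi$ and $\wt\Phi\sp{\,\prime}_x$ with $\Phi\sp{\,\prime}_x$ via \eqref{equal} (Proposition \ref{3.9der}), then reduce the whole statement to Theorem \ref{der} by checking \eqref{B1}, \eqref{B2}, \eqref{B3}, \eqref{C1} and \eqref{C3}, and finally transfer the conclusion back through $\wt\Phi=\Phi$. The one place where you genuinely diverge is the verification of \eqref{C1}. The paper splits it: it gets \eqref{C11} directly from \eqref{D1i}, \eqref{D1ii} and \eqref{deri} with an explicit majorant $\wt h$ built from $\Theta$ and $\Theta_u$, and then extracts \eqref{C12} from the hypothesis \eqref{D4} by taking $\lambda_1=\lambda_2$. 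You instead re-run the $\mathcal{L}(\R^n)$-valued analogue of Proposition \ref{Schw} with $f\sp{\,\prime}_x,g\sp{\,\prime}_x$ in place of $f,g$. That route is legitimate within the paper's framework --- the paper itself sanctions exactly this kind of matrix-valued extension when it modifies Proposition \ref{Schw2} before Proposition \ref{3.9der} --- and it even has a mild advantage: you spend \eqref{D4} only on \eqref{C3}, whereas the paper's $\eps$--$\delta$ deduction of \eqref{C12} from \eqref{D4} is itself somewhat informal (it yields the majorant $\wt\gamma$ rather than $\wt h$ and no explicit modulus $\wt\omega$). Two small points you should patch: first, \eqref{D1i} does not explicitly assume that $f\sp{\,\prime}_x(\lambda,x,\cdot)$ is Lebesgue measurable, while \eqref{A2i} does assume this for $f$, so your ``verbatim'' transfer of Proposition \ref{Schw} needs the remark that measurability in $t$ follows since $f\sp{\,\prime}_x$ is a pointwise limit of difference quotients of $f$, each measurable in $t$; second, the cancellation identity you use to deduce \eqref{B3} from \eqref{A4} follows from \eqref{F}, not from \eqref{deri} --- a citation slip only, since the $g$-terms cancel in either display because $g$ is independent of $\lambda$.
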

\begin{proof}
Recall that, in addition to \eqref{A3}, \eqref{A4} and \eqref{D4}, we assume
that, as in Proposition \ref{derr}, the conditions \eqref{A1}, \eqref{A2i},
\eqref{A2ii}, \eqref{D1i} and \eqref{D1ii} hold, as well.
Let $F$ be given by \eqref{F}. Then, by Proposition \ref{derr}, its derivative
with respect to $x$ is given by \eqref{deri}, i.e.
\[
F\sp{\,\prime}_x(\lambda,x,t)=\int_0^t f\sp{\,\prime}_x(\lambda,x,s)\,d s
                           +\int_0^t g\sp{\,\prime}_x(x,s)\,d h(s)
   \quad\mbox{for all \ } (\lambda,x,t)\in\Lambda\times\Omega\times\OT
\]
for $z\in G\OT$ and $t\in\OT.$

Furthermore, by Proposition \ref{semi}, the derivative with respect to $x$
of $\wt{\Phi}(\lambda,\cdot)$ is given by \eqref{fi'}, i.e
\[
   \wt\Phi\sp{\,\prime}_x(\lambda,x)z(t)
   =z(T)+\int_0^t f\sp{\,\prime}_x(x(s),s)\,z(s)\,d \tau
        +\int_0^t g\sp{\,\prime}_x(\lambda,x(s),s)\,z(s)\,du(s)
\]
for all $(\lambda,x)\in\Lambda\times B(x_0,\rho),$ $z\in G\OT$ and
$t\in [0,T].$ Moreover, by relation \eqref{equal} from the proof of
the same proposition we have
\begin{equation}\label{P*P}\hskip-20mm
\left\{\begin{array}{l}
   \left(\wt{\Phi}\sp{\,\prime}_x(\lambda,x)\,z\right)(t)=
   \left(\Phi\sp{\,\prime}_x(\lambda,x)\,z\right)(t)
  \\[4mm]
   \qquad\mbox{for\ } (\lambda,x)\in\Lambda\times B(x_0,\rho), z\in G\OT
   \mbox{\ and\ } t\in [0,T],
\end{array}\right.
\end{equation}
where $\Phi$ and $\Phi\sp{\,\prime}_{x}$ are respectively given by
\eqref{operador} and \eqref{de}.

\medskip

Now, suppose that $Id-\wt\Phi\sp{\,\prime}_{x}(\lambda_0,x_0):G\OT\to G\OT$
is an isomorphism.  Then, due to \eqref{P*P}, the mapping
$Id-\Phi\sp{\,\prime}_x(\lambda_0,x_0):G\OT\to G\OT$ is an isomorphism,
as well.

\medskip

We want to apply Theorem \ref{der}. To this aim we need to verify that all
its assumptions, i.e. \eqref{B1}, \eqref{B2}, \eqref{B3}, \eqref{C1} and
\eqref{C3} are satisfied.

First, notice that the periodic problem for the equation \eqref{MDE}
is by Theorem \ref{T1} equivalent to the periodic problem for
the integral equation \eqref{SIE}. Moreover, by Proposition
\ref{Schw} there are a nondecreasing function $h:\OT\to\R$
left-continuous on $(0,T]$ and a continuous, increasing function
$\omega:[0,\infty)\to\R$ with $\omega(0)=0$ and such that
$F(\lambda,\cdot,\cdot)\in\mathcal{F}(\Omega\times\OT,h,\omega)$
for all $\lambda\in\Lambda.$ In particular, \eqref{B1} is satisfied.
Moreover, Theorem \ref{ponte} implies that the periodic problem
\eqref{GDEper} is equivalent with the periodic problem for the
equation \eqref{SIE} and, hence, $F$ satisfies also \eqref{B2}.

Second, from \eqref{D1i}, \eqref{D1ii} and \eqref{deri} it follows
immediately that \eqref{C11} is also true if we put
\[
  \wt{h}(t)=\int_0^t\Theta(r)\,d r+\int_0^t\Theta_u(r)\,d[\var_0^r u].
\]

Finally, it remains to show that \eqref{C12} is satisfied, as well.
By \eqref{deri} and \eqref{D4}, there is a~nondecreasing function
$\wt{\gamma}:\OT\to\R$ such that for any $\eps>0$ there is a~$\delta>0$
such that
\begin{align*}
&\|F\sp{\,\prime}_{x}(\lambda_1,x,t)-F\sp{\,\prime}_{x}(\lambda_2,y,t)
-F\sp{\,\prime}_{x}(\lambda_1,x,s)+F\sp{\,\prime}_{x}(\lambda_2,y,s)\|_{n\times n}
\\&\quad
   =\left\|
      \int_s^t[f\sp{\,\prime}_x(\lambda_1,x,r)
                -f\sp{\,\prime}_x(\lambda_2,y,r)]\,d r
     +\int_s^t[g\sp{\,\prime}_x(x,r)
                 -g\sp{\,\prime}_x(y,r)]\,du(r)
   \right\|_{n\times n}\hskip-7mm
\\&\quad
   <\eps\,|\wt{\gamma}(t)-\wt{\gamma}(s)|
\\[4mm]
 &\mbox{for all\ }t,s\in\OT \mbox{\ and all\ }
  x,y\in\Omega,\lambda_1,\lambda_2\in\Lambda \mbox{\ such that\ }
  |\lambda_1\,{-}\,\lambda_2|+\|x\,{-}\,y\|_n<\delta.
\end{align*}
This means that \eqref{C12} and \eqref{C3} are true when we take
$\lambda_1=\lambda_2$ and $x=y$ in the last inequality. Moreover, by
\eqref{A4}, we obtain that also \eqref{B3} is satisfied. Thus, all
the hypotheses of Theorem \ref{der} are satisfied. Therefore, $(\lambda_0,x_0)$
is not a~bifurcation point of the equation $\wt{\Phi}(\lambda,x)\,{=}\,x$
and there is $\delta>0$ such that $(x,\lambda)$ is not a~bifurcation point
of this equation whenever $\|x-x_0\|_\infty+|\lambda-\lambda_0|<\delta.$
This completes the proof.
\end{proof}

Finally, analogously to Theorem \ref{NC} we can state a~necessary condition
for the existence of the bifurcation point to the problem \eqref{MDE-per} in
the form related to the Fredholm type alternative.

\begin{theorem}\label{last}
Let the assumptions of Theorem \ref{T59} be satisfied and let the couple
$(\lambda_0,x_0)\,{\in}\,\Lambda\times\Omega$ be a~bifurcation point
of problem \eqref{MDE-per}. Then then there exists \ $q\in G\OT$ such that
the equation
\[
 z(r)-z(T)\,{-}\int_0^t f\sp{\,\prime}_x(\lambda_0,x_0,\tau)\,z(\tau)\,d\tau
          \,{-}\int_0^t g\sp{\,\prime}_x(x_0,\tau)\,z(\tau)\,du(\tau)=q(t),
 \quad\mbox{for \ } r\in\OT
\end{equation*}
has no solution in $G\OT$ and the corresponding homogeneous equation
\begin{align*}
  &z(r)-z(T)-\int_0^t f\sp{\,\prime}_x(\lambda_0,x_0,\tau)\,z(\tau)\,d\tau
           -\int_0^t g\sp{\,\prime}_x(x_0,\tau)\,z(\tau)\,du(\tau)=0
  \quad\mbox{for \ } r\in\OT
\end{align*}
possesses at least one nontrivial solution in $G\OT.$
\end{theorem}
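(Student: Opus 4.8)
The plan is to derive Theorem~\ref{last} as the measure-differential counterpart of the abstract necessary condition in Theorem~\ref{NC}, reusing the equivalences and identities already established for the operator $\wt{\Phi}$. First I would record that, under the standing hypotheses, the function $F$ defined by \eqref{F} satisfies every assumption required by Theorem~\ref{der}; in fact these verifications — that \eqref{B1}, \eqref{B2}, \eqref{B3}, \eqref{C1} and \eqref{C3} hold for $F$ — are carried out line by line inside the proof of Theorem~\ref{T59}. Hence the conclusion of Theorem~\ref{NC} is at our disposal for the operator equation $\Phi(\lambda,x)=x$. By Proposition~\ref{equiv1} the periodic problem \eqref{MDE-per} is equivalent to $x=\wt{\Phi}(\lambda,x)$, which coincides with $x=\Phi(\lambda,x)$; thus the assumption that $(\lambda_0,x_0)$ is a bifurcation point of \eqref{MDE-per} is precisely that it is a bifurcation point of $\Phi(\lambda,x)=x$.

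Next I would apply Theorem~\ref{der}: were $Id-\wt{\Phi}\sp{\,\prime}_x(\lambda_0,x_0)$ an isomorphism of $G\OT$ onto itself, the point $(\lambda_0,x_0)$ could not be a bifurcation point, so this operator must fail to be an isomorphism. Relation \eqref{P*P} from the proof of Theorem~\ref{T59} gives $\wt{\Phi}\sp{\,\prime}_x(\lambda_0,x_0)=\Phi\sp{\,\prime}_x(\lambda_0,x_0)$, whence $Id-\Phi\sp{\,\prime}_x(\lambda_0,x_0)$ likewise fails to be an isomorphism. The Fredholm-type alternative of Theorem~\ref{Alt} then forces $\mathcal{R}(Id-\Phi\sp{\,\prime}_x(\lambda_0,x_0))\neq G\OT$ and $\mathcal{N}(Id-\Phi\sp{\,\prime}_x(\lambda_0,x_0))\neq\{0\}$, so by Theorem~\ref{NC} there is a $q\in G\OT$ for which
\[
   z(s)-z(T)-\int_0^s D[F\sp{\,\prime}_{x}(\lambda_0,x_0,t)\,z(\tau)]=q(s),\quad s\in\OT,
\]
has no solution in $G\OT$, while the corresponding homogeneous equation admits a nontrivial one.

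To finish I would convert these two equations into the explicit measure form of the statement. For this I invoke relation \eqref{equal} from the proof of Proposition~\ref{semi}, which yields
\[
   \int_0^s D[F\sp{\,\prime}_{x}(\lambda_0,x_0,t)\,z(\tau)]
   =\int_0^s f\sp{\,\prime}_x(\lambda_0,x_0,\tau)\,z(\tau)\,d\tau
    +\int_0^s g\sp{\,\prime}_x(x_0,\tau)\,z(\tau)\,d h(\tau)
\]
for every $z\in G\OT$. Substituting this identity into both the inhomogeneous and the homogeneous equations above reproduces verbatim the two equations displayed in the theorem, completing the argument.

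I do not anticipate a genuine difficulty: the whole proof is a bookkeeping translation between the generalized-ODE formulation and the measure-differential one. The only point demanding care is confirming that the hypotheses of Theorem~\ref{NC} are truly in force for the specific $F$ of \eqref{F}; but since that confirmation is already contained, step for step, in the proof of Theorem~\ref{T59}, the real content here is simply to cite \eqref{P*P} and \eqref{equal} in the correct order.
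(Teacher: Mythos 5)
Your proposal is correct and follows essentially the same route as the paper: identify the bifurcation point of \eqref{MDE-per} with a bifurcation point of the abstract equation $\Phi(\lambda,x)=x$ (via the identity $\wt{\Phi}=\Phi$), note that the hypotheses of Theorem \ref{NC} were already verified in the proof of Theorem \ref{T59}, and then apply Theorem \ref{NC}, translating its two generalized-ODE equations into the explicit measure form through relation \eqref{equal}. The paper's own proof is just a terser version of this (it leaves the hypothesis check and the final substitution implicit), so your extra detail—including the explicit use of \eqref{P*P} and \eqref{equal}—is a faithful elaboration rather than a different argument.
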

\begin{proof}
Suppose $(\lambda_0,x_0)$ is a~bifurcation point of \eqref{MDE-per}, i.e of the
equation
\[
   \wt{\Phi}(\lambda,x)\,{=}\,x
\]
with $\wt{\Phi}$ given by \eqref{operador1}. Then, by Proposition \ref{Schw},
$(\lambda_0,x_0)$ is also a~bifurcation point of the equation
$\Phi(\lambda,x)\,{=}\,x,$ where $\Phi$ is given by \eqref{operador}. Our
statement follows by Theorem \ref{NC}.
\end{proof}

\begin{example}
Consider the problem \eqref{ex} from Example \ref{e1}, i.e.
\begin{equation}\tag{5.12}
  x(t)=x(1)+\int_0^t f(\lambda,x(s),s)\,d s+\int_0^t g(x(s),s)\,d h(s)
\end{equation}
where  $f(\lambda,x,s)=\lambda\,b(s)\,x+c(s)\,x^2,$ $g(x,s)=x^2,$ \
$h(s)=\chi_{(\tfrac 12,1]}(s)$ and \ $b,c\in L^1[0,1]$ \ and
\ $\displaystyle\int_0^1 b(s)\,d s\ne 0.$
We can verify that $f, g, h$ satisfy the assumptions of Theorem \ref{last}.
Furthermore, we know that $x_0(t)\equiv 0$ is a solution of \eqref{ex} for
all $\lambda\in\Lambda$ and that for any $\delta>0$ sufficiently small we
can find $\lambda\in(-\delta,\delta)$ such that $(\lambda,0)$ is a bifurcation
point of \thetag{5.12} such that $\lambda\in(-\delta,\delta).$ In other words,
it could happen that there is a line segment $J=(-\wt{\delta},\wt{\delta})$
such that any couple $(\lambda,0),$ with $\lambda-n J$ is a bifurcation
point of \thetag{5.12}.

On the other hand, $z'=\lambda\,b(t)\,z,\quad z(0)=z(1)$ is the corresponding
linearized problem at $x_0$ and as $z$ is its solution if and only if $\lambda=0$
and $z$ is constant or $\lambda\ne 0$ and $z\equiv 0,$ Theorem \ref{last} implies
that $(\lambda,0)$ can not be a bifurcation point of \thetag{5.12} whenever
$\lambda\,{\ne}\,0.$ Consequently, $(0,0)$ is the only bifurcation point of
\thetag{5.12}.
\end{example}

For further example the following special case of the result by A. Lomtatidze
(cf. \cite[Theorem 11.1 and Remark 0.5]{AL}) will be useful.

\begin{proposition}\label{Ba}
Let $q:[0,T]\to\R$ be continuous and such that
\begin{gather*}
   \int_0^Tq_-(s)\,d s>0 \quad\mbox{and}\quad \int_0^Tq_+(s)\,d s>0
\\\noalign{\noindent\mbox{where, as usual,}}
   q_+(t):=\max\{q(t),0\} \mbox{ \ and \ } q_-(t):=-\min\{q(t),0\}
   \mbox{ \ for\ } t\in\OT.
\end{gather*}
Further, assume that
\begin{equation}\label{bacho}
  \int_0^Tq_-(s)\,d s<\left(1\,{-}\,\frac{\pi}{2}\int_0^Tq_-(s)\,d s\right)
  \int_0^Tq_+(s)\,d s
\quad\mbox{and \ }
  \int_0^Tq_-(s)\,d s<\dfrac{2}{\pi}.
\end{equation}
Then the equation \ $y''+q(t)\,y=0$ has only trivial $T$-periodic solution.
\end{proposition}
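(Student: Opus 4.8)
The plan is to derive the statement directly from the cited result of Lomtatidze, \cite[Theorem 11.1]{AL} together with \cite[Remark 0.5]{AL}. Put $A:=\int_0^T q_+\,d s$ and $B:=\int_0^T q_-\,d s$, and rewrite $y''+q(t)\,y=0$ in Lomtatidze's form $y''=p(t)\,y$ with $p:=-q$. Since $p=-q$, the positive and negative parts are interchanged, $[p]_+=q_-$ and $[p]_-=q_+$, so that $\int_0^T[p]_+\,d s=B$ and $\int_0^T[p]_-\,d s=A$. By \cite[Remark 0.5]{AL} the homogeneous periodic problem for $y''=p(t)\,y$ has only the trivial solution precisely when the associated nonhomogeneous periodic problem is uniquely solvable, and \cite[Theorem 11.1]{AL} furnishes effective sufficient conditions for the latter in terms of the integrals of $[p]_+$ and $[p]_-$. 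The first step is thus to record this translation and then to check that the hypotheses $A>0$, $B>0$ and \eqref{bacho} reproduce exactly the coupled inequality and the absolute bound $B<\tfrac{2}{\pi}$ required by \cite[Theorem 11.1]{AL}.

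It is worth isolating the elementary half of the argument by hand, as it also clarifies the role of the two conditions in \eqref{bacho}. Suppose $y$ is a nontrivial $T$-periodic solution with $y(t)\ne 0$ for all $t\in\OT$; we may assume $y>0$. Then $w:=y'/y$ is of class $C^1$, is $T$-periodic, and from $y''=-q\,y$ it satisfies the Riccati equation $w'=-q-w^2$. Integrating over $\OT$ and using $w(T)=w(0)$ gives
\[
   \int_0^T q\,d s=-\int_0^T w^2\,d s\le 0,
   \quad\mbox{i.e.}\quad A\le B.
\]
On the other hand, $B<\tfrac{2}{\pi}$ forces $1-\tfrac{\pi}{2}B\in(0,1)$, so the first inequality in \eqref{bacho} yields $B<(1-\tfrac{\pi}{2}B)\,A<A$, a contradiction. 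Hence every nontrivial $T$-periodic solution must possess at least one zero.

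The genuinely delicate case, namely that a nontrivial $T$-periodic solution has zeros, is exactly what the sharp constants $\tfrac{\pi}{2}$ and $\tfrac{2}{\pi}$ in \eqref{bacho} are designed to rule out; this is the content carried by \cite[Theorem 11.1]{AL}, whose proof rests on comparison with trigonometric solutions and Lyapunov-type integral estimates between consecutive zeros. I would not reprove this oscillatory part but simply invoke the quoted theorem once the hypotheses are matched. The main obstacle is therefore concentrated in the bookkeeping of the first step: because Lomtatidze writes the equation as $y''=p\,y$ on a general interval (rather than as $y''+q\,y=0$ on $\OT$), the positive and negative parts are swapped and the interval length must be specialized to $T$; one has to verify with care that, after these substitutions, \eqref{bacho} coincides with the precise inequalities appearing in \cite[Theorem 11.1]{AL}, so that the theorem applies and delivers the asserted uniqueness of the trivial $T$-periodic solution.
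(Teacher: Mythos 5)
Your proposal coincides with the paper's own treatment: the paper gives no proof of Proposition \ref{Ba} at all, introducing it verbatim as "the following special case of the result by A.~Lomtatidze (cf. \cite[Theorem 11.1 and Remark 0.5]{AL})," which is exactly your reduction via $p=-q$ with the swap $[p]_+=q_-$, $[p]_-=q_+$. Your supplementary Riccati computation (showing via $w=y'/y$ and \eqref{bacho} that a nontrivial $T$-periodic solution must vanish somewhere) is correct, but it becomes redundant once the citation is invoked for the full statement, so the two arguments are essentially the same.
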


\medskip

\begin{example}\label{lieb}
By Example (4.2) in \cite{CITZ} (cf. also \cite[Remark 3.1]{CS})
the function
\[
   u(t)=u_0(t)=(2+\cos t)^3
\]
is a solution of the problem
\[
  u''(t)=(6.6-5.7\,\cos t-9\,\cos^2 t)\,u^{1/3}-0.3\,u^{2/3},\,\,
  u(0)=u(2\,\pi),\, u'(0)=u'(2\,\pi)
\]
related to the Liebau valveless pumping phenomena. Since $u_0(\pi)=1,$
\begin{align*}
  &2\,(u_0(\pi))^3-(u_0(\pi))^2-4\,u_0(\pi)+3=0
 \\\noalign{\noindent\mbox{and}}
  &(2+\cos\,t)\,u'_0(t)+3\,(\sin t)\,u_0(t)=0
   \quad\mbox{for all\ } t\in[0,2 \pi],
\end{align*}
$u=u_0$ clearly solves also the parameterized impulsive problem
\begin{equation}\label{liebau}
\left\{\begin{array}{l}
   u''=\lambda\left((2\,{+}\,\cos t)\,u'{+}\,3\,(\sin t)\,u\right)
   +(6.6{-}\,5.7\,\cos t\,{-}\,9\,\cos^2 t)\,u^{1/3}{-}\,0.3\,u^{2/3},
  \\[2mm]
   \Delta^+u(\pi)=2\,(u(\pi))^3-(u(\pi))^2-4\,u(\pi)+3,\quad
    u(0)=u(2 \pi),\, u'(0)=u'(2 \pi)
\end{array}\right.
\end{equation}
for all $\lambda\in\R.$

To prove that the couple $(u_0,0)$ is not a bifurcation point of \eqref{liebau}, we want
to apply Theorem \ref{last}. To this aim, we rewrite the problem \eqref{liebau} as the
integral system
\begin{align}\label{5.20}
   &x(t)=x(2 \pi)+\int_0^t f(\lambda,x(s),s)\, d s + \int_0^t g(x(s),s)\,d h(s),
\\[2mm]\noalign{\noindent\mbox{where}}\nonumber
   x_1=u,\, &x_2=u',\, x=\begin{pmatrix}x_1\\x_2\end{pmatrix},
\\[2mm]\nonumber
   f(\lambda,x,t)&=\begin{pmatrix}
        x_2
      \\[1mm]
\lambda\left((2\,{+}\,\cos t)\,x_2{+}\,3\,(\sin t)\,x_1\right)+R(t)\,x_1^{1/3}{-}\,0.3\,x_1^{2/3},
     \end{pmatrix}
\\[2mm]\nonumber
   g(x,t)&=\begin{pmatrix} 2\,x_1^3-x_1^2-4\,x_1+3 \\[1mm] 0 \end{pmatrix},\quad
    h(t)=\chi_{(\pi,2\pi]}(t)
\\\noalign{\noindent\mbox{and}}\nonumber
   R(t)&=6.6-5.7\,\cos t-9\,\cos^2 t.
\end{align}
Obviously, $x_0=\begin{pmatrix}u_0\\u\sp{\,\prime}_0\end{pmatrix}$ is a solution
to \eqref{5.20} for all $\lambda\in\R.$ Choose $\Omega=(0.5,28)\times(-20,20),$
$\Lambda=(-1,1)$ and $\rho=0.25.$ Then, it is possible to verify that $x(t)\in\Omega$
for all $x\in B(x_0,\rho)$ and we can conclude that $f,\, g$ and $h$ satisfy conditions
\eqref{A1} and \eqref{A3}. Moreover, it is easy to verify that the assumption \eqref{A2i},
\eqref{A2ii}, \eqref{A3}, \eqref{D1i} and \eqref{D1ii} are satisfied, as well.

Next we show that also \eqref{D4} holds. To this aim, consider the expression
\[
 \Delta(t,s,x,y,\lambda_1,\lambda_2)
 :=\int_s^t [f\sp{\,\prime}_x(\lambda_1,x,r)-f\sp{\,\prime}_x(\lambda_2,y,r)]\,d r
 +\int_s^t [g\sp{\,\prime}_x(x,r)-g\sp{\,\prime}_x(y,r)]\,d h(r),
\]
where $0\le s<t\le 2 \pi,$ $x,y\in\Omega,$ and $\lambda_1,\lambda_2\in\Lambda.$
As
\begin{equation}\label{5.21}
\left\{\begin{array}{l}\displaystyle
   f\sp{\,\prime}_x(\lambda,x,t)=
     \begin{pmatrix}
       0 & , & 1
      \\[2mm]]
       \lambda\,3\,\sin\,t{+}\,\tfrac 13\,R(t)\,x_1^{-\tfrac 23}
          \,{-}\,0.2\,x_1^{-\tfrac 13}
       & , &\lambda\,(2+\cos\,t)\,
     \end{pmatrix},
 \\[10mm]\displaystyle\quad
     g\sp{\,\prime}_x(x,t)=
     \begin{pmatrix}6\,x_1^2-2\,x_1-4 & , & 0
       \\[2mm]
                    0 &, & 0
     \end{pmatrix}
\end{array}\right.
\end{equation}
for $\lambda\in\Lambda,$ $x\in\Omega$ and $t\in[0,2 \pi],$ it is not difficult
to justify the inequality
\begin{align*}
  \|\Delta(t,s,x,y,\lambda_1,\lambda_2)\|_{2{\times}2}
  &\le 5\left(|\lambda_1-\lambda_2|+
     |x_1^{-\tfrac 13}-y_1^{-\tfrac 13}|+|x_1^{-\tfrac 23}-y_1^{-\tfrac 23}|\right)(t-s)
\\&\quad
 +(6\,|x^2_1-y^2_1|+2\,|x_1-y_1|)\,(h(t)-h(s))
\end{align*}
for $0\le s<t\le 2 \pi,$ $x,y\in\Omega,$ and $\lambda_1,\lambda_2\in\Lambda.$ Now, having
in mind that the functions $x^2,$ $x^{-\tfrac 13}$ and $x^{-\tfrac 23}$ are uniformly
continuous on $[0.5,28],$ it is already easy to verify that the assumption \eqref{D4} will
be satisfied if we put $\wt{\gamma}(t)=t+h(t).$

Finally, since
\begin{align*}
   \|\int_s^t [f(\lambda_1,x,r)-f(\lambda_2,x,r)]\,d r\|_2
  &\le |\lambda_1-\lambda_2|\, \left[
    3\, (|x_2|+ |x_1|) \right](t-s)
\end{align*}
for $0\le s<t\le 2 \pi,$ $x\in\Omega,$ and $\lambda_1,\lambda_2\in\Lambda,$
we can see that assumption \eqref{A4} will be satisfied with $\gamma(t)=t.$

The linearization of \eqref{5.20} around $(x_0,0)$ is
\begin{equation}\label{linear}
   z(t)=z(2 \pi)+\int_0^t f\sp{\,\prime}_x(0,x_0(t),r)\,z(r)\,d r
          + g\sp{\,\prime}_x(x_0(\pi),\pi)\,z(\pi)\,\chi_{(0,\pi]}(t)
   \quad\mbox{for\ } t\in[0,2 \pi].
\end{equation}
Inserting $\lambda=0$ and $x_0=\begin{pmatrix}u_0\\ u'_0\end{pmatrix}$ into \eqref{5.21},
we get
\begin{align*}
     f\sp{\,\prime}_x(0,x_0(t),t)&=
     \begin{pmatrix}0 & , & 1
        \\[2mm]
           \tfrac 13\,R(t)\,(u_0(t))^{-\tfrac 23}\,{-}\,0.2\,(u_0(t))^{-\tfrac 13}
                & , & 0\hskip1mm
     \end{pmatrix}
   \\&=
     \begin{pmatrix}0 & , & 1
        \\[2mm]\dfrac{3\,(6-7\,\cos t-10\,\cos^2 t)}{10\,(2+\cos t)^2} & , & 0\end{pmatrix}
\\\noalign{\noindent\mbox{and}}
   g\sp{\,\prime}_x(x_0(\pi),\pi)&=
      \begin{pmatrix}6\,(u_0(\pi))^2-2\,u_0(\pi)-4 & , & 0\\[2mm] 0 &, & 0\end{pmatrix}
     =\begin{pmatrix} 0 &, & 0\\[2mm] 0 &, & 0\end{pmatrix}.
\end{align*}
This means that \eqref{linear} reduces to the second order periodic problem
\begin{align}\label{lin}
  z''&=q(t)\,z,\quad z(0)=z(2 \pi),\, z'(0)=z'(2 \pi),
 \\\noalign{\noindent\mbox{where}}\nonumber
  q(t)&=\dfrac{3\,(6-7\,\cos t-10\,\cos^2 t)}{10\,(2+\cos t)^2}
    \quad\mbox{for \ } t\in[0,2 \pi].
\end{align}
One can compute:
\[
   \int_0^{2 \pi}q_-(s)\,d s=
   2\pi-\dfrac{5\,(6+59\,\arctan 1/3)}{5\,\sqrt{3}}\approx 0.513543.
\]
In particular,
\[
  0<1-\dfrac{\pi}2\int_0^{2 \pi}q_-(s)\,d s\approx 0.193328.
\]
Furthermore,
\begin{align*}
  &\int_0^{2 \pi}q_+(s)\,d s=
   \dfrac 1{15}\,\left((59\,\sqrt{3}-60)\,\pi-2\sqrt{3}\,(6+\arctan 1/3)\right)
  \approx 3.06682,
\\\noalign{\noindent\mbox{and}}
  &\dfrac2{\pi}\approx 0.63662>\left(1-\frac{\pi}{2}\,\int_0^{2 \pi}q_-(s)\,d s\right)
  \left(\int_0^{2 \pi} q_+(s)\,d s\right)\approx 0.592902
\\
  &\hskip20mm>0.513543\approx\int_0^{2 \pi}q_-(s)\,d s.
\end{align*}
Consequently,  Proposition \ref{Ba} implies that the linear problem \eqref{lin}
possesses only the trivial solution. Thus, by Theorems \ref{T59} and \ref{last}, we
conclude that there is a $\delta>0$ such that $(x,\lambda)$ is not a bifurcation point
of \eqref{liebau}  whenever $|\lambda|+\|x-x_0\|_\infty<\delta.$ In particular,
the couple $(x_0,0)$ can not be a bifurcation point of \eqref{liebau}.

Note that the validity of the assumptions of Theorems \ref{T59} and \ref{last} for
the model worked out in this example can also be verified using Corollary 2.1 in \cite{HaTo}.

Some computations in this example were made with the help of the software system
Mathematica.
\end{example}

\section*{Acknowledgements}
We would like to thank to Ji\v{r}\'{\i} \v{S}remr and Robert Hakl for their valuable
help with Example \ref{lieb}.

\smallskip

C.~Mesquita was supported by CAPES n. 88881.187 960/2018-01 and by the Institutional
Research Plan RVO 6798584 of the Czech Academy of Sciences.

M.~Tvrd\'y was supported the Institutional Research Plan RVO 6798584 of the Czech
Academy of Sciences.

\end{document}